\newtheorem{theorem}             {Theorem}  [section]
\newtheorem{definition} [theorem] {Definition}
\newtheorem{lemma}      [theorem]{Lemma}
\newtheorem{corollary}  [theorem]{Corollary}
\newtheorem{proposition}[theorem]{Proposition}
\newtheorem{remark} [theorem] {Remark}
\numberwithin{equation}{section} \everymath{\displaystyle}
\newcommand{\Cont}{{\rm C}}
\newcommand{\Aut}{\mathcal{A}}
\newcommand{\Sch}{\mathcal{S}}
\newcommand{\intL}{{\rm L}}
\newcommand{\Ht}{{\rm Ht}}
\newcommand{\Nr}{{\rm Nr}}
\newcommand{\Tr}{{\rm Tr}}
\newcommand{\gp}[1]{\mathbf{#1}}
\newcommand{\GL}{{\rm GL}}
\newcommand{\PGL}{{\rm PGL}}
\newcommand{\SL}{{\rm SL}}
\newcommand{\SO}{{\rm SO}}
\newcommand{\SU}{{\rm SU}}
\newcommand{\id}{\mathbbm{1}}
\newcommand{\ag}[1]{\mathbb{#1}}
\newcommand{\A}{\mathbb{A}}
\newcommand{\C}{\mathbb{C}}
\newcommand{\Q}{\mathbb{Q}}
\newcommand{\R}{\mathbb{R}}
\newcommand{\Z}{\mathbb{Z}}
\newcommand{\Mat}{{\rm M}}
\newcommand{\D}{\mathbf{D}}
\newcommand{\E}{\mathbf{E}}
\newcommand{\F}{\mathbf{F}}
\newcommand{\vo}{\mathfrak{o}}
\newcommand{\vO}{\mathcal{O}}
\newcommand{\vp}{\mathfrak{p}}
\newcommand{\Dis}{{\rm D}}
\newcommand{\norm}[1][\cdot]{\lvert #1 \rvert}
\newcommand{\extnorm}[1]{\left\lvert #1 \right\rvert}
\newcommand{\Norm}[1][\cdot]{\lVert #1 \rVert}
\newcommand{\Pairing}[2]{\langle #1, #2 \rangle}
\newcommand{\Four}[2][]{\mathfrak{F}_{#1} \left( #2 \right)}
\newcommand{\OFour}{\mathfrak{F}}
\newcommand{\rpL}{{\rm L}}
\newcommand{\rpR}{{\rm R}}
\newcommand{\Res}{{\rm Res}}
\newcommand{\Ind}{{\rm Ind}}
\newcommand{\Bas}{\mathcal{B}}
\newcommand{\Intw}{\mathcal{M}}
\newcommand{\Whi}{\mathcal{W}}
\newcommand{\Cond}{\mathbf{C}}
\newcommand{\fin}{{\rm fin}}
\newcommand{\eis}{{\rm E}}
\newcommand{\eisCst}{{\rm E}_{\gp{N}}}
\newcommand{\Reis}{\mathcal{E}}
\newcommand{\reg}{{\rm reg}}
\newcommand{\freg}{{\rm fr}}
\newcommand{\Ex}{\mathcal{E}{\rm x}}
\newcommand{\Zeta}{\mathrm{Z}}
\newcommand{\Shoulder}[3]{ #1 \left( #2 ; #3 \right) }
\newcommand{\Tree}[4]{ #1 \left( \frac{ #3 ; #4 }{ #2 } \right)}
\newcommand{\Sq}[5]{ #1 \left( \frac{ #4 ; #5 }{ #2 } \middle| #3 \right)}
\newcommand{\Vsum}{\sideset{}{^*} \sum}
\newcommand{\Vol}{{\rm Vol}}
\newcommand{\Rmnum}[1]{\expandafter\@slowromancap\romannumeral #1@}
\title{On Motohashi's Formula}
\author{Han Wu}
\begin{document}

\begin{abstract}
	We complement and offer a new perspective of the proof of a Motohashi-type formula relating the fourth moment of $L$-functions for $\GL_1$ with the third moment of $L$-functions for $\GL_2$ over number fields, studied earlier by Michel-Venkatesh and Nelson. Our main tool is a new type of pre-trace formula with test functions on $\Mat_2(\A)$ instead of $\GL_2(\A)$, on whose spectral side the matrix coefficients are replaced by the standard Godement-Jacquet zeta integrals. This is also a generalization of Bruggeman-Motohashi's other proof of Motohashi's formula. We give a variation of our method in the case of division quaternion algebras instead of $\Mat_2$, yielding a new spectral reciprocity, for which we are not sure if it is within the period formalism given by Michel-Venkatesh. We also indicate a further possible generalization, which seems to be beyond what the period method can offer.
\end{abstract}

	\maketitle
	
	\tableofcontents

\section{Introduction}

	\subsection{History and New Perspective}

	In a series work (one collaborated with Iv\'ic) culminating in \cite{Mo93}, Motohashi established an explicit formula relating the fourth moment of Riemann zeta function with the cubic moment of $L$-functions related to modular forms (holomorphic and Maass forms and Eisenstein series) for the full modular group $\SL_2(\Z)$. In particular, the transform formula from the weight function on the fourth moment side to the weight function on the cubic moment side is described with explicit formula. A further extension to $\Q(i)$ can be found in \cite{BrM02}. In terms of automorphic representation theoretic language, this is a relation between the fourth moment of $L$-functions for $\GL_1$ and the third moment of $L$-functions for $\GL_2$ over $\Q$, hence has natural generalization in the level aspect. 
	
	Conrey-Iwaniec \cite{CI00} noticed that the continuous part of the cubic moment side becomes a sixth moment of $\GL_1$ $L$-functions. They studied Motohashi's formula in the level aspect for the inverse direction of the weight transform formula. They partially succeeded in doing this, and obtained a Weyl-type subconvexity in the level aspect for quadratic Dirichlet characters. This approach was recently extended by Petrow-Young \cite{PY19_CF} to cube-free level Dirchlet characters in the hybrid aspect, then to all Dirichlet characters in \cite{PY19_All}. Partial inversion in the archimedean aspect for Motohashi's original formula is also known for Maass forms by Ivi\'c \cite{Iv01}, and more recently for holomorphic forms by Frolenkov \cite{Fr20}, which also imply the relevant Weyl-type subconvexity. However, in these great achievements, an explicit transform formula of the weight functions in the inverse direction remains mysterious.
	
	In \cite[4.3.3]{MV06} and \cite[\S 4.5.3]{MV10} Michel-Venkatesh proposed a sketch of period approach to the Motohashi formula relating the fourth moment of the Riemann zeta function and some cubic moment of $L$-functions of automorphic forms for $\GL_2$, which exploits the Gan-Gross-Prasad conjecture (GGP) via two different paths of automorphic restriction of (a product of) Eisenstein series as follows
\begin{equation}
\begin{matrix}
	& & \GL_2 \times \GL_2 & & \\
	& \nearrow & & \nwarrow & \\
	\GL_1 \times \GL_1 & & & & \GL_2 \\
	& \nwarrow & & \nearrow & \\
	& & \GL_1 & &
\end{matrix}.
\label{MVGraph}
\end{equation}
	This \emph{Michel-Venkatesh sketch} is not a proof of a Motohashi-type formula because of serious convergence issue. Recently, Nelson \cite{Ne20} announced a solution to the convergence issue, a version of the transform formula of weight functions in the inverse direction which generalizes Conrey-Iwaniec's result over general number fields. In the current version, his inversion formula seems to work only for a special type of test functions, which is insufficient for our purpose in the subsequent work \cite{BFW21}.
	
	Inserting Godement's construction of Eisenstein series, we discover a new perspective of the Michel-Venkatesh and Nelson's (incomplete) treatment. Precisely, we realize Motohashi's formula as the equality of two different decompositions of a tempered distribution $\Theta(\lambda, \cdot)$ on $\Sch(\Mat_2(\A))$ (see Theorem \ref{Main}). This distribution satisfies a certain co-variance property under the action of $H(\A) = \GL_1(\A)^3$, offered by the following companion graph of (\ref{MVGraph})
\begin{equation}
\begin{matrix} 
	\GL_2 > \GL_1 & & \GL_1 < \GL_2 \\ 
	& {}^{\circlearrowleft} \Mat_2 ^{\circlearrowright} & \\
	& \circlearrowright & \\
	& \GL_1 & 
\end{matrix},
\label{CompGraph}
\end{equation}
	where the two actions of $\GL_2$ on $\Mat_2$ are the natural actions by multiplication and the bottom action of $\GL_1$ is the multiplication by the center of $\GL_2$. In such a way, we make appear the Godement-Jacquet zeta integrals on the cubic moment side to obtain the analytic continuation of all degenerate terms. The class of test functions is the whole $\Sch(\Mat_2(\A))$, and the degenerate terms are explicitly related to the main terms via concise formulas (see Proposition \ref{DegMainRel} below). We do not think that the theory of regularized integrals (as develped in \cite{Za81, MV10, Wu9}) alone can offer such results with the same depth as conveniently as we do here. The full power (Schwartz functions, invariance of distributions, formulas for degenerate terms) of the results obtained in this paper is important for the subsequent joint work of the author with Olga Balkanova and Dmitry Frolenkov \cite{BFW21}, where we generalize Petrow-Young's cube-free Weyl-type subconvex bound \cite{PY19_CF} over totally real number fields, via a proper inverse transform formula of weight functions at the real places. Incidentally, we point out that our method can be viewed as a generalization of Bruggeman-Motohashi's other proof \cite{BrM05} of Motohashi's formula.
	
	We note that a natural variation of our companion graph (\ref{CompGraph}) exists by replacing  $\GL_2$ with $\D^{\times}$, where $\D$ is a division quaternion algebra over $\F$, and $\GL_1 \times \GL_1$ with $\E^{\times}$, where $\E/\F$ is a quadratic field extension contained in $\D$. We thus replace $H$ by the $\F$-group $(\E^{\times} \times \E^{\times})/\F^{\times}$, which acts on $\D^{\times}$ via the following graph
\begin{equation}
\begin{matrix} 
	\D^{\times} > \E^{\times} & & \E^{\times} < \D^{\times} \\ 
	& {}^{\circlearrowleft} \D^{\times}{}^{\circlearrowright} & 
\end{matrix}.
\label{CompGraphVar}
\end{equation}
	Note that the previous case corresponds to taking $\E = \F \oplus \F$ as the split quadratic extension of $\F$. We study a special case where the characters of $H(\A)$ descend from $\Omega^{-1} \times \Omega$ of $\E^{\times}(\A) \times \E^{\times}(\A)$ for a Hecke character of $\E^{\times}(\F) \backslash \E^{\times}(\A) \simeq \E^{\times} \backslash \A_{\E}^{\times}$. Carrying out a similar proof of Theorem \ref{Main}, we obtain a new spectral reciprocity in Theorem \ref{MainVar} as the equality of two tempered distributions on $\Sch(\D(\A))$. We reserve the case for $\D=\Mat_2$ and for the same quadratic field extension $\E \subset \Mat_2$ for future study, since the relevant analysis is more fastidious (but not more complicated than the proof of Theorem \ref{Main}).
	
\begin{remark}
	Although we feel that the graph (\ref{CompGraphVar}) might be the companion graph of
\begin{equation}
\begin{matrix}
	& & \D^{\times} \times \D^{\times} & & \\
	& \nearrow & & \nwarrow & \\
	\E^{\times} \times \E^{\times} & & & & \D^{\times} \\
	& \nwarrow & & \nearrow & \\
	& & \E^{\times} & &
\end{matrix}.
\label{MVGraphVar}
\end{equation}
	as in the previous case, the precise period approach to our spectral reciprocity relation in Theorem \ref{MainVar} remains unclear (to us). The difficulty is to associate an automorphic form $\theta_{\Phi}(\Omega)$ on $\D^{\times}$ with a Hecke character $\Omega$ of $\E$ and a Schwartz function $\Phi \in \Sch(\A_{\E})$ in a canonical way. Some ideas in two special cases are as follows:
\begin{itemize} 
	\item[(1)] In the case $\Omega \mid_{\A^{\times}} = \id$, $\Omega$ can be viewed as an automorphic representation of $\F^{\times} \backslash \E^{\times} \simeq \E^1$, where $\E^1$ is the $\F$-subgroup of elements in $\E^{\times}$ with norm $1$. Then we have the theta lift $\Theta_1(\Omega)$ to the metaplectic group $\gp{Mp}$ via the Weil representation $r_1$ of $\E^1 \times \gp{Mp}$. We also have the theta lift $\Theta_2(\sigma)$ for any cuspidal automorphic representation $\sigma$ of $\gp{Mp}$ to any quaternionic group $\D^{\times}$ via the Weil representation $r_2$ of $\gp{PD}^{\times} \times \gp{Mp}$. One may expect that $\Theta_2(\Theta_1(\Omega))$ is well-defined, independent of the additive characters chosen in the above two Weil representations, and is the cuspidal representation containing $\theta_{\Phi}(\Omega)$.
	\item[(2)] In the case $\D = \Mat_2$, the automorphic representation containing $\theta_{\Phi}(\Omega)$ should be the automorphic induction of $\Omega$ to $\GL_2$.
\end{itemize}
	 However we do not know what role should $\Phi$ play in either case.
\end{remark}
		
	Since our proof relies on a new type of pre-trace formulas (see Theorem \ref{GJPreTrace} and Theorem \ref{GJPreTraceVar}), whose spectral side has the Godement-Jacquet zeta functions instead of the usual matrix coefficients, we point out that a further possible application of Theorem \ref{GJPreTrace} is to combine it with Jacquet-Zagier's approach of the trace formula for $\GL_2$ (see \cite{JZ87, Wu9}), to study the mixed moment of $L(1/2,\pi)L(1/2,\pi,\mathrm{Ad})$. This possibility does not seem to be within the scope of Michel-Venkatesh's period formalism. Such moment was recently studied in \cite{BBFR19, BBFR20}. It should be possible to generalize these results over general number fields in the light of this possible application of Theorem \ref{GJPreTrace}. We also reserve it for another paper.

	\subsection{Notations and Conventions}
	
	Throughout the paper, $\F$ is a (fixed) number field with ring of integers $\vo$. $V_{\F}$ denotes the set of places of $\F$. For any $v \in V_{\F}$ resp. $\vp \in V_{\F}, \vp < \infty$, $\F_v$ resp. $\vo_{\vp}$ is the completion of $\F$ resp. $\vo$ with respect to the absolute value $\norm_v$ corresponding to $v$ resp. $v=\vp$. $\ag{A} = \ag{A}_{\F}$ is the ring of adeles of $\F$, while $\ag{A}^{\times}$ denotes the group of ideles. We fix a section $s_{\F}$ of the adelic norm map $\norm_{\ag{A}}: \ag{A}^{\times} \to \ag{R}_+$, identifying $\R_+$ as a subgroup of $\A^{\times}$. Hence any character of $\R_+ \F^{\times} \backslash \A^{\times}$ is identified with a character of $\F^{\times} \backslash \A^{\times}$. We put the standard Tamagawa measure $dx = \sideset{}{_v} \prod dx_v$ on $\ag{A}$ resp. $d^{\times}x = \sideset{}{_v} \prod d^{\times}x_v$ on $\ag{A}^{\times}$. We recall their constructions. Let $\Tr = \Tr_{\ag{Q}}^{\F}$ be the trace map, extended to $\ag{A} \to \ag{A}_{\ag{Q}}$. Let $\psi_{\ag{Q}}$ be the additive character of $\ag{A}_{\ag{Q}}$ trivial on $\ag{Q}$, restricting to the infinite place as
	$$ \ag{Q}_{\infty} = \ag{R} \to \ag{C}^{(1)}, x \mapsto e^{2\pi i x}. $$
	We put $\psi = \psi_{\ag{Q}} \circ \Tr$, which decomposes as $\psi(x) = \sideset{}{_v} \prod \psi_v(x_v)$ for $x=(x_v)_v \in \ag{A}$. $dx_v$ is the additive Haar measure on $\F_v$, self-dual with respect to $\psi_v$. Precisely, if $\F_v = \ag{R}$, then $dx_v$ is the usual Lebesgue measure on $\ag{R}$; if $\F_v = \ag{C}$, then $dx_v$ is twice the usual Lebesgue measure on $\ag{C} \simeq \ag{R}^2$; if $v = \vp < \infty$ such that $\vo_{\vp}$ is the valuation ring of $\F_{\vp}$, then $dx_{\vp}$ gives $\vo_{\vp}$ the mass $\Dis_{\vp}^{-1/2}$, where $\Dis_{\vp} = \Dis(\F_{\vp})$ is the local component at $\vp$ of the discriminant $\Dis(\F)$ of $\F/\ag{Q}$ such that $\Dis(\F) = \sideset{}{_{\vp < \infty}} \prod \Dis_{\vp}$. Consequently, the quotient space $\F \backslash \ag{A}$ with the above measure quotient by the discrete measure on $\F$ admits the total mass $1$ \cite[Ch.\Rmnum{14} Prop.7]{Lan03}. Recall the local zeta-functions: if $\F_v = \ag{R}$, then $\zeta_v(s) = \Gamma_{\ag{R}}(s) = \pi^{-s/2} \Gamma(s/2)$; if $\F_v = \ag{C}$, then $\zeta_v(s) = \Gamma_{\ag{C}}(s) = (2\pi)^{1-s} \Gamma(s)$; if $v=\vp < \infty$ then $\zeta_{\vp}(s) = (1-q_{\vp}^{-s})^{-1}$, where $q_{\vp} := \Nr(\vp)$ is the cardinality of $\vo/\vp$. We then define
	$$ d^{\times} x_v := \zeta_v(1) \frac{dx_v}{\norm[x]_v}. $$
	In particular, $\Vol(\vo_{\vp}^{\times}, d^{\times}x_{\vp}) = \Vol(\vo_{\vp}, dx_{\vp})$ for $\vp < \infty$. Their product gives a measure of $\A^{\times}$. Equip $s_{\F}(\R_+)$ with the measure $dt/t$ on $\R_+$, where $dt$ is the (restriction of the) usual Lebesgue measure on $\R$, and $\F^{\times}$ with the counting measure. Then we have (see \cite[Ch.\Rmnum{14} Prop.13]{Lan03})
	$$ \Vol(\R_+ \F^{\times} \backslash \A^{\times}) = \zeta_{\F}^*, $$
	where $\zeta_{\F}^*$ is the residue at $1$ of the Dedekind zeta function $\zeta_{\F}(s)$.
	
	For $\vp < \infty$, let $\Sch(\F_{\vp}) = \Cont_c^{\infty}(\F_{\vp})$. We call $\Sch(\A) = \otimes_v' \Sch(\F_v)$ the space of Schwartz functions over $\A$. Let $\chi \in \widehat{\R_+ \F^{\times} \backslash \A^{\times}}$ viewed as a unitary Hecke character of $\F^{\times} \backslash \A^{\times}$, and let $f \in \Sch(\A)$. Tate's global zeta function is defined by
\begin{equation} 
	\Zeta(s,\chi,f) := \int_{\A^{\times}} f(x) \chi(x) \norm[x]_{\A}^s d^{\times}x. 
\label{TateInt}
\end{equation}
	These are integral representations of the complete $L$-function $\Lambda(s,\chi)$. They are absolutely convergent for $\Re s > 1$ with meromorphic continuation to $s \in \C$ and rapid decay for $\Cond(\chi \norm_{\A}^s) \to \infty$, satisfy the global functional equation
	$$ \Zeta(s,\chi,f) = \Zeta(1-s,\chi^{-1}, \OFour f) \quad \text{with} \quad \OFour f(x) := \int_{\A} f(y) \psi(-xy) dy, $$
	and admit possible simple poles at $s \in \{ 0,1 \}$ for $\chi = \mathbbm{1}$ with residues
\begin{equation} 
	\Res_{s=1} \Zeta(s,\mathbbm{1},f) = \zeta_{\F}^* \int_{\A} f(x) dx, \quad \Res_{s=0} \Zeta(s, \mathbbm{1}, f) = -\zeta_{\F}^* f(0). 
\label{TateIntRes}
\end{equation}
	
	For $R \in \left\{ \F_v \ \middle| \ v \in V_{\F} \right\} \cup \{ \A \}$, we define the following subgroups of $\GL_2(R)$
	$$ \gp{Z}(R) = \left\{ z(u) := \begin{pmatrix} u & 0 \\ 0 & u \end{pmatrix} \ \middle| \ u \in R^{\times} \right\}, \quad \gp{N}(R) = \left\{ n(x) := \begin{pmatrix} 1 & x \\ 0 & 1 \end{pmatrix} \ \middle| \ x \in R \right\}, $$
	$$ \gp{A}(R) = \left\{ a(y) := \begin{pmatrix} y & 0 \\ 0 & 1 \end{pmatrix} \ \middle| \ y \in R^{\times} \right\}, \quad \gp{A}(R)\gp{Z}(R) = \left\{ d(t_1,t_2) := \begin{pmatrix} t_1 & \\ & t_2 \end{pmatrix} \ \middle| \ t_1,t_2 \in R^{\times} \right\}, $$
and equip them with the Haar measures on $R^{\times}, R, R^{\times}, R^{\times} \times R^{\times}$ respectively. The long Weyl element $w$ in $\GL_2(R)$ is specified as 
\begin{equation}
	w := \begin{pmatrix} 0 & 1 \\ -1 & 0 \end{pmatrix}.
\label{LongWeyl}
\end{equation}
	The product $\gp{B} := \gp{Z} \gp{N} \gp{A}$ is a Borel subgroup of $\GL_2$. We pick the standard maximal compact subgroup $\gp{K} = \sideset{}{_v} \prod \gp{K}_v$ of $\GL_2(\ag{A})$ by defining
	$$ \gp{K}_v = \left\{ \begin{matrix} \SO_2(\ag{R}) & \text{if } \F_v = \ag{R} \\ \SU_2(\ag{C}) & \text{if } \F_v = \ag{C} \\ \GL_2(\vo_{\vp}) & \text{if } v = \vp < \infty \end{matrix} \right. , $$
and equip it with the Haar probability measure $d\kappa_v$. At every place $v$, we define a \emph{height function} 
	$$ \Ht_v: \gp{Z}(\F_v) \backslash \GL_2(\F_v) / \gp{K}_v \to \R_{>0}, \quad \begin{pmatrix} t_1 & x \\ 0 & t_2 \end{pmatrix} \kappa \mapsto \extnorm{\frac{t_1}{t_2}}_v. $$
	Their tensor product $\Ht(g) := \sideset{}{_v} \prod \Ht_v(g_v)$ for $g = (g_v)_v \in \GL_2(\A)$ is the height function on $\GL_2(\A)$.
	
	$\Mat_2(\A)$ admits an action of $\GL_2(\A) \times \GL_2(\A)$ which induces an action on $\Sch(\Mat_2(\A))$ by
	$$ \rpL_{g_1} \rpR_{g_2} \Psi(x) := \Psi \left( g_1^{-1} x g_2 \right), \quad \forall g_1,g_2 \in \GL_2(\A), \Psi \in \Sch(\Mat_2(\A)). $$
	This is a smooth Fr\'echet-representation.

	\subsection{Main Result}
	\label{MR}
	
	Let $R \in \left\{ \F_v \ \middle| \ v \in V_{\F} \right\} \cup \{ \A \}$. Consider the (right) action of $H(R) = R^{\times} \times R^{\times} \times R^{\times}$ on $\Mat_2(R)$ given for $x \in \Mat_2(R)$ and $t_j,z \in R^{\times}$ by
	$$ x^{(t_1,t_2,z)} := a(t_1)^{-1} x d(t_2z,z). $$

\begin{definition}
	A \emph{Motohashi distribution} is a tempered distribution $\Theta$ on $\Mat_2(R)$ satisfying the co-invariance property under the above action of $H(R) = (R^{\times})^3$
	$$ \Theta(h.\Psi) = \lambda(h) \Theta(\Psi), \quad \forall \Psi \in \Sch(\Mat_2(R)) \ \& \ h.\Psi(x) := \Psi(x^h), $$
	where $\lambda$ is a quasi-character of $H(R)$, called the \emph{parameter} of the distribution $\Theta$. According as $R = \A$ or $\F_v$, we say $\Theta$ is \emph{global} or \emph{local}. For a global Motohashi distribution, we require its parameter $\lambda$ to be \emph{automorphic}, i.e., to be a quasi-character of $H(\F) \backslash H(\A) \simeq (\F^{\times} \backslash \A^{\times})^3$.
\label{MotoDis}
\end{definition}

	The set of parameters $\lambda$ for Motohashi distributions (global or local) has the structure of a (trivial) complex vector bundle over the discrete set of a compact abelian group, with bundles isomorphic to $\C^3$. In the global case, we shall \emph{fix} three unitary characters $\chi_1, \chi_2, \omega$ of $\R_+ \F^{\times} \backslash \A^{\times}$, the relevant bundle at which is identified with $\C^3$ via a shift
	$$ \lambda = (s_1,s_2,s_0) \in \C^3 \quad \leftrightarrow \quad (t_1, t_2, z) \in (\A^{\times})^3 \mapsto \chi_1(t_1) \chi_2(t_2) \omega(z) \cdot \norm[t_1]_{\A}^{s_1-1} \norm[t_2]_{\A}^{s_2+1} \norm[z]_{\A}^{s_0+2}. $$


	To any $\Psi \in \Sch(\Mat_2(\A))$, we associate a kernel function (regarded as a tempered distribution)
	$$ \Shoulder{KK}{x}{y} := \sideset{}{_{0 \neq \xi \in \Mat_2(\F)}} \sum \Psi(x^{-1} \xi y), \qquad x,y \in \GL_2(\A). $$
	It is a smooth function on $\GL_2(\F) \backslash \GL_2(\A)$. Hence we can define
\begin{equation} 
\begin{matrix}
	\Shoulder{NK}{x}{y} := \int_{\F \backslash \A} \Shoulder{KK}{\begin{pmatrix} 1 & u \\ 0 & 1 \end{pmatrix}x}{y} du, \\
	\Shoulder{KN}{x}{y} := \int_{\F \backslash \A} \Shoulder{KK}{x}{\begin{pmatrix} 1 & u \\ 0 & 1 \end{pmatrix}y} du, \\
	\Shoulder{NN}{x}{y} := \int_{\F \backslash \A} \Shoulder{KN}{\begin{pmatrix} 1 & u \\ 0 & 1 \end{pmatrix}x}{y} du.
\end{matrix}
\label{PartialCstT}
\end{equation}
	From these functions, we form a smooth function on $\gp{B}(\F) \backslash \GL_2(\A) \times \gp{B}(\F) \backslash \GL_2(\A)$
\begin{equation} 
	\Shoulder{\Delta \Delta}{x}{y} := \Shoulder{KK}{x}{y} - \Shoulder{NK}{x}{y} - \Shoulder{KN}{x}{y} + \Shoulder{NN}{x}{y}. 
\label{KerToBeInt}
\end{equation}
	Note that we have chosen our notation so that ``$K$'' resp. ``$N$'' resp. ``$\Delta$'' stands for ``kernel'' resp. ``constant term'' resp. ``difference (non constant term)'', and have omitted the dependence on $\Psi$ for simplicity for this set of notations.
	
	The global Motohashi distribution we are interested in is
\begin{equation} 
	\Theta(\lambda, \Psi) := \int_{ ( \F^{\times} \backslash \A^{\times} )^3} \Shoulder{\Delta \Delta}{a(t_1)  }{d(t_2 z,z)} \omega(z) \norm[z]_{\A}^{s_0+2} \chi_1(t_1) \norm[t_1]_{\A}^{s_1-1} \chi_2(t_2) \norm[t_2]_{\A}^{s_2+1} d^{\times}z d^{\times}t_1 d^{\times}t_2,
\label{MainMD}
\end{equation}
	whose fundamental properties are summarized in the following proposition.
\begin{proposition}
	(1) The integral defining $\Theta(\lambda, \Psi)$ is absolutely convergent in
\begin{equation} 
	\lambda \in D := \left\{ (s_1, s_2, s_0) \in \C^3 \ \middle| \ \Re s_0 > 2, \  \Re s_1 - 1 - \frac{\Re s_0+1}{2} \ \& \ \Re s_2 + 1  - \frac{\Re s_0+1}{2} > 1 \right\}.
\label{InitialD}
\end{equation}
	(2) It has a meromorphic continuation to $\C^3$.
\label{GoalMotoDis}
\end{proposition}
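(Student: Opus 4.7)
The plan is to exploit the Bruhat decomposition of $\Mat_2(\F)^{*}$ to separate $\Shoulder{\Delta \Delta}{a(t_1)}{d(t_2z,z)}$ into a ``regular'' (big-cell) contribution carrying Schwartz decay, and a collection of ``degenerate'' (small-cell and rank-one orbit) contributions that, after unfolding the unipotent integrals in \eqref{PartialCstT}, turn into Tate-type zeta integrals in the toral variables. The first part yields absolute convergence in $D$ directly; the second, once combined with the inclusion-exclusion pattern $KK - NK - KN + NN$, provides the meromorphic continuation via the known analytic properties of global zeta integrals.

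For (1), I would first decompose the sum over $\xi \in \Mat_2(\F)^{*}$ in $\Shoulder{KK}{a(t_1)}{d(t_2z,z)}$ according to the $\gp{N}(\F) \times \gp{N}(\F)$-orbits on $\Mat_2(\F)$. The four kernels in \eqref{PartialCstT} correspond precisely to inclusion-exclusion with respect to whether the two bottom entries $\xi_{21}, \xi_{22}$ are required to be nonzero, so $\Shoulder{\Delta \Delta}$ is supported on matrices $\xi$ with both lower-row entries in $\F^{\times}$. Writing such $\xi$ as $n(x) w\, d(\alpha,\beta)\, n(y)$ with $\alpha, \beta \in \F^{\times}$ and $x,y \in \F$, the four unipotent integrals unfold into honest integrals over $\A$ in $x$ and $y$, producing a partial Fourier transform of $\Psi$ in two matrix entries. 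Standard Schwartz decay of this transform in the remaining entries of $a(t_1)^{-1} \xi\, d(t_2z,z)$, combined with an iterated Tate-like estimate against $\chi_1(t_1)\norm[t_1]_{\A}^{s_1-1}$, $\chi_2(t_2)\norm[t_2]_{\A}^{s_2+1}$, $\omega(z)\norm[z]_{\A}^{s_0+2}$, then yields absolute convergence in precisely the domain \eqref{InitialD}; the asymmetric shifts $-1$ and $+1$ in the exponents of $t_1, t_2$ reflect the asymmetric way in which $a(t_1)$ (acting on the left) and $d(t_2z,z)$ (acting on the right) dilate the rows and columns of $\xi$.

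For (2), meromorphic continuation, I would \emph{not} discard the degenerate pieces but rather re-express each of the four kernels $KK, NK, KN, NN$, after the $(\F^{\times} \backslash \A^{\times})^3$ integration, as finite linear combinations of global Tate zeta integrals \eqref{TateInt} (for the rank-one and purely-triangular $\gp{N} \times \gp{N}$-orbits) and Godement--Jacquet zeta integrals (for the $\GL_2(\F)$ orbit), taken against suitable Schwartz functions built from $\Psi$ by partial Fourier transforms in appropriate matrix coordinates. Each such zeta integral admits meromorphic continuation to $\lambda \in \C^3$ with explicit polar loci determined by $(s_1,s_2,s_0)$ and by the central characters of $\chi_1, \chi_2, \omega$ (cf.\ \eqref{TateIntRes}), so $\Theta(\lambda, \Psi)$ inherits a meromorphic continuation to all of $\C^3$.

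The main obstacle is the analytic bookkeeping in step (2): each individual unfolding of $KK, NK, KN, NN$ is absolutely convergent only in a narrower range than $D$, or else requires a separate regularization in the style of \cite{Za81, MV10, Wu9}. I would therefore first shift $\lambda$ into a common region where all four unfoldings converge individually, assemble the four meromorphic continuations, and then verify that the spurious poles contributed by the degenerate pieces cancel across the inclusion-exclusion, leaving only the polar behavior intrinsic to $\Theta(\lambda, \Psi)$. Enumerating the $\gp{N}(\F) \times \gp{N}(\F)$-orbits on $\Mat_2(\F)^{*}$, and carefully tracking the characters $\chi_1, \chi_2, \omega$ through the partial Fourier transforms that appear in the Schwartz arguments of the resulting zeta integrals, is where the real computational work lies.
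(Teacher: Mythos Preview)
Your proposal has a real gap in part (1) and a partial match in part (2).

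For (1), the claim that ``$\Shoulder{\Delta\Delta}{\cdot}{\cdot}$ is supported on matrices $\xi$ with both lower-row entries in $\F^{\times}$'' is not correct: the constant-term maps in \eqref{PartialCstT} are unipotent averages, not restrictions on the summation set, and after Poisson summation the inclusion--exclusion produces sums of \emph{partial Fourier transforms} of $\Psi$ over $\F^{\times}$-lattice points in the dual variables (see the paper's Lemma~\ref{DDDecomp}), not a single big-cell sum. More importantly, a direct geometric/Bruhat analysis of this kind yields absolute convergence only in the region $D'$ of Lemma~\ref{GeomConv}, which is \emph{not} the region $D$ in \eqref{InitialD}. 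The paper obtains the specific domain $D$ by a different route: it establishes a Godement--Jacquet pre-trace formula (Theorem~\ref{GJPreTrace}) expanding $\Tree{\widetilde{KK}^{(2)}}{\omega,s_0}{x}{y}$ spectrally, with Godement--Jacquet zeta integrals $\Zeta\!\left(\tfrac{s_0+1}{2},\Psi,\beta\right)$ as coefficients; the Whittaker-expanded variant (\`a la Theorem~\ref{AutoFourInvV}) then gives \eqref{SpecSide} with absolute convergence precisely in $D$. The conditions defining $D$ are the natural convergence ranges for these Godement--Jacquet and Hecke--Jacquet--Langlands zeta integrals, not for Tate integrals coming from orbit unfolding.

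For (2), your idea of rewriting everything via Tate-type integrals after Poisson summation is essentially the paper's \emph{geometric} continuation (\S\ref{4thMSec} and Proposition~\ref{GeomSideAC}): one decomposes $\Shoulder{\Delta\Delta}{\id}{\id}$ into four pieces $\Delta\Delta_i$ (Lemma~\ref{DDDecomp}), each yielding multidimensional Tate integrals with known meromorphic continuation. But the paper also gives a second, spectral continuation (Proposition~\ref{SpecSideAC}) by contour-shifting in the $s$-variable of the continuous spectrum in \eqref{SpecSide}; crossing the poles of $M_3(\lambda,\Psi\mid\chi,s)$ classified in Corollary~\ref{ClassPoles} produces the degenerate terms $DS_j$. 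The equality of these two continuations at $\lambda=\vec{0}$ \emph{is} the Motohashi formula (Theorem~\ref{Main}), so your single-method plan would recover only half of the structure.
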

\begin{remark}
	Throughout the paper, we hide the dependence on $\chi_1,\chi_2,\omega$ in the notation for simplicity.
\end{remark}

	We will establish Proposition \ref{GoalMotoDis} by two different methods. One method (Proposition \ref{SpecSideAC}) uses the spectral theory for $\GL_2$ while the other (Proposition \ref{GeomSideAC}) uses (partial) Poisson summation formula over $\Mat_2(\A)$. They allow us to write $\Theta(\lambda, \Psi)$ for $\lambda$ near $\vec{0}$ as
	$$ M_3(\lambda, \Psi) + \sideset{}{_{j=0}^3} \sum DS_j(\lambda, \Psi), \quad \text{resp.} \quad M_4(\lambda, \Psi) + \sideset{}{_{j=1}^8} \sum DG_j(\lambda, \Psi) - \sideset{}{_{j=4}^5} \sum DS_j(\lambda, \Psi), $$
where $M_3(\vec{0}, \cdot)$ resp. $M_4(\vec{0}, \cdot)$ is a Motohashi distribution of parameter $\vec{0}$ representing some cubic moment of $\GL_2$ $L$-functions resp. fourth moment of $\GL_1$ $L$-functions (in particular $M_3(\lambda, \Psi)$ resp. $M_4(\lambda, \Psi)$ is regular at $\vec{0}$), while $DS_j(\lambda, \cdot)$ resp. $DG_j(\lambda, \cdot)$ are \emph{degenerate distributions} in the sense that
\begin{itemize}
	\item[(1)] Each of them is supported, up to partial Fourier transforms, in a single $H(\A) \simeq (\A^{\times})^3$-orbit, a \emph{singularity} (in some sense) in the variety of $H(\A)$-orbits on $\Mat_2(\A)$;
	\item[(2)] Each is meromorphic at $\vec{0}$ but not necessarily regular there, and $DS := \sideset{}{_j} \sum DS_j$ resp. $DG := \sideset{}{_j} \sum DG_j$ is regular at $\vec{0}$;
	\item[(3)] $DS_j$ resp. $DG_j$ is expressible in terms of $M_3(\lambda, \Psi \mid \chi, s)$, the projection onto the principal series representation $\pi(\chi \norm_{\A}^s, \omega\chi^{-1}\norm_{\A}^{-s})$ of $M_3(\lambda, \Psi)$ resp. $M_4(\lambda, \Psi \mid \chi, s)$, the projection onto the Hecke character $\chi$ of $M_4(\lambda, \Psi)$.
\end{itemize} 
	Our main result is thus the equality at $\vec{0}$ of the two expressions of meromorphic continuation.

\begin{theorem}
	We get a Motohashi-type formula as the following equality of tempered distributions
	$$ M_3(\vec{0}, \Psi) + DS(\vec{0}, \Psi) = M_4(\vec{0}, \Psi) + DG(\vec{0}, \Psi), \quad \forall \Psi \in \Sch(\Mat_2(\A)). $$
\label{Main}
\end{theorem}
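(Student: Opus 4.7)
The plan is to establish two distinct meromorphic continuations of the distribution $\Theta(\lambda,\Psi)$ defined by (\ref{MainMD}) and then compare their values at $\lambda=\vec{0}$. Proposition \ref{GoalMotoDis} already guarantees such a continuation in the abstract; the content of Theorem \ref{Main} lies in producing two presentations whose individual terms carry independent spectral or geometric meaning.

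For the spectral continuation, I would fix $\lambda$ inside the convergence domain $D$, apply Fubini to view $\Theta(\lambda,\Psi)$ as the integration of the automorphic kernel $\Shoulder{\Delta\Delta}{a(t_{1})}{d(t_{2}z,z)}$ against a $(\chi_{1},\chi_{2},\omega)$-character of $(\F^{\times}\backslash\A^{\times})^{3}$, and then Plancherel-decompose in the two $\gp{B}(\F)\backslash\GL_{2}(\A)$-variables. The cuspidal and unitary-Eisenstein contributions combine into the main term $M_{3}(\lambda,\Psi)$, which at $\vec{0}$ is the cubic-moment distribution of $\GL_{2}$-$L$-functions. Pushing the Eisenstein contour from $D$ down to a neighborhood of $\vec{0}$ picks up finitely many poles of intertwining operators and of Tate $L$-factors; each residue is $H(\A)$-equivariant, supported (up to partial Fourier transform) on a single singular $H(\A)$-orbit, and becomes one of the $DS_{0},\dots,DS_{3}$.

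For the geometric continuation, I would instead unfold the $\Mat_{2}(\F)$-orbital sums that define $KK,NK,KN,NN$ in (\ref{PartialCstT})--(\ref{KerToBeInt}) and Poisson-dualize the row- and column-unipotent integrals. The contribution of the generic (open) $\GL_{2}$-orbit on $\Mat_{2}(\F)\setminus\{0\}$, together with the three character integrations in $t_{1},t_{2},z$, reorganizes into a product of three Tate integrals $\Zeta(s,\cdot,\cdot)$ and yields the main term $M_{4}(\lambda,\Psi)$, a fourth moment of $\GL_{1}$-$L$-functions at $\vec{0}$. The non-generic orbits and the residues of Tate's zeta at $s\in\{0,1\}$ from (\ref{TateIntRes}) supply the eight degenerate distributions $DG_{1},\dots,DG_{8}$ and the two spectral-type corrections $-DS_{4},-DS_{5}$, the latter arising because certain polar contributions are most naturally identified with the spectral projections of $M_{3}$.

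Given both presentations, I equate them as meromorphic functions on $\C^{3}$, transpose $\sum_{j=4}^{5}DS_{j}$ from the geometric side to the spectral side, and specialize at $\lambda=\vec{0}$ using the fact that $M_{3}$, $M_{4}$, $DS:=\sum_{j=0}^{5}DS_{j}$, and $DG:=\sum_{j=1}^{8}DG_{j}$ are each regular there. The main obstacle is not the final equality but the internal bookkeeping of the two expansions: proving that each residue appearing under contour shifts is \emph{rigidly} realized as a projection $M_{3}(\lambda,\Psi\mid\chi,s)$ or $M_{4}(\lambda,\Psi\mid\chi,s)$, as claimed in item (3) of the introduction. This rigidity is what promotes Theorem \ref{Main} from an abstract equality of meromorphic continuations into a concrete Motohashi-type formula, with degenerate terms computable in closed form and amenable to the analysis in \cite{BFW21}.
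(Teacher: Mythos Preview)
Your proposal is correct and follows essentially the same architecture as the paper: two expansions of $\Theta(\lambda,\Psi)$, one spectral via the Godement--Jacquet pre-trace formula (Theorem \ref{GJPreTrace}) yielding $M_3+\sum_{j=0}^{3}DS_j$ after contour shifts (Proposition \ref{SpecSideAC}), and one geometric via the Poisson-summation decomposition of Lemma \ref{DDDecomp} yielding $M_4+\sum_{j=1}^{8}DG_j-DS_4-DS_5$ (Proposition \ref{GeomSideAC}), followed by equating at $\lambda=\vec{0}$. Two small slips to flag: $M_4$ is a \emph{four}-dimensional Tate integral (see (\ref{II_10AC})), not three; and the geometric decomposition is not organized by $\GL_2$-orbits on $\Mat_2$ but by the ad hoc splitting $\Delta\Delta=\sum_{i=1}^{4}\Delta\Delta_i$ of Lemma \ref{DDDecomp}, with $II_3,II_4$ identified as $-DS_4,-DS_5$ through the geometric-residue analysis of \S\ref{GeomAspRes} (Corollary \ref{RResEss}).
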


	We will not give full detail for the analysis of the degenerate terms in all cases. However, motivated by the application in \cite{BFW21}, we will give the answer in the case $\omega=\chi_1=\chi_2=\mathbbm{1}$ and establish
\begin{proposition}
	In the case $\omega = \chi_1 = \chi_2 = \mathbbm{1}$, we have
	$$ DG(\vec{0}, \Psi) = \frac{1}{\zeta_{\F}^*} \left\{ \Res_{s=1} M_4(\vec{0}, \Psi \mid \mathbbm{1},s) - \Res_{s=0} M_4(\vec{0}, \Psi \mid \mathbbm{1},s) \right\}, $$
	$$ DS(\vec{0}, \Psi) = \frac{1}{\zeta_{\F}^*} \Res_{s=\frac{1}{2}} M_3(\vec{0}, \Psi \mid \id, s). $$
\label{DegMainRel}
\end{proposition}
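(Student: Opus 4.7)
The plan is to compute the aggregates $DS(\vec{0}, \Psi)$ and $DG(\vec{0}, \Psi)$ via the two analytic-continuation procedures that produce them, and in each case to identify the value with a residue of the corresponding main-term projection, invoking Tate's residue formula (\ref{TateIntRes}) to translate the factor $\zeta_{\F}^*$.

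\emph{Spectral side.} The spectral decomposition of $M_3(\lambda, \Psi)$ ranges over cuspidal, residual, and continuous spectra of $\GL_2$. For the unramified principal series part, $M_3(\lambda, \Psi \mid \id, s)$ is the weight of a contour integral against the Eisenstein series attached to $\pi(\norm_{\A}^s, \norm_{\A}^{-s})$. In the proof of Proposition \ref{SpecSideAC}, carrying $\lambda$ from the convergence domain $D$ down to $\vec{0}$ forces the $s$-contour to cross the Eisenstein pole at $s=1/2$, whose residue is proportional to the constant function on $[\PGL_2]$, the factor of proportionality containing precisely $\zeta_{\F}^*$ from the volume $\Vol(\R_+ \F^{\times} \backslash \A^{\times}) = \zeta_{\F}^*$. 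The resulting crossed-residue contribution is exactly $DS(\vec{0}, \Psi)$, and dividing by $\zeta_{\F}^*$ yields the asserted identity.

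\emph{Geometric side.} In the proof of Proposition \ref{GeomSideAC}, (partial) Poisson summation over $\Mat_2(\A)$ produces degenerate terms $DG_j(\lambda, \Psi)$ each supported, up to a partial Fourier transform, on a low-rank singular $H(\A)$-orbit in $\Mat_2(\A)$. When $\omega = \chi_1 = \chi_2 = \id$, the resulting integrals factor into Tate global zeta integrals $\Zeta(s, \id, f_\Psi)$ for Schwartz functions $f_\Psi \in \Sch(\A)$ obtained from $\Psi$ by integration along the singular orbit. By (\ref{TateIntRes}), each such factor is meromorphic with simple poles only at $s=0$ and $s=1$, with residues $\zeta_{\F}^* \int_{\A} f_\Psi$ and $-\zeta_{\F}^* f_\Psi(0)$. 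On the other side, $M_4(\vec{0}, \Psi \mid \id, s)$ is itself a Tate-style integral with the same pole structure. Matching the residues of $M_4(\vec{0}, \Psi \mid \id, s)$ at $s=1$ and $s=0$ against the sum of the $DG_j$ (the minus sign in the statement cancelling the negative residue at $s=0$, so that both evaluation-at-zero and integration contributions add up) yields the stated identity for $DG$.

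The main technical obstacle is the bookkeeping: tracking measure normalizations, intertwining-operator constants, and signs carefully enough to pin down the overall factor $\zeta_{\F}^*$ and confirm that no spurious contributions survive. A subsidiary delicate point is that the individual $DS_j$ and $DG_j$ are only meromorphic at $\vec{0}$, while only their sums $DS$ and $DG$ are regular there; the identities should therefore be established at a generic $\lambda$ in a punctured neighborhood of $\vec{0}$ where all constituents are holomorphic, and then specialized by taking $\lambda \to \vec{0}$ along a path avoiding the singular loci of each summand.
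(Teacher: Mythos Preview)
Your proposal misidentifies the mechanism on both sides, and in doing so misses the short argument that the paper uses.

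The key point you overlook is that the $DG_j$ and $DS_j$ are already \emph{defined} as residues of $M_4(\lambda,\Psi\mid\chi,s)$ and $M_3(\lambda,\Psi\mid\chi,s)$ at specific $(\chi,s)$-values: see (\ref{II_11AC})--(\ref{II_24CE}) and the list in \S\ref{DegTerms}. When $\omega=\chi_1=\chi_2=\mathbbm{1}$, all eight $DG_j(\lambda,\Psi)$ become $\pm\tfrac{1}{\zeta_{\F}^*}\Res_{s=1-s_j'}$ or $\pm\tfrac{1}{\zeta_{\F}^*}\Res_{s=-s_j'}$ of the \emph{same} function $M_4(\lambda,\Psi\mid\mathbbm{1},s)$, with the first four clustering near $s=1$ and the last four near $s=0$ as $\lambda\to\vec{0}$. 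Likewise all six $DS_j(\lambda,\Psi)$ become residues of $M_3(\lambda,\Psi\mid\mathbbm{1},s)$ at points clustering near $s=1/2$. So the sum $\sum_{j=1}^4 DG_j(\lambda,\Psi)$ equals $\tfrac{1}{\zeta_{\F}^*}\oint_{|s-1|=\delta} M_4(\lambda,\Psi\mid\mathbbm{1},s)\,\tfrac{ds}{2\pi i}$ for $\|\lambda\|$ small, and this contour integral is manifestly continuous in $\lambda$, giving the residue at $s=1$ when $\lambda=\vec{0}$. The other pieces are handled identically. No Tate-integral bookkeeping, no Poisson summation, no measure normalizations are needed beyond what is already encoded in the definitions.

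Your spectral-side sketch contains a specific confusion: the crossed poles that produce the $DS_j$ in Proposition~\ref{SpecSideAC} are \emph{not} the pole of the Eisenstein series $\eis(s,e)$ at $s=1/2$ (whose residue is the constant function). They are poles of the zeta integrals appearing in $M_3(\lambda,\Psi\mid\chi,s)$, classified in Corollary~\ref{ClassPoles} as types (1)--(3); these come from the Godement--Jacquet and Hecke--Jacquet--Langlands integrals, not from the Eisenstein intertwining. Your proposed source for the factor $\zeta_{\F}^*$ (the volume of $\R_+\F^\times\backslash\A^\times$) is therefore not the right one; the $\zeta_{\F}^*$ is already built into the definitions of the $DS_j$ and $DG_j$. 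Your final paragraph about working at generic $\lambda$ and then specializing is the correct idea, but the contour-integral packaging makes it a two-line argument rather than a bookkeeping exercise.
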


\begin{remark}
	Our method works for all other cases, and is simpler than the original work of Motohashi.
\end{remark}

\begin{remark}
	The formula of $DS(\vec{0}, \Psi)$ is new. In fact, in Motohashi's original work \cite[Theorem 4.2]{Mo97}, a precise explicit formula for this term (the first component of $\mathcal{L}_{2,r}(g)$) is incomplete, and has not so far been completed in the literature.
\end{remark}

	\subsection{Compact Variation}

	We re-interpret our graph of invariance (\ref{CompGraph}) by viewing the split torus $\GL_1 \times \GL_1$ as $\E^{\times}$ for the split algebra $\E \simeq \F \oplus \F$. We regard $\E$ resp. $\E^{\times}$ as $\F$-algebra resp. $\F$-group embedded in the split quaternion algebra $\Mat_2$. Then the group $H$ is canonically isomorphic to $(\E^{\times} \times \E^{\times})/\F^{\times}$, where $\F^{\times}$ is the group of centers of $\GL_2$ diagonally embedded in $\E^{\times} \times \E^{\times}$. 
	
	We consider the case where $\E$ is non-split, and $\Mat_2$ is replaced by a division quaternion algebra $\D$ over $\F$ containing $\E$. The group $H = (\E^{\times} \times \E^{\times})/\F^{\times}$ acts on $\D$ (from right) by
	$$ H \times \D \to \D, \quad (t_1,t_2) \times x \mapsto t_1^{-1}xt_2. $$
	Let $\Omega$ be a (quasi-)character of $\E^{\times} \backslash \A_{\E}^{\times}$, whose restriction to $\F^{\times} \backslash \A^{\times}$ is $\omega$. It defines a (quasi-)character of $H(\F) \backslash H(\A)$ by
\begin{equation} 
	H(\A) \to \C^{\times}, (t_1,t_2) \mapsto \Omega(t_1)^{-1} \Omega(t_2). 
\label{CharH}
\end{equation}
	Form the kernel function
	$$ KK_{\D}(x,y) := \sum_{\xi \in \D^{\times}(\F)} \rpL_x \rpR_y \Psi(\xi), \quad \Psi \in \Sch(\D(\A)). $$
	The tempered distributions we are interested in are
\begin{equation}
	\Theta_{\D}(\Omega, \Psi) := \int_{(\E^{\times} \A^{\times} \backslash \A_{\E}^{\times})^2} \left( \int_{\F^{\times} \backslash \A^{\times}} KK_{\D}(t_1,t_2 z) \omega(z) d^{\times}z  \right) \Omega(t_1^{-1}t_2) d^{\times}t_1 d^{\times}t_2.
\label{MainMDVar}
\end{equation}
	From now on, we fix $\Omega$ a unitary character of $\E^{\times} \R_+ \backslash \A_{\E}^{\times}$ and write
	$$ \Theta_{\D}(s_0, \Psi) := \Theta_{\D}(\Omega \norm_{\A_{\E}}^{s_0+1}, \Psi). $$
	For simplicity, we assume $\Omega \neq \id$ is not trivial.
\begin{proposition}
	The integral defining $\Theta_{\D}(s_0, \Psi)$ is absolutely convergent for $\Re s_0 > 1$, and admits a meromorphic continuation to $s_0 \in \C$.
\label{GoalMotoDisVar}
\end{proposition}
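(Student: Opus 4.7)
My plan is to establish Proposition \ref{GoalMotoDisVar} by paralleling the strategy for Proposition \ref{GoalMotoDis}, exploiting two compactness simplifications available in the division setting: the quotient $\A^{\times} \D^{\times}(\F) \backslash \D^{\times}(\A)$ is compact (since $\D$ is a division algebra), and $\E^{\times} \A^{\times} \backslash \A_{\E}^{\times}$ is compact (since $\E$ is a field). These compactness properties eliminate the constant-term subtractions of \eqref{KerToBeInt}, so no degenerate boundary contributions arise and the only analytical work is a Godement--Jacquet/Poisson-type argument on $\D(\A)$.

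For absolute convergence in $\Re s_0 > 1$, the outer integration over $(\E^{\times} \A^{\times} \backslash \A_{\E}^{\times})^2$ is compact, and after substituting $\Omega \mapsto \Omega \norm_{\A_{\E}}^{s_0+1}$ in \eqref{MainMDVar}, it suffices to bound uniformly
\[
I(s_0, t_1, t_2) := \int_{\F^{\times} \backslash \A^{\times}} KK_{\D}(t_1, t_2 z)\, \omega(z)\, \norm[z]_{\A}^{2(s_0+1)}\, d^{\times} z,
\]
the exponent $2(s_0+1)$ arising from $\norm_{\A_{\E}}|_{\A^{\times}} = \norm_{\A}^2$. Decomposing $\F^{\times} \backslash \A^{\times} \simeq (\F^{\times} \backslash \A^{\times,1}) \times \R_+$ via the adelic norm, the compact direction poses no problem; the non-compact direction parameterised by $u = \norm[z]_{\A} \in \R_+$ is handled as follows. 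For $u \to \infty$ the Schwartz decay of $\Psi$ yields rapid decay of $KK_{\D}$; for $u \to 0$, Poisson summation on the lattice $\D(\F) \subset \D(\A)$ (using $\sum_{\D^{\times}(\F)} = \sum_{\D(\F)} - \Psi(0)$ since $\D$ is division) expands
\[
KK_{\D}(t_1, t_2 z) = u^{-4}\, C(t_1, t_2)\, \widehat{\Psi}(0) - \Psi(0) + O_N(u^N) \qquad (N \to \infty),
\]
with $C(t_1, t_2) = \norm[\Nr t_1]_{\A}^{2} \norm[\Nr t_2]_{\A}^{-2}$ bounded on the compact $(t_1, t_2)$-domain. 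The critical integrand near $u = 0$ then behaves as $u^{2(s_0+1) - 4}\, du/u = u^{2s_0 - 3}\, du$, integrable precisely when $\Re s_0 > 1$.

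For the meromorphic continuation, I would apply Poisson summation to the inner kernel $KK_{\D}$ on the range $u \in (0, 1)$, converting this piece into a dual integral over $u^{-1} \in (1, \infty)$ with $\widehat{\Psi} \in \Sch(\D(\A))$ in place of $\Psi$. Combined with the $u \in [1, \infty)$ piece (which is entire in $s_0$), this produces a Godement--Jacquet-style functional equation relating $s_0 \leftrightarrow -(s_0+1)$ and extends $\Theta_{\D}(s_0, \Psi)$ meromorphically to $\C$; the only possible poles arise from the singular Poisson terms $\widehat{\Psi}(0)$ and $\Psi(0)$, convolved with Tate zeta integrals on $\A^{\times}$ or $\A_{\E}^{\times}$ coming from the outer integration. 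Since $\Omega \neq \id$ by hypothesis, the relevant Tate integrals on $\E^{\times} \backslash \A_{\E}^{\times}$ are entire, which localises all possible poles to the discrete set $s_0 \in \{1, -1\}$ corresponding to $\omega \norm_{\A}^{2(s_0+1)} \in \{\id, \norm_{\A}\}$. The main technical obstacle I anticipate is controlling the interplay between the Poisson formula on $\D(\A)$ and the non-trivial outer twist $\Omega(t_1^{-1} t_2)$: Poisson transforms this twist into a twisted sum on the dual lattice, whose convergence must be checked place by place, but these reductions follow standard analytic lines paralleling the treatment of Proposition \ref{GoalMotoDis} in the split case.
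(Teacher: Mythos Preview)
Your argument for absolute convergence is correct, though the Poisson step is not needed: the paper simply bounds the lattice sum via Proposition~\ref{PoissonSEst} applied coordinate-wise after identifying $\D(\A) \simeq \A^4$, which gives $KK_{\D}(t_1,t_2z) \ll_N \min(\norm[z]_{\A}^{-1},\norm[z]_{\A}^{-N})^4$ directly and yields the same threshold $\Re s_0 > 1$.

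For the meromorphic continuation your route is genuinely different from the paper's. You propose a single Godement--Jacquet/Tate argument on the inner $z$-integral (split at $\norm[z]_{\A}=1$, Poisson on the small piece, extract $\widehat{\Psi}(0)$ and $\Psi(0)$), which does establish the continuation with the correct pole set. The paper instead gives \emph{two} independent continuations, each structured to feed into Theorem~\ref{MainVar}: a spectral one via the pre-trace formula of Theorem~\ref{GJPreTraceVar}, decomposing $\Theta_{\D}$ into Godement--Jacquet zeta integrals over cuspidal $\pi$ plus one-dimensional pieces ($M(s_0,\Psi)+DS(s_0,\Psi)$); and a geometric one via the orbital decomposition $\D^{\times}(\F) = (\E^{\times}+\E^{\times}j) \sqcup \E^{\times} \sqcup \E^{\times}j$ under $\E^{\times}\times\E^{\times}$, followed by Mellin inversion on $\E^{\times}\backslash\A_{\E}^{\times}$ and a contour shift ($M_2(s_0,\Psi)+\sum DG_j(s_0,\Psi)$). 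Your approach is more economical if one only wants the bare proposition, but it does not produce either of these explicit decompositions, and hence would not by itself yield the spectral reciprocity of Theorem~\ref{MainVar}; the paper's two continuations are designed precisely so that equating them \emph{is} the Motohashi-type identity.
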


	Similarly to Proposition \ref{GoalMotoDis}, we will establish Proposition \ref{GoalMotoDisVar} with two different methods. They allow us to write $\Theta_{\D}(s_0,\Psi)$ for $\norm[s_0] < 1/4$ as
	$$ M(s_0,\Psi) + DS(s_0,\Psi) \quad \text{resp.} \quad M_2(s_0,\Psi) + \sum_{j=1}^4 DG_j(s_0,\Psi). $$
	Every term is regular at $s_0 = 0$. $M(0,\Psi)$ represents some mixed moment of $L(1/2,\pi) L(1/2, \pi_{\E} \otimes \Omega^{-1})$ as $\pi$ traverses cuspidal representations of $\D^{\times}$ with central character $\omega$. $M_2(0,\Psi)$ represents the mean value of $L(1/2-i\tau,\Omega\Xi^{-1})L(1/2+i\tau,\Xi)$ as $\Xi$ traverses the unitary characters of $[\E^1 \backslash \E^{\times}]$ and $\tau \in \R$.

\begin{theorem}
	We get an equality of tempered distributions
	$$ M(0, \Psi) + DS(0, \Psi) = M_2(0, \Psi) + \sum_{j=1}^4 DG_j(0, \Psi), \quad \forall \Psi \in \Sch(\D(\A)). $$
\label{MainVar}
\end{theorem}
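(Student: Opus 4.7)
The plan is to view Theorem \ref{MainVar} as the equality, at $s_0 = 0$, of two independent meromorphic continuations of the single tempered distribution $\Theta_{\D}(s_0, \Psi)$ from its region of absolute convergence $\Re s_0 > 1$ to a neighbourhood of the origin. Granted Proposition \ref{GoalMotoDisVar}, the two decompositions $M(s_0,\Psi) + DS(s_0,\Psi)$ and $M_2(s_0,\Psi) + \sum_{j=1}^{4} DG_j(s_0,\Psi)$ must coincide as meromorphic functions on $|s_0| < 1/4$ by uniqueness of analytic continuation; specialising at $s_0 = 0$, where by construction every constituent is regular, yields the asserted identity of distributions on $\Sch(\D(\A))$. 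Thus the real content is producing the two decompositions.

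For the spectral decomposition I exploit that $\D$ is a division algebra, so that $\D^{\times}(\F) \gp{Z}(\A) \backslash \D^{\times}(\A)$ is compact and the kernel $KK_{\D}(x,y)$ has a purely discrete spectral expansion. Substituting this expansion into (\ref{MainMDVar}) and interchanging summation with the inner central integral together with the outer double toric integration over $(\E^{\times} \A^{\times} \backslash \A_{\E}^{\times})^2$, each irreducible $\pi$ with central character $\omega$ contributes a product of two $\Omega$-twisted toric periods. By Waldspurger's formula these factor into $L(1/2, \pi_{\E} \otimes \Omega^{-1})$, while the factor $L(1/2, \pi)$ enters through the Godement--Jacquet zeta integral packaged in the pre-trace formula of Theorem \ref{GJPreTraceVar}; together they assemble into the mixed moment $M(s_0, \Psi)$, meromorphically continued in $s_0$ via the Godement--Jacquet functional equation. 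The contribution of the one-dimensional components of the spectrum is, after continuation, the degenerate term $DS(s_0,\Psi)$.

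For the geometric decomposition I partition the sum $\sum_{\xi \in \D^{\times}(\F)}$ defining $KK_{\D}$ into orbits under the $H(\F) = (\E^{\times} \times \E^{\times})/\F^{\times}$-action indicated in (\ref{CompGraphVar}). On the regular orbits, represented by $\xi \in \D^{\times}(\F) \setminus \E$, the double integral over $(\E^{\times} \A^{\times} \backslash \A_{\E}^{\times})^2$ unfolds to an integral over an open subvariety of $\A_{\E} \backslash \D(\A)$, and I then apply Poisson summation along $\E$-fibres using the reduced trace pairing on $\D$. Expanding the resulting integrals on the dual side against the Plancherel decomposition for $[\F^{\times} \backslash \E^{\times}] \simeq [\E^1 \backslash \E^{\times}] \times \R$ produces the mean value $M_2(s_0,\Psi)$ indexed by the unitary characters $\Xi$ of $[\E^1 \backslash \E^{\times}]$ and $\tau \in \R$. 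The singular orbits $\xi \in \E^{\times}$, together with their Poisson duals, contribute the four degenerate terms $DG_j(s_0,\Psi)$, each of which receives an independent meromorphic continuation via auxiliary Tate integrals on $\E$.

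The main obstacle is showing that each $DG_j$ and $DS$ is individually meromorphic in a neighbourhood of $s_0 = 0$ with controlled polar behaviour, and that the singularities cancel within $DS$ and within $\sum_j DG_j$ so that the common regular value exists. The assumption $\Omega \neq \mathbbm{1}$ is crucial here: it kills the contribution of the trivial Hecke character on $\E$, ensures $L(s,\Omega)$ is entire, and thereby renders the degenerate analysis noticeably lighter than the split counterpart underlying Theorem \ref{Main}. Once the two decompositions are established as meromorphic identities on $|s_0| < 1/4$, the theorem follows by equating them at $s_0 = 0$.
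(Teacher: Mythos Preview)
Your overall strategy is correct and matches the paper's: both sides are obtained as meromorphic continuations of $\Theta_{\D}(s_0,\Psi)$ from $\Re s_0 > 1$, and equating them at $s_0=0$ gives the result. The spectral side via Theorem \ref{GJPreTraceVar} is exactly as you describe.

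On the geometric side, however, your description diverges from the paper's execution. The paper does not apply Poisson summation along $\E$-fibres; instead it decomposes $\D^{\times}(\F) = (\E^{\times}+\E^{\times}j) \sqcup \E^{\times} \sqcup \E^{\times}j$ under the $\E^{\times} \times \E^{\times}$-action, where the two singular orbits directly yield $DG_1, DG_2$, and then handles the regular orbit by Mellin inversion on $[\E^1 \backslash \E^{\times}]$ followed by contour shifts in the Mellin variable. The terms $DG_3, DG_4$ arise as residues picked up during those shifts (via poles of Tate integrals on $\A_{\E}$), not as ``Poisson duals'' of singular orbits. Your remark that ``singularities cancel within $DS$ and within $\sum_j DG_j$'' also overstates the difficulty here: under the standing hypothesis $\Omega \neq \id$, every $DG_j$ and $DS$ is individually regular at $s_0=0$, so no cancellation analysis is needed --- this is precisely why the compact variation is lighter than the split case of Theorem \ref{Main}.
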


	\subsection{List of Tempered Distributions}
	\label{DegTerms}

		\subsubsection{Geometric Main Term}
		
	We can identify $\Mat_2(\A)$ with $\A^4$ via
	$$ \Mat_2(\A) \simeq \A^4, \quad \begin{pmatrix} x_1 & x_2 \\ x_3 & x_4 \end{pmatrix} \mapsto (x_1,x_2,x_3,x_4). $$
	In particular, we transport $\OFour_j$, the partial Fourier transform on $\A^4$ with respect to the $j$-th variable, to $\Sch(\Mat_2(\A))$. Tate's integral (see (\ref{TateInt})) for $\Sch(\A)$ admits obvious multi-dimensional generalization. In particular, it generalizes to $\Sch(\Mat_2(\A))$. Define (recall we have identified $\lambda$ with $(s_1,s_2,s_0) \in \C^3$)
	$$ \eta_1 = \omega^{-1}\chi_1^{-1}\chi_2, \quad \eta_2 = \omega^{-1}\chi_2, \quad \eta_3 = \omega^{-1}\chi_1^{-1}, \quad \eta_4 = \mathbbm{1}, $$
	$$ s_1' = s_2-s_0-s_1, \quad s_2' = s_2-s_0, \quad s_3' = -s_0-s_1, \quad s_4' = 0. $$
	We form a tempered distribution for $\lambda$ near $\vec{0}$ using four dimensional Tate's integrals (see (\ref{TateInt}))
\begin{equation}
	M_4(\lambda, \Psi) = \frac{1}{\zeta_{\F}^*} \sideset{}{_{\chi}} \sum \int_{\Re s = 1/2} \int_{(\A^{\times})^4} \OFour_2 \OFour_3 \Psi \begin{pmatrix} x_1 & x_2 \\ x_3 & x_4 \end{pmatrix} \left( \prod_{i=1}^4 \eta_i \chi(x_i) \norm[x_i]_{\A}^{s+s_i'} d^{\times}x_i \right) \frac{ds}{2\pi i},
\label{II_10AC}
\end{equation}
	which is absolutely convergent by the rapid decay recalled after (\ref{TateInt}). Note that the inner integral
\begin{equation} 
	M_4(\lambda, \Psi \mid \chi,s)  = \int_{(\A^{\times})^4} \OFour_2 \OFour_3 \Psi \begin{pmatrix} x_1 & x_2 \\ x_3 & x_4 \end{pmatrix} \left( \prod_{i=1}^4 \eta_i \chi(x_i) \norm[x_i]_{\A}^{s+s_i'} d^{\times}x_i \right)
\label{M4Proj}
\end{equation}
	naturally defines a meromorphic function in $(\lambda,s) \in \C^4$.
	

		\subsubsection{Spectral Main Term}
	
	Let $\pi$ be a cuspidal representation in the (right) regular representation $\rpR_{\omega}$ of $\GL_2(\A)$ on $V_{\omega} = \intL^2(\GL_2, \omega)$ given by
\begin{equation} 
	\intL^2(\GL_2, \omega) := \left\{ \varphi: \GL_2(\F) \backslash \GL_2(\A) \to \C \ \middle| \ \begin{matrix} \varphi(g z(u)) = \omega(u) \varphi(g), \quad \forall g \in \GL_2(\A), u \in \A^{\times} \\ \int_{[\PGL_2]} \norm[\varphi(g)]^2 dg < \infty \end{matrix} \right\}, 
\label{AutoSwC}
\end{equation}
	where we have written
	$$ [\PGL_2] := \gp{Z}(\A) \GL_2(\F) \backslash \GL_2(\A). $$
	If $V_{\pi}$ denotes the underlying Hilbert space of $\pi$ with subspace $V_{\pi}^{\infty}$ of smooth vectors realized as smooth functions on $\GL_2(\A)$, then we have the Hecke-Jacquet-Langlands zeta integrals (for all $s \in \C$)
	$$ \Zeta(s, \varphi, \chi) := \int_{\F^{\times} \backslash \A^{\times}} \varphi(a(t)) \chi(t) \norm[t]_{\A}^{s-\frac{1}{2}} d^{\times}t, \quad \forall \varphi \in V_{\pi}^{\infty}, \chi \in \widehat{\R_+ \F^{\times} \backslash \A^{\times}}. $$
	
	Similarly, let $\chi$ be a unitary character of $\R_+ \F^{\times} \backslash \A^{\times}$, we can associate the principal series representation $\pi_s := \pi(\chi \norm_{\A}^s, \omega \chi^{-1} \norm_{\A}^{-s})$, whose underlying Hilbert space is
	$$ V_{\chi,\omega\chi^{-1}} := \left\{ f: \gp{K} \to \C \ \middle| \ \begin{matrix} f \left( \begin{pmatrix} t_1 & u \\ 0 & t_2 \end{pmatrix} g \right) = \chi(t_1)\omega\chi^{-1}(t_2) f(g) \\ \int_{\gp{K}} \norm[f(\kappa)]^2 d\kappa < \infty \end{matrix} \right\}. $$
	Note that this space is the common one for all $s \in \C$, and the subspaces of smooth vectors are identical for any $s$, although the actions $\pi_s$ differ as $s$ varies. In particular, $\pi_s$ is unitary for $s \in i\R$. To any smooth vector $e \in V_{\chi,\omega\chi^{-1}}^{\infty}$ is associated a flat section $e(s)$ in $\pi_s$, from which we construct an Eisenstein series
	$$ \eis(s,e)(g) = \eis(e(s))(g) := \sideset{}{_{\gamma \in \gp{B}(\F) \backslash \GL_2(\F)}} \sum e(s)(\gamma g), $$
	convergent for $\Re s \gg 1$ and admitting meromorphic continuation to $s \in \C$. Defining the constant term
	$$ \eisCst(s,e)(g) := \int_{\F \backslash \A} \eis(s,e)(n(x)g) dx, $$
	we have the extended Hecke-Jacquet-Langlands zeta integrals
	$$ \Zeta\left( s', \eis(s,e), \eta \right) := \int_{\F^{\times} \backslash \A^{\times}} \left( \eis - \eisCst \right)(s,e)(a(t)) \eta(t) \norm[t]_{\A}^{s'-\frac{1}{2}} d^{\times}t, \quad \forall e \in V_{\chi,\omega\chi^{-1}}^{\infty}, \eta \in \widehat{\R_+ \F^{\times} \backslash \A^{\times}}, $$
	which is convergent for $\Re s' \gg 1$ and admits meromorphic continuation to $s' \in \C$.
	
	For $V = V_{\pi}$ or $V_{\chi,\omega\chi^{-1}}$, the underlying inner product $\Pairing{\cdot}{\cdot}$ identifies $V$ with its dual space $V^{\vee}$. Define 
	$$ \varphi^{\vee} := \overline{\varphi} / \Norm[\varphi]^2 \in V^{\vee}, \quad \forall 0 \neq \varphi \in V. $$
	The \emph{matrix coefficient} $\beta(e_2,e_1^{\vee})$ or $\beta_s(e_2, e_1^{\vee})$ associated with a pair of nonzero vectors $e_1,e_2 \in V$ is defined to be
	$$ \beta(e_2,e_1^{\vee})(g) := \frac{\Pairing{\pi(g)e_2}{e_1}}{\Norm[e_1]^2} \quad \text{or} \quad \beta_s(e_2, e_1^{\vee})(g) := \frac{\Pairing{\pi_s(g).e_2}{e_1}}{\Norm[e_1]^2}, \quad g \in \GL_2(\A). $$
	We have the Godement-Jacquet zeta integral for $\beta \in \{ \beta(e_2,e_1^{\vee}), \beta_s(e_2, e_1^{\vee}) \}$
	$$ \Zeta(s', \Psi, \beta) := \int_{\GL_2(\A)} \Psi(g) \beta(g) \norm[\det g]_{\A}^{s'+\frac{1}{2}} dg, $$
which is convergent for $\Re s' \gg 1$ and admits a meromorphic continuation to $s' \in \C$.

Let $\Bas = \Bas(\pi)$ resp. $\Bas(\chi,\omega\chi^{-1})$ be an orthogonal basis of $V_{\pi}$ resp. $V_{\chi, \omega\chi^{-1}}$. Then $\Bas^{\vee} := \left\{ \varphi^{\vee} \ \middle| \ \varphi \in \Bas \right\}$ is the dual basis of $\Bas$ in $V^{\vee}$. Define the \emph{(cuspidal resp. continuous) spectral Motohashi distributions}
\begin{align} 
	M_3(\lambda, \Psi \mid \pi) &:= \sum_{\varphi_1, \varphi_2 \in \Bas(\pi)} \Zeta \left( \frac{s_0+1}{2}, \Psi, \beta(\varphi_2, \varphi_1^{\vee}) \right) \cdot \nonumber \\ 
	&\quad \Zeta\left( s_1 + \frac{s_0+1}{2}, \varphi_1, \chi_1 \right) \cdot \Zeta\left( s_2 - \frac{s_0-1}{2}, \varphi_2^{\vee}, \chi_2 \right), \label{SpecMDCusp}
\end{align}
\begin{align}
	M_3(\lambda, \Psi \mid \chi ,s) &:= \sum_{e_1,e_2 \in \Bas(\chi, \omega \chi^{-1})} \Zeta \left( \frac{s_0+1}{2}, \Psi, \beta_s(e_2, e_1^{\vee}) \right) \cdot \nonumber \\
	&\quad \Zeta\left( s_1 + \frac{s_0+1}{2}, \eis(s,e_1), \chi_1 \right) \cdot \Zeta\left( s_2 - \frac{s_0-1}{2}, \eis(-s, e_2^{\vee}), \chi_2 \right), \label{SpecMDCont}
\end{align}
	which are Motohashi distributions of parameter $\lambda = (s_1,s_2,s_0)$ and are meromorphic in $(\lambda,s) \in \C^4$. $M_3(\vec{0}, \Psi)$ is defined for $\lambda$ near $\vec{0}$ via
\begin{equation}
	M_3(\lambda, \Psi) := \sum_{\substack{\pi \text{ cuspidal} \\ \omega_{\pi} = \omega }} M_3(\lambda, \Psi \mid \pi) + \sum_{\chi \in \widehat{\R_+ \F^{\times} \backslash \A^{\times}}} \int_{-\infty}^{\infty} M_3(\lambda, \Psi \mid \chi,i\tau) \frac{d\tau}{4\pi}.
\label{3rdM}
\end{equation}

		\subsubsection{Geometric Degenerate Terms}
		
	Using Tate's integral, we define eight Motohashi distributions of parameter $\lambda = (s_1,s_2,s_0)$ near $\vec{0}$ as
\begin{align}
	&DG_1(\lambda, \Psi) := \frac{1}{\zeta_{\F}^*} \Res_{\substack{\chi = \eta_1^{-1} \\ s=1-s_1'}} M_4(\lambda, \Psi \mid \chi, s) = \int_{(\A^{\times})^3} \OFour_1 \OFour_2 \OFour_3 \Psi \begin{pmatrix} 0 & x_2 \\ x_3 & x_4 \end{pmatrix} \nonumber \\
	&\quad \omega\chi_1^2\chi_2^{-2}(x_2) \norm[x_2]_{\A}^{s_1+1} \chi_2^{-1}(x_3) \norm[x_3]_{\A}^{1-s_2} \omega\chi_1\chi_2^{-1}(x_4) \norm[x_4]_{\A}^{1+s_0+s_1-s_2} \sideset{}{_{i\neq 1}} \prod d^{\times}x_i,
\label{II_11AC}
\end{align}
\begin{align}
	&DG_2(\lambda, \Psi) = \frac{1}{\zeta_{\F}^*} \Res_{\substack{\chi = \eta_2^{-1} \\ s=1-s_2'}} M_4(\lambda, \Psi \mid \chi, s) = \int_{(\A^{\times})^3} \OFour_3 \Psi \begin{pmatrix} x_1 & 0 \\ x_3 & x_4 \end{pmatrix} \nonumber \\
	&\quad \omega^{-1}\chi_1^{-2}\chi_2^2(x_1) \norm[x_1]_{\A}^{1-s_1} \omega^{-1}\chi_1^{-2}\chi_2(x_3) \norm[x_3]_{\A}^{1-s_1-s_2} \chi_1^{-1}\chi_2(x_4) \norm[x_4]_{\A}^{s_0-s_2+1} \sideset{}{_{i\neq 2}} \prod d^{\times}x_i,
\label{II_12AC}
\end{align}
\begin{align}
	&DG_3(\lambda, \Psi) = \frac{1}{\zeta_{\F}^*} \Res_{\substack{\chi = \eta_3^{-1} \\ s=1-s_3'}} M_4(\lambda, \Psi \mid \chi, s) = \int_{(\A^{\times})^3} \OFour_2 \Psi \begin{pmatrix} x_1 & x_2 \\ 0 & x_4 \end{pmatrix} \nonumber \\
	&\quad \chi_2(x_1) \norm[x_1]_{\A}^{s_2+1} \omega\chi_1^2\chi_2^{-1}(x_2) \norm[x_2]_{\A}^{1+s_1+s_2} \omega\chi_1(x_4) \norm[x_4]_{\A}^{s_0+s_1+1} \sideset{}{_{i\neq 3}} \prod d^{\times}x_i,
\label{II_13AC}
\end{align}
\begin{align}
	&DG_4(\lambda, \Psi) = \frac{1}{\zeta_{\F}^*} \Res_{\substack{\chi = \eta_4^{-1} \\ s=1-s_4'}} M_4(\lambda, \Psi \mid \chi, s) = \int_{(\A^{\times})^3} \OFour_2 \OFour_3 \OFour_4 \Psi \begin{pmatrix} x_1 & x_2 \\ x_3 & 0 \end{pmatrix} \nonumber \\
	&\quad \omega^{-1}\chi_1^{-1}\chi_2(x_1) \norm[x_1]_{\A}^{1+s_2-s_0-s_1} \chi_1\chi_2^{-1}(x_2) \norm[x_2]_{\A}^{1+s_2-s_0} \omega^{-1}\chi_1^{-1}(x_3) \norm[x_3]_{\A}^{1-s_0-s_1} \sideset{}{_{i\neq 4}} \prod d^{\times}x_i,
\label{II_14AC}
\end{align}
\begin{align}
	&DG_5(\lambda, \Psi) = - \frac{1}{\zeta_{\F}^*} \Res_{\substack{\chi = \eta_1^{-1} \\ s=-s_1'}} M_4(\lambda, \Psi \mid \chi, s) = \int_{(\A^{\times})^3} \OFour_2 \OFour_4 \Psi \begin{pmatrix} 0 & x_2 \\ x_3 & x_4 \end{pmatrix} \nonumber \\
	&\quad \chi_1(x_2)\norm[x_2]_{\A}^{s_1} \chi_2(x_3)\norm[x_3]_{\A}^{s_2+1} \omega^{-1}\chi_1^{-1}\chi_2(x_4)\norm[x_4]_{\A}^{s_2-s_0-s_1+1} \sideset{}{_{i \neq 1}} \prod d^{\times}x_i , \label{II_21CE}
\end{align}
\begin{align}
	&DG_6(\lambda, \Psi) = -\frac{1}{\zeta_{\F}^*} \Res_{\substack{\chi = \eta_2^{-1} \\ s=-s_2'}} M_4(\lambda, \Psi \mid \chi, s) = \int_{(\A^{\times})^3} \OFour_1 \OFour_2 \OFour_4 \Psi \begin{pmatrix} x_1 & 0 \\ x_3 & x_4 \end{pmatrix} \nonumber \\
	&\quad \chi_1(x_1)\norm[x_1]_{\A}^{s_1+1} \chi_1\chi_2(x_3)\norm[x_3]_{\A}^{s_2+s_1+1} \omega^{-1}\chi_2(x_4)\norm[x_4]_{\A}^{s_2-s_0+1} \sideset{}{_{i \neq 2}} \prod d^{\times}x_i , \label{II_22CE}
\end{align}
\begin{align}
	&DG_7(\lambda, \Psi) = -\frac{1}{\zeta_{\F}^*} \Res_{\substack{\chi = \eta_3^{-1} \\ s=-s_3'}} M_4(\lambda, \Psi \mid \chi, s) = \int_{(\A^{\times})^3} \OFour_2 \OFour_3 \Psi \begin{pmatrix} x_1 & x_2 \\ 0 & x_4 \end{pmatrix} \nonumber \\
	&\quad \chi_2(x_1)\norm[x_1]_{\A}^{s_2} \chi_1\chi_2(x_2)\norm[x_2]_{\A}^{s_2+s_1} \omega\chi_1(x_4)\norm[x_4]_{\A}^{s_0+s_1} \sideset{}{_{i \neq 3}} \prod d^{\times}x_i , \label{II_23CE}
\end{align}
\begin{align}
	&DG_8(\lambda, \Psi) = -\frac{1}{\zeta_{\F}^*} \Res_{\substack{\chi = \eta_4^{-1} \\ s=-s_4'}} M_4(\lambda, \Psi \mid \chi, s) = \int_{(\A^{\times})^3} \OFour_2 \Psi \begin{pmatrix} x_1 & x_2 \\ x_3 & 0 \end{pmatrix} \nonumber \\
	&\quad \omega^{-1}\chi_1^{-1}\chi_2(x_1)\norm[x_1]_{\A}^{s_2-s_0-s_1} \omega^{-1}\chi_2(x_2)\norm[x_2]_{\A}^{s_2-s_0} \omega\chi_1(x_3)\norm[x_3]_{\A}^{s_0+s_1+1} \sideset{}{_{i \neq 4}} \prod d^{\times}x_i . \label{II_24CE}
\end{align}
	They are possibly singular at $\lambda = \vec{0}$.

		\subsubsection{Spectral Degenerate Terms}
		
	For $\chi_1,\chi_2,\omega$ at generic position, the continuous spectral Motohashi distribution $M_3(\lambda, \Psi \mid \chi, s)$ (see (\ref{SpecMDCont})) has a simple pole for some special $s$, the residue at which is a Motohashi distribution of parameter $\lambda=(s_1,s_2,s_0)$, meromorphic in $\lambda$ and singular at $\lambda = \vec{0}$. The $DS_j(\lambda, \Psi)$ are given by
	$$ DS_0(\lambda,\Psi) := \frac{1}{\zeta_{\F}^*} \Res_{s=\frac{1+s_0}{2}} M_3(\lambda, \Psi \mid \mathbbm{1},s), \quad DS_1(\lambda, \Psi) := \frac{1}{\zeta_{\F}^*} \Res_{s=\frac{1-s_0}{2}} M_3(\lambda, \Psi \mid \mathbbm{1},s), $$
	$$ DS_2(\lambda, \Psi) := \frac{1}{\zeta_{\F}^*} \Res_{s=-\left( s_1 + \frac{s_0-1}{2} \right)} M_3(\lambda, \Psi \mid \chi_1^{-1},s), \quad DS_3(\lambda, \Psi) := \frac{1}{\zeta_{\F}^*} \Res_{s=-\left( s_2 - \frac{s_0+1}{2} \right)} M_3(\lambda, \Psi \mid \omega\chi_2^{-1},s), $$
	$$ DS_4(\lambda, \Psi) := \frac{1}{\zeta_{\F}^*} \Res_{s=s_2-\frac{s_0-1}{2}} M_3(\lambda, \Psi \mid \chi_2,s), \quad DS_5(\lambda, \Psi) := \frac{1}{\zeta_{\F}^*} \Res_{s=s_1+\frac{s_0+1}{2}} M_3(\lambda, \Psi \mid \omega\chi_1,s). $$

	\subsection{List of Tempered Distributions: Compact Variation}
	
		\subsubsection{Spectral Terms}
		\label{SpTVar}
	
	Let $\nu_{\D}$ be the reduced norm map of $\D$, whose restriction to $\E$ is the usual norm map $\nu_{\E}$ from $\E$ to $\F$. Write the (compact) automorphic quotient spaces as
	$$ [\D^{\times}] := \D^{\times}(\F) \A^{\times} \backslash \D^{\times}(\A), \quad [\E^{\times}] := \E^{\times}(\F) \A^{\times} \backslash \E^{\times}(\A). $$
	Let $\pi$ be any cuspidal representation of $\D^{\times}$ with central character $\omega$. Let $\eta$ be any Hecke character of $\F^{\times} \R_+ \backslash \A^{\times}$. We introduce
	$$ M(s_0,\Psi \mid \pi) := \sideset{}{_{\varphi_1, \varphi_2 \in \Bas(\pi)}} \sum \Zeta \left( s_0+\frac{1}{2}, \Psi, \beta(\varphi_2, \varphi_1^{\vee}) \right) \int_{[\E^{\times}]}\varphi_1(t_1) \Omega(t_1)^{-1} d^{\times}t_1 \int_{[\E^{\times}]} \varphi_2^{\vee}(t_2) \Omega(t_2) d^{\times}t_2, $$
	$$ DS(s_0, \Psi \mid \eta) := \left( \int_{\D^{\times}(\A)} \Psi(g) \eta(\nu_{\D}(g)) \norm[\nu_{\D}(g)]_{\A}^{s_0+1} dg \right) \cdot \mathbbm{1}_{\Omega = \eta \circ \nu_{\E}}. $$
	The integrals in $DS(s_0,\Psi \mid \eta)$ resp. $M(s_0,\Psi \mid \pi)$ with parameter $s_0$ have meromorphic continuation to $s_0 \in \C$, and are integral representations for $L(s_0+1/2,\eta)$ resp. $L(s_0+1/2,\pi)$ (see \cite[Proposition 6.9 \& Theorem 13.8]{GoJa72}). By Waldspurger's famous formula \cite{Wa85}, the product of integrals over $[\E^{\times}]$ is an integral representation for $L(1/2, \pi_{\E} \otimes \Omega^{-1})$, where $\pi_{\E}$ is the base change lift of $\pi$ to $(\D \otimes_{\F} \E)^{\times}(\A) \simeq \GL_2(\A_{\E})$. We define the tempered distributions
	$$ M(s_0, \Psi) := \sideset{}{_{\substack{\pi \text{\ cuspidal} \\ \omega_{\pi} = \omega}}} \sum M(s_0, \Psi \mid \pi), \quad DS(s_0, \Psi) := \sum_{\eta} DS(s_0,\Psi \mid \eta). $$
	Their value at $s_0=0$ represent some mixed moment of $L(1/2,\pi) L(1/2, \pi_{\E} \otimes \Omega^{-1})$ resp. first moment of $L(1/2,\eta)$ with $\eta \circ \nu_{\D}$ distinguished by $(\E^{\times}, \Omega^{-1})$.

		\subsubsection{Geometric Terms}
		
	By the structure theory (see \cite[Definition \Rmnum{1}.1]{Vi80}) there exists $j \in \D(\F)$ so that $\D = \E \oplus \E j$, $j^2 \in \F^{\times}$ and $j x j^{-1} = \bar{x}$ for any $x \in \E$, where $x \mapsto \bar{x}$ is the action of the non trivial element in the Galois group of $\E/\F$. Consequently, we have $\D(\A) \simeq \E(\A)^2$, hence $\Sch(\D(\A)) \simeq \Sch(\A_{\E}^2)$. Under this identification, we can view $\Psi \in \Sch(\A_{\E}^2)$ by writing $\Psi(x_1+x_2j)$ for $x_j \in \A_{\E}$. We denote by $\OFour_j$ the partial Fourier transform with respect to the variable $x_j$ for the additive character $\psi \circ \Tr_{\E/\F}$. Let the $\F$-group $\E^1$ be the subgroup of elements $t$ in $\E^{\times}$ such that $t \bar{t} = 1$. Define a compact group
	$$ [\E^1 \backslash \E^{\times}] := \E^{\times} \R_+ \E^1(\A) \backslash \A_{\E}^{\times}, $$
	where $\R_+$ is viewed as the image of a fixed section map $s_{\E}$ of the adelic norm map $\E^{\times} \backslash \A_{\E}^{\times} \to \R_+$.
	
	For $\norm[s_0] < 1/4$, we define for any $\Xi \in \widehat{[\E^1 \backslash \E^{\times}]}$
	$$ M_2(s_0,\Psi \mid \Xi) := \frac{1}{\Vol([\E^1 \backslash \E^{\times}])} \int_{(\frac{1}{2})} \int_{\A_{\E}^{\times} \times \A_{\E}^{\times}} \Psi(t_1+t_2j) \Omega \Xi^{-1}(t_1)\norm[t_1]_{\A_{\E}}^{s_0+1-s} \Xi(t_2) \norm[t_2]_{\A}^s d^{\times}t_1 d^{\times}t_2 \frac{ds}{2\pi i}, $$
	$$ DG_1(s_0,\Psi) = \Vol([\E^{\times}]) \cdot \int_{\A_{\E}^{\times}} \Psi(t) \Omega(t) \norm[t]_{\A_{\E}}^{s_0+1} d^{\times}t_1, $$
	$$ DG_2(s_0,\Psi) = \Vol([\E^{\times}]) \id_{\Omega \mid_{\E^1(\A)}=\id} \cdot \int_{\A_{\E}^{\times}} \Psi(t) \Omega(t) \norm[t]_{\A_{\E}}^{s_0+1} d^{\times}t_1, $$
	$$ DG_3(s_0,\Psi) = \frac{\zeta_{\E}^*}{\Vol([\E^1 \backslash \E^{\times}])} \int_{\A_{\E}^{\times}} \left( \int_{\A_{\E}} \Psi(t_1+x_2j) dx_2 \right) \Omega(t_1) \norm[t_1]_{\A_{\E}}^{s_0} d^{\times}t_1, $$
	$$ DG_4(s_0,\Psi) = \frac{\zeta_{\E}^*}{\Vol([\E^1 \backslash \E^{\times}])} \id_{\Omega \mid_{\E^1(\A)} = \id} \cdot \int_{\A_{\E}^{\times}} \left( \int_{\A_{\E}} \Psi(x_1+t_2j) dx_1 \right) \Omega(t_2) \norm[t_2]_{\A_{\E}}^{s_0} d^{\times}t_2. $$
	The geometric main term is defined to be
	$$ M_2(s_0,\Psi) := \sum_{\Xi \in \widehat{[\E^1 \backslash \E^{\times}]}} M_2(s_0,\Psi \mid \Xi), $$
	whose value at $s_0=0$ represents the mean value of $L(1/2-i\tau,\Omega\Xi^{-1})L(1/2+i\tau,\Xi)$. The degenerate terms $DG_j(s_0,\Psi)$ have obvious meromorphic continuation to $s_0 \in \C$ and are regular at $s_0$ under the assumption $\Omega \neq \id$.

\section{Third Moment (Spectral) Side}
\label{3rdMSec}

	\subsection{Some Spectral Theory}
	
	Recall the representation $(\rpR_{\omega}, V_{\omega})$ of $\GL_2(\A)$ given by (\ref{AutoSwC}).
\begin{definition}
	Let $\varphi \in V_{\omega}^{\infty}$ be a smooth vector represented by a smooth function on $\GL_2(\A)$, so that for any $X$ in the universal enveloping algebra of the Lie algebra of $\GL_2(\A_{\infty})$, we have
	$$ \extnorm{ \rpR_{\omega}(X).\varphi(g) } \ll_{\epsilon} \Ht(g)^{1/2-\epsilon} $$
	for any $\epsilon > 0$ sufficiently small, uniformly in $g$ lying in a/any \emph{Siegel domain}, then we call $\varphi$ ``nice''.
\label{NiceFDef}
\end{definition}
\begin{remark}
	$V_{\omega}^{\infty}$ is the Sobolev space of order $\infty$ for the $\intL^2$-norm.
\end{remark}
\begin{theorem}{(\emph{automorphic Fourier inversion formula})}
	For ``nice'' $\varphi \in V_{\omega}^{\infty}$, we have a decomposition 
\begin{align*}
	\varphi(g) &= \sideset{}{_{\substack{ \pi \text{ cuspidal} \\ \omega_{\pi} = \omega }}} \sum \sideset{}{_{e \in \Bas(\pi)}} \sum \Pairing{\varphi}{e} e(g) \\
	&+ \sum_{\chi \in \widehat{\R_+ \F^{\times} \backslash \A^{\times}}} \sum_{f \in \Bas(\chi,\omega\chi^{-1})} \int_{-\infty}^{\infty} \Pairing{\varphi}{\eis(i\tau,f)} \eis(i\tau,f)(g) \frac{d\tau}{4\pi} \\
	&+ \frac{1}{\Vol([\PGL_2])} \sideset{}{_{\substack{ \chi \in \widehat{\F^{\times} \backslash \A^{\times}} \\ \chi^2 = \omega }}} \sum \int_{[\PGL_2]} \varphi(x) \overline{\chi(\det x)} dx \cdot \chi(\det g)
\end{align*}
	with normal convergence in $ [\PGL_2] $. Here, $\Bas(\pi)$ resp. $\Bas(\chi,\omega\chi^{-1})$ is an orthonormal basis of $\gp{K}_{\infty}$-isotypic and $\gp{K}_{\fin}$-finite vectors of $V_{\pi}$ resp. $V_{\chi,\omega\chi^{-1}}$ and the \emph{automorphic Fourier coefficients}, namely the above terms written via inner product such as $\Pairing{\varphi}{\eis(i\tau,f)}$, are given by the usual convergent integrals.
\label{AutoFourInv}
\end{theorem}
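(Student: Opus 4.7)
The plan is to upgrade the standard $L^2$-Plancherel decomposition of $L^2([\PGL_2],\omega)$ (due to Langlands) to a pointwise identity with normal convergence. The three pieces in the statement — cuspidal, continuous Eisenstein along $\Re s = 0$, and the one-dimensional residual part supported on characters $\chi\circ\det$ with $\chi^2=\omega$ — are mutually orthogonal closed subrepresentations of $V_\omega$, so the identity holds a priori in the $L^2$-sense with Fourier coefficients $\Pairing{\varphi}{e}$, $\Pairing{\varphi}{\eis(i\tau,f)}$ legitimately given by the absolutely convergent integrals one expects, the convergence being due to moderate growth of $\varphi$ and rapid decay / polynomial growth (respectively) of the spectral functions.

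To pass to pointwise normal convergence I would exploit the ``niceness'' hypothesis in two complementary ways. First, since $\rpR_\omega(X).\varphi$ is again nice for every $X$ in the universal enveloping algebra of $\mathrm{Lie}(\GL_2(\F_\infty))$, we may apply any power of the Casimir $\Casimir^N$ and use self-adjointness to write
\[
\Pairing{\varphi}{e} \;=\; \lambda_\pi^{-N}\,\Pairing{\Casimir^N\varphi}{e}
\]
for a spectral vector $e$ with Casimir eigenvalue $\lambda_\pi$, and similarly for a discrete Hecke parameter at the finite places. This yields arbitrary polynomial decay of the Fourier coefficients in every spectral parameter, so that — combined with a Weyl-law style bound on the number of cuspidal / Eisenstein eigenfunctions in a spectral window — the cuspidal sum and the Eisenstein $\tau$-integral converge absolutely, uniformly on compacta of $\GL_2(\A)$.

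Second, to control the contribution near the cusp of $[\PGL_2]$ I would invoke the standard uniform estimates
\[
\norm[e(g)]\,,\ \norm[\eis(i\tau,f)(g)] \;\ll\; (1+\norm[\lambda])^{C}\,\Ht(g)^{1/2},
\]
which for the Eisenstein part are optimal (they come from the Maass--Selberg description of the constant term). The niceness bound $\Ht(g)^{1/2-\epsilon}$ on all derivatives, married to the polynomial spectral decay of the previous step, dominates this growth and yields the claimed normal convergence on all of $[\PGL_2]$.

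The main technical obstacle is precisely this borderline growth rate $\Ht(g)^{1/2}$ of Eisenstein series: the exponent $1/2-\epsilon$ in Definition \ref{NiceFDef} is the sharp threshold. A clean way to manage it is to split $\eis(i\tau,f) = \eisCst(i\tau,f) + (\eis - \eisCst)(i\tau,f)$: the non-constant part is of rapid decay in the cusp via its Whittaker expansion and causes no trouble, whereas the constant-term contribution must be treated by an explicit Mellin inversion in the Iwasawa $y$-variable. The third (residual) line of the stated decomposition emerges naturally as the residues of this Mellin contour shift at the poles corresponding to $\chi^2 = \omega$, and the matching of their coefficients with the stated $\Vol([\PGL_2])^{-1}\int_{[\PGL_2]}\varphi\overline{\chi\circ\det}$ must be verified using the functional equation of the standard intertwining operator — this matching is the one place where an honest computation is unavoidable.
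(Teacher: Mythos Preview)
Your overall strategy --- start from the $L^2$-Plancherel decomposition and upgrade to pointwise normal convergence via Sobolev/Casimir bounds --- is reasonable in spirit, but it skates over the one genuinely delicate point and then, in the last paragraph, tries to re-derive something that is already given.

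The real content of the theorem, as the paper's proof makes explicit, is the identification of the continuous-spectrum coefficient with the \emph{integral} $\Pairing{\varphi}{\eis(i\tau,f)}$. In the abstract $L^2$ direct-integral decomposition the Eisenstein ``Fourier coefficient'' is an element $a(i\tau,f)$ of a measurable field, defined only almost everywhere and only through an $L^2$ limit; since $\eis(i\tau,f)\notin L^2$, the equality $a(i\tau,f)=\int_{[\PGL_2]}\varphi\,\overline{\eis(i\tau,f)}$ is not a tautology. You assert it in your first paragraph (``legitimately given by the absolutely convergent integrals one expects'') but never prove it; absolute convergence of the right-hand side is necessary but far from sufficient. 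The paper's proof is essentially nothing \emph{but} this identification: it imports the decomposition with abstract $a(i\tau,f)$ and normal convergence from \cite[Theorem~2.16]{Wu14}, and then verifies $a(i\tau,f)=\Pairing{\varphi}{\eis(i\tau,f)}$ by the argument of \cite[Lemma~3.24]{Wu5}, which is precisely where the ``nice'' bound $\Ht(g)^{1/2-\epsilon}$ is used. Your Casimir trick $\Pairing{\varphi}{e}=\lambda^{-N}\Pairing{\Casimir^N\varphi}{e}$ presupposes this identification (self-adjointness of $\Casimir$ against a non-$L^2$ vector is exactly the issue), so the argument is circular as written.

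Your final paragraph is also off-target. The residual line $\sum_{\chi^2=\omega}(\cdots)\,\chi(\det g)$ is already an orthogonal summand in the $L^2$ decomposition you invoked at the outset; it does not need to, and should not, ``emerge'' from a Mellin contour shift applied to the Eisenstein constant terms. Splitting $\eis=\eisCst+(\eis-\eisCst)$ is a reasonable device for bounding growth in the cusp, but it plays no role in producing the residual contribution here, and the ``matching of coefficients via the functional equation of the intertwining operator'' you describe is a computation that simply does not occur in this proof.
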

\begin{proof}
	For general $\varphi \in V_{\omega}^{\infty}$, the above formula is established in \cite[Theorem 2.16]{Wu14} with $\Pairing{\varphi}{\eis(i\tau,f)}$ replaced by the existence of some $a(i\tau,f) \in \C$ (see the clarification in \cite{Wu5}). The justification of 
	$$ a(i\tau,f) = \Pairing{\varphi}{\eis(i\tau,e)} \quad \text{a.e. } \tau \in \R $$
literally follows the proof of \cite[Lemma 3.24]{Wu5}, replacing $h \in \Cont_c^{\infty}(\GL_2, \omega)$ there by ``nice'' $\varphi$ here. 
\end{proof}
\begin{remark}
	For a clarification of the terminology ``automorphic Fourier inversion'' as well as its relation with ``spectral decomposition'', please see \cite[\S 1.2 \& 1.3]{Wu5} and \cite[\S 2.1]{LPW21}.
\end{remark}
\begin{theorem}
	Let notations be as in the previous theorem. Write
	$$ \varphi_{\gp{N}}(g) := \int_{\F \backslash \A} \varphi(n(u)g) du, $$
	$$ W_e(g) := \int_{\F \backslash \A} e(n(u)g) \psi(-u) du \quad \text{resp.} \quad W(i\tau,f)(g) := \int_{\F \backslash \A} \eis(i\tau,f)(n(u)g) \psi(-u) du $$
	for the Whittaker function of $e$ resp. $\eis(i\tau,f)$. Then we have an equality as functions on $\gp{B}(\F) \backslash \GL_2(\A)$
\begin{align*}
	\varphi(g) - \varphi_{\gp{N}}(g) &= \sideset{}{_{\substack{ \pi \text{ cuspidal} \\ \omega_{\pi} = \omega }}} \sum \sideset{}{_{e \in \Bas(\pi)}} \sum \Pairing{\varphi}{e} \sideset{}{_{\alpha \in \F^{\times}}} \sum W_e(a(\alpha)g) \\
	&+ \sum_{\chi \in \widehat{\R_+ \F^{\times} \backslash \A^{\times}}} \sum_{f \in \Bas(\chi,\omega\chi^{-1})} \int_{-\infty}^{\infty} \Pairing{\varphi}{\eis(i\tau,f)} \sideset{}{_{\alpha \in \F^{\times}}} \sum W(i\tau,f)(a(\alpha)g) \frac{d\tau}{4\pi}
\end{align*}
	with absolute and uniform convergence in any Siegel domain. Moreover, the dominating sum by replacing each coefficient \& summand \& integrand with its absolute value is of rapid decay with respect to $\Ht(g)$.
\label{AutoFourInvV}
\end{theorem}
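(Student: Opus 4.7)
The plan is to deduce this directly from Theorem \ref{AutoFourInv}. Since a cusp form $e \in V_\pi^\infty$ has vanishing $\gp{N}$-constant term and the one-dimensional residual summands $\chi \circ \det$ coincide with their own $\gp{N}$-constant terms, applying $\varphi \mapsto \varphi_{\gp{N}}$ to both sides of Theorem \ref{AutoFourInv} and subtracting leaves only the cuspidal and continuous contributions, each of the form $\Pairing{\varphi}{e} \cdot (e(g) - e_{\gp{N}}(g))$ or $\Pairing{\varphi}{\eis(i\tau,f)} \cdot (\eis(i\tau,f)(g) - \eisCst(i\tau,f)(g))$. The standard Fourier-Whittaker expansion along $\gp{N}(\F) \backslash \gp{N}(\A)$ then rewrites these non-constant parts as the absolutely convergent sums $\sum_{\alpha \in \F^\times} W_e(a(\alpha)g)$ and $\sum_{\alpha \in \F^\times} W(i\tau,f)(a(\alpha)g)$, which produces the stated identity modulo the convergence justification.

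To control the dominating triple sum on a fixed Siegel domain, I would combine three standard ingredients. First, the niceness hypothesis on $\varphi$, together with repeated integration by parts against the Casimir element and the $\gp{K}_\infty$-Laplacian (and, at the finite places, against Hecke operators or level-raising idempotents), produces polynomial decay of any prescribed order in all spectral parameters (archimedean eigenvalue, $\tau$, weight, level) for both $\Pairing{\varphi}{e}$ and $\Pairing{\varphi}{\eis(i\tau,f)}$. Second, for $g$ in a Siegel domain, the Whittaker functions $W_e(a(\alpha)g)$ and $W(i\tau,f)(a(\alpha)g)$ are of uniform rapid decay in $\norm[\alpha]_{\A} \cdot \Ht(g)$, with implicit constants polynomial in the spectral parameters; this follows from the archimedean Jacquet differential equations on one hand and $\gp{K}_\fin$-finiteness at the finite places on the other. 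Third, Weyl's law bounds the number of $\pi$ or $f$ with bounded spectral parameter polynomially. Choosing the decay order in the first two ingredients large enough to beat this polynomial count yields the absolute and uniform convergence of the dominating sum, and at the same time its rapid decay in $\Ht(g)$, since every surviving summand is itself of rapid decay in $\Ht(g)$.

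The main obstacle lies in the first step on the continuous spectrum: as in the proof of Theorem \ref{AutoFourInv}, the coefficient $\Pairing{\varphi}{\eis(i\tau,f)}$ is a priori only an almost-everywhere representative of the abstract Fourier coefficient $a(i\tau,f)$, so integration by parts against Casimir must be carried out on the truncated inner product as in \cite[Lemma 3.24]{Wu5} before being identified with $\Pairing{\varphi}{\eis(i\tau,f)}$. Once this identification is made with the required polynomial growth control in $\tau$, the remaining estimates reduce to routine Sobolev-norm bookkeeping, and the exchange of sum/integral with the constant-term operator $\varphi \mapsto \varphi_{\gp{N}}$ in the first paragraph is a posteriori justified by the dominated convergence afforded by the bound just established.
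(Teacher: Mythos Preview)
Your outlined approach is sound and is the standard route: subtract the constant term from the spectral expansion of Theorem \ref{AutoFourInv}, replace each non-constant automorphic piece by its Fourier--Whittaker expansion over $\F^{\times}$, and then justify everything a posteriori by bounding the dominating sum via (i) rapid decay of the spectral coefficients from repeated application of Casimir/Laplacian to the ``nice'' $\varphi$, (ii) uniform rapid decay of Whittaker functions in $\norm[\alpha]_{\A} \Ht(g)$ with polynomial dependence on spectral parameters, and (iii) Weyl's law. The circularity you flag at the end is only apparent: one first proves the absolute convergence of the right-hand side independently, and then the exchange of $\int_{\F \backslash \A}$ with the spectral sum/integral is licensed by dominated convergence.

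There is nothing to compare against here: the paper does not prove this statement but simply cites it as \cite[Theorem 2.18]{Wu14}, remarking that the rapid-decay assertion is implicit in that proof. Your sketch is presumably close in spirit to what that reference does, and in any case supplies more detail than the present paper.
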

\begin{proof}
	This is \cite[Theorem 2.18]{Wu14}. Note that the ``moreover'' part is implicit in the proof.
\end{proof}

\begin{definition}
	Let $\omega$ be a unitary character of $\F^{\times} \backslash \ag{A}^{\times}$. Let $\varphi$ be a continuous function on $\GL_2(\F) \backslash \GL_2(\ag{A})$ with central character $\omega$. We call $\varphi$ \emph{finitely regularizable} if there exist characters $\chi_i: \F^{\times} \R_+ \backslash \ag{A}^{\times} \to \ag{C}^{(1)}$, $\alpha_i \in \ag{C}, n_i \in \ag{N}$ and continuous functions $f_i \in \Ind_{\gp{B}(\ag{A}) \cap \gp{K}}^{\gp{K}} (\chi_i, \omega \chi_i^{-1})$, such that for any $M \gg 1$
\begin{equation} 
	\varphi(n(x)a(y)k) = \varphi_{\gp{N}}^*(n(x)a(y)k) + O(\norm[y]_{\ag{A}}^{-M}), \text{ as } \norm[y]_{\ag{A}} \to \infty, 
\label{DiffRD}
\end{equation}
	where we have written the \emph{essential constant term}
	$$ \varphi_{\gp{N}}^*(n(x)a(y)k)=\varphi_{\gp{N}}^*(a(y)k)=\sideset{}{_{i=1}^l} \sum \chi_i(y) \norm[y]_{\ag{A}}^{\frac{1}{2}+\alpha_i} \log^{n_i} \norm[y]_{\ag{A}} f_i(k). $$
	We call $\Ex(\varphi)=\{ \chi_i \norm^{\frac{1}{2}+\alpha_i}: 1 \leq i \leq l \}$ the \emph{exponent set} of $\varphi$, and define
	$$ \Ex^+(\varphi) = \{ \chi_i \norm^{\frac{1}{2}+\alpha_i} \in \Ex(\varphi): \Re \alpha_i \geq 0 \}; \quad \Ex^-(\varphi) = \{ \chi_i \norm^{\frac{1}{2}+\alpha_i} \in \Ex(\varphi): \Re \alpha_i < 0 \}. $$
	The space of finitely regularizable functions with central character $\omega$ is denoted by $\Aut^{\freg}(\GL_2,\omega)$.
\label{FinRegFuncDef}
\end{definition}
\begin{proposition}
	Let $\xi_1,\xi_2, \omega$ be Hecke characters with $\xi_1 \xi_2 \omega =1$. Let $f \in \pi(\xi_1,\xi_2)$. Suppose $\varphi \in \Aut^{\freg}(\GL_2, \omega)$ as in Definition \ref{FinRegFuncDef}. For $\Re s \gg 1$ sufficiently large,
	$$ R(s,\varphi; f) := \int_{\F^{\times} \backslash \ag{A}^{\times}} \int_{\gp{K}} (\varphi_{\gp{N}} - \varphi_{\gp{N}}^*)(a(y)\kappa) f(\kappa) \xi_1(y) \norm[y]_{\ag{A}}^{s-\frac{1}{2}} d\kappa d^{\times}y $$
is absolutely convergent. It has a meromorphic continuation to $s \in \ag{C}$. If in addition
	$$ \Theta := \max_j \{ \Re \alpha_j \} < 0, $$
then we have, with the right hand side absolutely converging
	$$ R(s,\varphi; f) = \int_{[\PGL_2]} \varphi \cdot \eis(s,f), \quad \Theta < \Re s < -\Theta.  $$
\label{VRI}
\end{proposition}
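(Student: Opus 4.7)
The plan combines standard regularized Rankin-Selberg techniques with the specific structure of $\Aut^{\freg}$. For the absolute convergence of $R(s, \varphi; f)$ when $\Re s \gg 1$, I would integrate the pointwise decay (\ref{DiffRD}) over $\gp{N}(\F) \backslash \gp{N}(\A)$ to obtain $(\varphi_{\gp{N}} - \varphi_{\gp{N}}^*)(a(y)\kappa) = O(\norm[y]_{\A}^{-M})$ for every $M > 0$ as $\norm[y]_{\A} \to \infty$, so convergence at infinity is automatic for any $s$. For $\norm[y]_{\A} \to 0$, I would use the left $\SL_2(\F)$-invariance of $\varphi$ through the identity $w a(y) w^{-1} = z(y) a(y^{-1})$ to reduce the estimate at $a(y)$ to a Siegel-domain estimate at $a(y^{-1})$, producing a polynomial bound on $\varphi_{\gp{N}}$ (and hence on $\varphi_{\gp{N}} - \varphi_{\gp{N}}^*$) governed by the dual exponents $\chi_i^{-1}\omega \norm^{\frac{1}{2}-\alpha_i}$. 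This will yield absolute convergence of $R(s,\varphi;f)$ in a right half-plane.

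For the meromorphic continuation to $s \in \C$, I would split the $y$-integral at $\norm[y]_{\A} = 1$. The tail over $\norm[y]_{\A} \geq 1$ is entire by rapid decay. For the head over $\norm[y]_{\A} < 1$, I would introduce a ``dual essential constant term'' $\varphi_{\gp{N}}^{\#}$ with Weyl-reflected exponents---produced intrinsically from the $\Aut^{\freg}$ structure via the standard intertwining operator for the Borel subgroup---so that $\varphi_{\gp{N}} - \varphi_{\gp{N}}^{\#}$ decays rapidly as $y \to 0$ and contributes an entire function. The explicit remaining piece $\varphi_{\gp{N}}^{\#} - \varphi_{\gp{N}}^*$ produces a finite sum of integrals $\int_{\norm[y]_{\A} \leq 1} \chi(y) \norm[y]_{\A}^{s + \gamma} \log^n \norm[y]_{\A} d^{\times} y$ that admit explicit meromorphic continuations to $\C$ with poles at shifts of the exponents.

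For the identity under $\Theta < 0$: the bound $\varphi(g) \ll \Ht(g)^{\frac{1}{2} + \Theta + \epsilon}$ combined with $\eis(s,f)(g) \ll \Ht(g)^{\frac{1}{2} + |\Re s|}$ on a Siegel domain, together with the Iwasawa volume factor $\Ht^{-1}$, makes $\int_{[\PGL_2]} \varphi \cdot \eis(s, f)$ absolutely convergent in the strip $|\Re s| < -\Theta$, i.e., $\Theta < \Re s < -\Theta$. I would then unfold the Eisenstein series via Iwasawa coordinates to get
\begin{equation*}
\int_{[\PGL_2]} \varphi \cdot \eis(s, f) = \int_{\F^{\times}\backslash\A^{\times}} \int_{\gp{K}} \varphi_{\gp{N}}(a(y)\kappa) f(\kappa) \xi_1(y) \norm[y]_{\A}^{s - \frac{1}{2}} d\kappa d^{\times}y.
\end{equation*}
Decomposing $\varphi_{\gp{N}} = \varphi_{\gp{N}}^* + (\varphi_{\gp{N}} - \varphi_{\gp{N}}^*)$, the second piece is exactly $R(s, \varphi; f)$; the first produces Tate integrals $\int_{\F^{\times}\backslash\A^{\times}} \chi_i\xi_1(y) \norm[y]_{\A}^{s+\alpha_i} \log^{n_i}\norm[y]_{\A} d^{\times}y \cdot \int_{\gp{K}} f_i(\kappa) f(\kappa) d\kappa$, which vanish for generic $s$ because $\chi_i\xi_1\norm^{s+\alpha_i}$ is then a non-trivial Hecke character of $\F^{\times}\backslash\A^{\times}$, and extend to all $s$ by analytic continuation. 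The hard part will be rigorously justifying the unfolding inside the strip (where $\eis(s,f)$ need not be given by an absolutely convergent series); I expect to handle this either by first proving the identity in a common region of absolute convergence for both sides (available when $\Theta$ is sufficiently negative) and extending by analytic continuation, or by a truncation argument à la Arthur.
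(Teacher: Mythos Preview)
The paper does not give its own proof here; it simply cites \cite[Proposition 2.5]{Wu2}. Your sketch follows the standard Zagier-style regularized Rankin--Selberg outline and is more detailed than what the paper provides.

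There is, however, one genuine gap in your unfolding argument. After unfolding you obtain $\int_{\F^{\times}\backslash\A^{\times}}\int_{\gp{K}} \varphi_{\gp{N}}(a(y)\kappa)f(\kappa)\xi_1(y)\norm[y]_{\A}^{s-1/2}\,d\kappa\,d^{\times}y$ and then split $\varphi_{\gp{N}}=(\varphi_{\gp{N}}-\varphi_{\gp{N}}^*)+\varphi_{\gp{N}}^*$, asserting that the $\varphi_{\gp{N}}^*$-piece ``vanishes for generic $s$'' as a ``Tate integral''. But $\int_{\F^{\times}\backslash\A^{\times}}\chi_i\xi_1(y)\norm[y]_{\A}^{s+\alpha_i}\log^{n_i}\norm[y]_{\A}\,d^{\times}y$ is \emph{not} a Tate integral (there is no Schwartz function) and is never absolutely convergent: the $\R_+$-factor gives $\int_0^{\infty}t^{\Re(s+\alpha_i)}(\log t)^{n_i}\,dt/t$, which diverges for every $s$. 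You therefore cannot split the $y$-integral this way under a single integral sign, since the two pieces converge in disjoint half-planes (or not at all).

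The fix is to carry out the $\norm[y]_{\A}\gtrless 1$ splitting you already introduced for the meromorphic continuation of $R$ \emph{before} separating off $\varphi_{\gp{N}}^*$. On $\norm[y]_{\A}\geq 1$ both $\int(\varphi_{\gp{N}}-\varphi_{\gp{N}}^*)$ and $\int\varphi_{\gp{N}}^*$ converge (the former for all $s$, the latter for $\Re s<-\Theta$); on $\norm[y]_{\A}<1$ both converge in a suitable right half-plane. The two half-line $\varphi_{\gp{N}}^*$-contributions are explicit Mellin integrals of pure powers whose meromorphic continuations sum to zero away from the finitely many poles $s=-\alpha_i$. Reassembling then gives $R(s,\varphi;f)=\int_{[\PGL_2]}\varphi\cdot\eis(s,f)$ first for $1/2<\Re s<-\Theta$ (requiring $\Theta<-1/2$ for the unfolding), and then in the full strip by analytic continuation. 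For $-1/2\leq\Theta<0$ your suggested truncation argument (or a direct comparison of both sides' $\norm[y]_{\A}\gtrless 1$ decompositions, bypassing the unfolding of $\eis$) is indeed the cleanest route.
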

\begin{proof}
	This is (part of) \cite[Proposition 2.5]{Wu2}.
\end{proof}

	\subsection{Godement-Jacquet Pre-trace Formula}
	
	Above all, we mention the following elementary estimation without proof (see \cite[Lemma 5.37]{Wu9} or \cite[Lemma 11.4]{GoJa72}).
\begin{proposition}
	Let $\Phi \in \Sch(\A)$. Then we have for any $N > 1$
	$$ \sideset{}{_{\alpha \in \F^{\times}}} \sum \norm[\Phi(\alpha t)] \ll_N \min(\norm[t]_{\A}^{-1}, \norm[t]_{\A}^{-N}), $$
	where the implied constant depends only on $\F$ and some Schwartz norm of $\Phi$ with order depending on $N$.
\label{PoissonSEst}
\end{proposition}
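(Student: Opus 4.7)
\emph{Proof plan.} By linearity in $\Phi$, one may assume $\Phi = \Phi_\infty \otimes \Phi_f$ is a pure tensor with $\Phi_f = \otimes_{\vp < \infty}\Phi_\vp$ and $\Phi_\vp = \mathbbm{1}_{\vo_\vp}$ for almost all $\vp$. Since each $\Phi_\vp$ has compact support, there exist integers $m_\vp \geq 0$ (vanishing for almost all $\vp$) with $\mathrm{supp}(\Phi_\vp) \subseteq \vp^{-m_\vp}\vo_\vp$. Hence $\Phi_\vp(\alpha t_\vp) \neq 0$ forces $v_\vp(\alpha) \geq -v_\vp(t_\vp) - m_\vp$, confining $\alpha$ to a fractional ideal $J = J(t_f) \subset \F$ of absolute norm $\asymp \norm[t_f]_{\A}$, with the proportionality constant depending only on $\Phi_f$. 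Bounding $\norm[\Phi_f(\alpha t_f)] \leq \Norm[\Phi_f]_\infty$ reduces the problem to a lattice sum in $\F_\infty \simeq \R^{r_1}\times \C^{r_2}$:
$$ \sum_{\alpha \in \F^\times} \norm[\Phi(\alpha t)] \;\ll\; \sum_{\ell \in L\setminus\{0\}} \norm[\Phi_\infty(\ell)], \qquad L := t_\infty \cdot J, $$
where $L$ has covolume $\asymp \norm[t]_{\A}$.

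Next I would handle the two regimes separately. For $\norm[t]_{\A} \leq 1$ (dense lattice), the rapid decay bound $\norm[\Phi_\infty(x)] \ll_M \prod_v (1+\norm[x_v]_v)^{-M}$, with $M$ large enough to ensure integrability over $\F_\infty$, combined with a standard lattice-versus-integral comparison yields
$$ \sum_{\ell \in L\setminus\{0\}} \norm[\Phi_\infty(\ell)] \;\ll\; \frac{1}{\mathrm{covol}(L)}\int_{\F_\infty} \prod_v (1+\norm[x_v]_v)^{-M}\,dx \;\ll\; \norm[t]_{\A}^{-1}. $$
For $\norm[t]_{\A} \geq 1$ (sparse lattice), the critical arithmetic input is that $(\alpha) \subseteq J$ for every $\alpha \in J$, whence $\norm[\Nr_{\F/\Q}(\alpha)] \geq \Nr(J)$, and consequently
$$ \norm[\Nr_{\F/\Q}(\alpha t_\infty)] = \norm[\Nr_{\F/\Q}(\alpha)] \cdot \norm[t_\infty]_{\infty} \;\gtrsim\; \norm[t]_{\A} $$
for every $\alpha \in J\setminus\{0\}$. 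Factoring this lower bound out of each $\norm[\Phi_\infty(\ell)]$---at the cost of requiring a larger Schwartz exponent $M$---leaves a convergent residual sum, yielding $\ll_N \norm[t]_{\A}^{-N}$ for any prescribed $N$.

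The principal subtlety is ensuring uniformity in $t$ of the implicit constants, particularly for the lattice-versus-integral comparison near $\norm[t]_{\A} \sim 1$. The lattice $L = t_\infty J$ depends on $t$ through both the archimedean scaling $t_\infty$ and the arithmetic shape of $J$, so one must verify that after normalization by the covolume these lattices form a compact family of shapes. This reduces to the finiteness of the class number of $\F$ together with the compactness of the $\vo^\times$-action on the archimedean embeddings of fractional ideals, both standard. The growth of the implicit constant with $N$ reflects the order of the Schwartz seminorm of $\Phi_\infty$ entering the rapid decay bound, which must be taken $\asymp N$.
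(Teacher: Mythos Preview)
The paper does not prove this proposition at all; it states it ``without proof'' and points to \cite[Lemma 5.37]{Wu9} and \cite[Lemma 11.4]{GoJa72}. Your argument is the standard one and is correct in outline: reduce to pure tensors, use the compact support at finite places to confine $\alpha$ to a fractional ideal $J$ with $\Nr(J)\asymp \norm[t_f]_{\A}$, and then estimate a lattice sum in $\F_\infty$ via integral comparison (dense regime) and via the product--formula lower bound $\prod_{v\mid\infty}\norm[\ell_v]_v\gtrsim \norm[t]_{\A}$ (sparse regime).

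One remark on presentation: the lattice--shape control you invoke in the last paragraph is needed uniformly in \emph{both} regimes, not only near $\norm[t]_{\A}\sim 1$. In the dense case it guarantees the Riemann--sum comparison has a uniform constant; in the sparse case it guarantees the residual sum $\sum_{\ell\in L\setminus\{0\}}\prod_v(1+\norm[\ell_v]_v)^{-(M-N)}$ is bounded independently of $t$. The mechanism you sketch (finiteness of the class group to reduce to finitely many ideal classes $J_0$, then Dirichlet's unit theorem to balance the archimedean components of $t_\infty$) is exactly right and should be stated once, upstream of the two cases. With that adjustment the argument is complete.
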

	For fixed $x$, we propose to study the function in $y$
	$$ \Tree{KK}{\omega, s_0}{x}{y} := \int_{\F^{\times} \backslash \A^{\times}} \Shoulder{KK}{x}{y z(u)} \omega(u) \norm[u]_{\A}^{s_0+2} d^{\times}u, $$
	$$ \text{or equivalently} \quad \Tree{\widetilde{KK}}{\omega, s_0}{x}{y} := \Tree{KK}{\omega, s_0}{x}{y} \norm[\det x^{-1}y]_{\A}^{\frac{s_0}{2}+1}. $$
\begin{remark}
	The partition of $\Mat_2(\F) - \{ 0 \}$ into $\GL_2(\F) \times \GL_2(\F)$-orbits
	$$ \Mat_2(\F) - \{ 0 \} = \sideset{}{_{i=1}^2} \bigsqcup \Mat_2^{(i)}(\F) \quad \text{with} \quad \Mat_2^{(i)}(\F) := \left\{ \xi \in \Mat_2(\F) \ \middle| \ \mathrm{rank}(\xi) = i \right\}. $$
	implies a decomposition
\begin{equation} 
	\Shoulder{KK}{x}{y} = \sideset{}{_{i=1}^2} \sum \Shoulder{KK^{(i)}}{x}{y}, \quad \Shoulder{KK^{(i)}}{x}{y} := \sideset{}{_{\xi \in \Mat_2^{(i)}(\F)}} \sum \Psi(x^{-1} \xi y). 
\label{KKDecomp}
\end{equation}
	They give the corresponding functions $\Tree{KK^{(i)}}{\omega, s_0}{x}{y}$, $\Tree{\widetilde{KK}^{(i)}}{\omega, s_0}{x}{y}$ and $\Tree{\widetilde{\Delta \Delta}^{(i)}}{\omega, s_0}{x}{y}$, etc.
\end{remark}
	
	We would like to show that $y \mapsto \Tree{KK^{(i)}}{\omega, s_0}{x}{y}$ lie in $\Aut^{\freg}(\GL_2,\omega^{-1})$. This is obvious for $i=1$. In fact, if we define
	$$ \Shoulder{RR^{(1)}}{x}{y} := \sideset{}{_{\alpha \in \F^{\times}}} \sum \rpL_x \rpR_y \Psi \begin{pmatrix} 0 & \alpha \\ 0 & 0 \end{pmatrix}, $$
\begin{align} 
	\Tree{\widetilde{RR}^{(1)}}{\omega,s_0}{x}{y} &:= \int_{\F^{\times} \backslash \A^{\times}} \Shoulder{RR^{(1)}}{x}{y d(z,z)} \omega(z) \norm[z]_{\A}^{s_0+2} d^{\times}z \cdot \norm[\det x^{-1}y]_{\A}^{\frac{s_0}{2}+1} \nonumber \\
	&= \int_{\A^{\times}} \Psi \left( x^{-1} \begin{pmatrix} 0 & z \\ 0 & 0 \end{pmatrix} y \right) \omega(z) \norm[z]_{\A}^{s_0+2} d^{\times}z \cdot \norm[\det x^{-1}y]_{\A}^{\frac{s_0}{2}+1}, \label{R1BiSec}
\end{align}
	then it is easy to verify that
	$$ x \mapsto \Tree{\widetilde{RR}^{(1)}}{\omega,s_0}{x}{y} \quad \text{resp.} \quad y \mapsto \Tree{\widetilde{RR}^{(1)}}{\omega,s_0}{x}{y} $$
	lies in the induced model of the principal series representation $\pi(\omega \norm_{\A}^{(s_0+1)/2}, \norm_{\A}^{-(1+s_0)/2})$ resp. \\ $\pi(\norm_{\A}^{(s_0+1)/2}, \omega^{-1}\norm_{\A}^{-(1+s_0)/2})$. Note that $\Mat_2^{(1)}(\F)$ is a single orbit under the action of $\GL_2(\F) \times \GL_2(\F)$, and the stabilizer group of the line consisting of the elements
	$$ \begin{pmatrix} 0 & \alpha \\ 0 & 0 \end{pmatrix}, \quad \alpha \in \F^{\times} $$
	is $\gp{B}(\F) \times \gp{B}(\F)$. Therefore we get (with absolute convergence for $\Re s_0 > 1$)
\begin{equation} 
	\Tree{\widetilde{KK}^{(1)}}{\omega,s_0}{x}{y} = \sum_{[\gamma_1], [\gamma_2] \in \gp{B}(\F) \backslash \GL_2(\F)} \Tree{\widetilde{RR}^{(1)}}{\omega,s_0}{\gamma_1 x}{\gamma_2 y} 
\label{R1Eis}
\end{equation}
	is an Eisenstein series in either variable, hence lies in $\Aut^{\freg}(\GL_2,\omega^{-1})$. 
	
	It remains to show that $y \mapsto \Tree{KK}{\omega, s_0}{x}{y}$ lie in $\Aut^{\freg}(\GL_2,\omega^{-1})$. We assume
	$$ y = n(u) a(t) \kappa, \qquad u \in \F \backslash \A, t \in \F^{\times} \backslash \A^{\times}, \kappa \in \gp{K} $$
with $\norm[t]_{\A} \geq 1$. Fourier inversion for the unipotent group gives
	$$ \Shoulder{KK}{x}{y} = \Shoulder{KN}{x}{y} + \sideset{}{_{\delta \in \F^{\times}}} \sum \Shoulder{KW}{x}{a(\delta)y}, $$
	where $\Shoulder{KN}{x}{y}$ is given in (\ref{PartialCstT}) and
\begin{equation}
	\Shoulder{KW}{x}{y} := \int_{\F \backslash \A} \Shoulder{KK}{x}{n(u)y} \psi(-u) du.
\label{PartialWhiR}
\end{equation}

	We first deal with the sum over $\delta \in \F^{\times}$. The orbital decomposition of $\Mat_2^*(\F) := \Mat_2^{(1)}(\F) \cup \Mat_2^{(2)}(\F)$ by right multiplication by $\gp{N}(\F)$ (or $\gp{B}(\F)$)
\begin{align*}
	\Mat_2^*(\F) &= \sideset{}{_{\substack{ \alpha, \beta \in \F^{\times} \\ \gamma \in \F }}} \bigsqcup \begin{pmatrix} \alpha & \gamma \\ \beta & 0 \end{pmatrix} \gp{N}(\F) \sqcup \sideset{}{_{\substack{ \beta \in \F^{\times} \\ \gamma \in \F }}} \bigsqcup \begin{pmatrix} 0 & \gamma \\ \beta & 0 \end{pmatrix} \gp{N}(\F) \sqcup \sideset{}{_{\substack{ \alpha \in \F^{\times} \\ \gamma \in \F }}} \bigsqcup \begin{pmatrix} \alpha & 0 \\ 0 & \gamma \end{pmatrix} \gp{N}(\F) \\
	&\quad \sqcup \sideset{}{_{(\alpha,\beta) \in \F^2- \{ \vec{0} \}}} \bigsqcup \begin{pmatrix} 0 & \alpha \\ 0 & \beta \end{pmatrix} \\
	&=: \vO_1 \sqcup \vO_2 \sqcup \vO_3 \sqcup \vO_4
\end{align*}
	implies the following decomposition
	$$ \Shoulder{KW}{x}{y} = \sideset{}{_{i=1}^5} \sum \Shoulder{KW_i}{x}{y}, \qquad \Shoulder{KW_i}{x}{y} := \int_{\F \backslash \A} \sum_{\xi \in \vO_i} \Psi(x^{-1} \xi n(u) y) \psi(-u) du. $$
	For the first term, we have
	$$ \Shoulder{KW_1}{x}{n(u)a(t)\kappa} = \psi(u) \int_{\A} \sideset{}{_{\substack{ \alpha, \beta \in \F^{\times} \\ \gamma \in \F }}} \sum \rpL_{d(\alpha,\beta)^{-1}} \rpL_x \rpR_{\kappa} \Psi \begin{pmatrix} t & \gamma + v \\ t & v \end{pmatrix} \psi(-v) dv. $$
	For any $\Psi \in \Sch(\Mat_2(\A))$, Poisson summation formula implies
	$$ \int_{\A} \sideset{}{_{\gamma \in \F}} \sum \Psi \begin{pmatrix} t & \gamma + v \\ t & v \end{pmatrix} \psi(-v) dv = \sideset{}{_{\gamma \in \F}} \sum \OFour_4 \OFour_2 \Psi\begin{pmatrix} t & \gamma \\ t & 1-\gamma \end{pmatrix}. $$
	Hence we can rewrite
	$$ \Shoulder{KW_1}{x}{n(u)a(t)\kappa d(z,z)} = \frac{\psi(u)}{\norm[z]_{\A}^2} \sideset{}{_{\substack{ \alpha, \beta \in \F^{\times} \\ \gamma \in \F }}} \sum \OFour_4 \OFour_2 \rpL_x \rpR_{\kappa} \Psi\begin{pmatrix} \alpha t z & \gamma \alpha^{-1} z^{-1} \\ \beta t z & (1-\gamma) \beta^{-1} z^{-1} \end{pmatrix}, $$
	from which we deduce
\begin{align*}
	\sideset{}{_{\delta \in \F^{\times}}} \sum \extnorm{ \Shoulder{KW_1}{x}{a(\delta)n(u)a(t)\kappa d(z,z)} } &\leq \norm[z]_{\A}^{-2} \sideset{}{_{\substack{ \alpha, \beta, \delta \in \F^{\times} \\ \gamma \in \F }}} \sum \extnorm{ \OFour_4 \OFour_2 \rpL_x \rpR_{\kappa} \Psi\begin{pmatrix} \alpha t z & \gamma \delta^{-1} \alpha^{-1} z^{-1} \\ \beta t z & (1-\gamma) \delta^{-1} \beta^{-1} z^{-1} \end{pmatrix} } \\
	&= \norm[z]_{\A}^{-2} \sideset{}{_{\alpha, \beta \in \F^{\times}}} \sum \sideset{}{_{\substack{ \gamma, \delta \in \F \\ \gamma + \delta \neq 0 }}} \sum \extnorm{ \OFour_4 \OFour_2 \rpL_x \rpR_{\kappa} \Psi\begin{pmatrix} \alpha t z & \gamma \alpha^{-1} z^{-1} \\ \beta t z & \delta \beta^{-1} z^{-1} \end{pmatrix} }.
\end{align*}
	We split the sum over $\gamma, \delta$ as
	$$ \sideset{}{_{\substack{ \gamma, \delta \in \F \\ \gamma + \delta \neq 0 }}} \sum = \sideset{}{_{\substack{ \gamma + \delta \neq 0 \\ \gamma \delta = 0 }}} \sum + \sideset{}{_{\substack{ \gamma, \delta \in \F^{\times} \\ \gamma + \delta \neq 0 }}} \sum. $$
	By Proposition \ref{PoissonSEst}, the first part is bounded as
\begin{align*}
	&\quad \norm[z]_{\A}^{-2} \sideset{}{_{\alpha, \beta, \gamma \in \F^{\times}}} \sum \extnorm{ \OFour_4 \OFour_2 \rpL_x \rpR_{\kappa} \Psi\begin{pmatrix} \alpha t z & \gamma \alpha^{-1} z^{-1} \\ \beta t z & 0 \end{pmatrix} } + \norm[z]_{\A}^{-2} \sideset{}{_{\alpha, \beta, \delta \in \F^{\times}}} \sum \extnorm{ \OFour_4 \OFour_2 \rpL_x \rpR_{\kappa} \Psi\begin{pmatrix} \alpha t z & 0 \\ \beta t z & \delta \beta^{-1} z^{-1} \end{pmatrix} } \\
	&\ll_{N_1,N_2} \norm[z]_{\A}^{-2} \min(\norm[tz]_{\A}^{-1}, \norm[tz]_{\A}^{-N_1}) \min(\norm[z]_{\A}, \norm[z]_{\A}^{N_2});
\end{align*}
	similarly the second part is dominated and bounded as
\begin{align*}
	&\quad \norm[z]_{\A}^{-2} \sideset{}{_{\alpha, \beta \in \F^{\times}}} \sum \sideset{}{_{\substack{ \gamma, \delta \in \F^{\times} \\ \gamma + \delta \neq 0 }}} \sum \extnorm{ \OFour_4 \OFour_2 \rpL_x \rpR_{\kappa} \Psi\begin{pmatrix} \alpha t z & \gamma \alpha^{-1} z^{-1} \\ \beta t z & \delta \beta^{-1} z^{-1} \end{pmatrix} } \\
	&\leq \norm[z]_{\A}^{-2} \sideset{}{_{\alpha, \beta, \gamma, \delta \in \F^{\times}}} \sum \extnorm{ \OFour_4 \OFour_2 \rpL_x \rpR_{\kappa} \Psi\begin{pmatrix} \alpha t z & \gamma z^{-1} \\ \beta t z & \delta z^{-1} \end{pmatrix} } \ll_{N_1,N_2} \norm[z]_{\A}^{-2} \min(\norm[tz]_{\A}^{-1}, \norm[tz]_{\A}^{-N_1}) \min(\norm[z]_{\A}, \norm[z]_{\A}^{N_2}).
\end{align*}
	We deduce that for any $N_1, N_2 > 1$
	$$ \sideset{}{_{\delta \in \F^{\times}}} \sum \extnorm{ \Shoulder{KW_1}{x}{a(\delta)n(u)a(t)\kappa d(z,z)} } \ll_{N_1,N_2} \norm[z]_{\A}^{-2} \min(\norm[tz]_{\A}^{-1}, \norm[tz]_{\A}^{-N_1}) \min(\norm[z]_{\A}, \norm[z]_{\A}^{N_2}), $$
	where the implied constant depends only on $\F$ and some Schwartz norm of $\Psi$ with order depending only on $x$, $N_1$ and $N_2$ (not on $u$ nor $\kappa$). 
	
\noindent The treatment of the second and third terms being similar, we only deal with the second one. We have
\begin{align*}
	\Shoulder{KW_2}{x}{n(u)a(t)\kappa} &= \psi(u) \int_{\A} \sideset{}{_{\substack{\beta \in \F^{\times} \\ \gamma \in \F}}} \sum \Psi \left( x^{-1} \begin{pmatrix} 0 & \gamma \\ \beta & 0 \end{pmatrix} \begin{pmatrix} 1 & v \\ 0 & 1 \end{pmatrix} \begin{pmatrix} t & 0 \\ 0 & 1 \end{pmatrix} \kappa \right) \psi(-v) dv \\
	&= \psi(u) \sideset{}{_{\substack{\beta \in \F^{\times} \\ \gamma \in \F}}} \sum \OFour_4 \rpL_x \rpR_{\kappa} \Psi \begin{pmatrix} 0 & \gamma \\ \beta t & \beta^{-1} \end{pmatrix}.
\end{align*}
	It follows that for any $N_1, N_2 > 1$
\begin{align*}
	\sideset{}{_{\delta \in \F^{\times}}} \sum & \extnorm{ \Shoulder{KW_2}{x}{a(\delta)n(u)a(t)\kappa d(z,z)} } \leq \norm[z]_{\A}^{-1} \sideset{}{_{\substack{\beta, \delta \in \F^{\times} \\ \gamma \in \F}}} \sum \extnorm{ \OFour_4 \rpL_x \rpR_{\kappa} \Psi \begin{pmatrix} 0 & \gamma z \\ \beta \delta t z & \beta^{-1} z^{-1} \end{pmatrix} } \\
	&\ll_{N_1,N_2} \norm[z]_{\A}^{-1} \min(\norm[tz]_{\A}^{-1}, \norm[tz]_{\A}^{-N_1}) \min(\norm[z]_{\A}, \norm[z]_{\A}^{N_2}) \left\{ 1 + \norm[z]_{\A}^{-1} \right\}.
\end{align*}

\noindent We leave it as a simple exercise to check $\Shoulder{KW_4}{x}{y} = 0$.

\noindent We summarize the above estimation as: for any $N_1, N_2 > 1$
\begin{equation}
	\sideset{}{_{\delta \in \F^{\times}}} \sum \extnorm{\Shoulder{KW}{x}{a(\delta)n(u)a(t)\kappa d(z,z)}} \ll_{N_1,N_2} \min(\norm[tz]_{\A}^{-1}, \norm[tz]_{\A}^{-N_1}) \min(\norm[z]_{\A}, \norm[z]_{\A}^{N_2}).
\label{ParWhiBd}
\end{equation}

	For $\Shoulder{KN}{x}{y}$, the same classification of orbits implies the decomposition
	$$ \Shoulder{KN}{x}{y} = \sideset{}{_{i=1}^4} \sum \Shoulder{KN_i}{x}{y}, \qquad \Shoulder{KN_i}{x}{y} := \int_{\F \backslash \A} \sum_{\xi \in \vO_i} \Psi(x^{-1} \xi n(u) y) du. $$
	For the first term, we have
\begin{align*}
	\Shoulder{KN_1}{x}{y} &= \sideset{}{_{\substack{ \alpha, \beta \in \F^{\times} \\ \gamma \in \F }}} \sum \int_{\A} \rpL_x \rpR_y \Psi \left( \begin{pmatrix} \alpha & \gamma \\ \beta & 0 \end{pmatrix} \begin{pmatrix} 1 & u \\ 0 & 1 \end{pmatrix} \right) du \\
	&= \sideset{}{_{\substack{ \alpha, \beta \in \F^{\times} \\ \gamma \in \F }}} \sum \int_{\A} \rpL_{d(\alpha,\beta)^{-1}} \rpL_x \rpR_y \Psi \begin{pmatrix} 1 & u + \gamma \\ 1 & u \end{pmatrix} du \\
	&= \sideset{}{_{\alpha, \beta \in \F^{\times}}} \sum \int_{\A} \sideset{}{_{\gamma \in \F}} \sum \OFour_2 \rpL_{d(\alpha,\beta)^{-1}} \rpL_x \rpR_y \Psi \begin{pmatrix} 1 & \gamma \\ 1 & u \end{pmatrix} \psi(\gamma u) du \\
	&= \sideset{}{_{\substack{ \alpha, \beta \in \F^{\times} \\ \gamma \in \F }}} \sum \OFour_4 \OFour_2 \rpL_{d(\alpha,\beta)^{-1}} \rpL_x \rpR_y \Psi \begin{pmatrix} 1 & \gamma \\ 1 & -\gamma \end{pmatrix}.
\end{align*}
	We distinguish the terms for which $\gamma \neq 0$ from $\gamma = 0$, splitting the above sum as
	$$ \sideset{}{_{\substack{ \alpha, \beta \in \F^{\times} \\ \gamma \in \F }}} \sum = \sideset{}{_{\substack{ \alpha, \beta \in \F^{\times} \\ \gamma \in \F^{\times} }}} \sum + \sideset{}{_{\substack{ \alpha, \beta \in \F^{\times} \\ \gamma = 0 }}} \sum. $$
	Denote the first resp. second part by $\Shoulder{KN_{1,1}}{x}{y}$ resp. $\Shoulder{KN_{1,2}}{x}{y}$. For the first part, we have the bound for any $N_1, N_2 > 1$
\begin{align}
	\extnorm{ \Shoulder{KN_{1,1}}{x}{n(u)a(t)\kappa d(z,z)} } &= \norm[z]_{\A}^{-2} \sideset{}{_{\substack{ \alpha, \beta \in \F^{\times} \\ \gamma \in \F^{\times} }}} \sum \extnorm{ \OFour_4 \OFour_2 \rpL_x \rpR_{\kappa} \Psi \begin{pmatrix} \alpha t z & \gamma (\alpha z)^{-1} \\ \beta t z & -\gamma (\beta z)^{-1} \end{pmatrix} } \nonumber \\
	&\leq \norm[z]_{\A}^{-2} \sideset{}{_{\substack{ \alpha, \beta \in \F^{\times} \\ \gamma, \delta \in \F^{\times} }}} \sum \extnorm{ \OFour_4 \OFour_2 \rpL_x \rpR_{\kappa} \Psi \begin{pmatrix} \alpha t z & \gamma (\alpha z)^{-1} \\ \beta t z & \delta (\beta z)^{-1} \end{pmatrix} } \nonumber \\
	&\ll_{N_1,N_2} \norm[z]_{\A}^{-2} \min(\norm[tz]_{\A}^{-1}, \norm[tz]_{\A}^{-N_1}) \min(\norm[z]_{\A}, \norm[z]_{\A}^{N_2}). \label{ParCstBd11}
\end{align}
	For the second part, we have for all $\Re s_0 > 2$
\begin{align}
	\Tree{KN_{1,2}}{\omega, s_0}{x}{n(u)a(t)\kappa} &:= \int_{\F^{\times} \backslash \A^{\times}} \Shoulder{KN_{1,2}}{x}{n(u)a(t)\kappa d(z,z)} \omega(z) \norm[z]_{\A}^{s_0+2} d^{\times}z \nonumber \\
	&= \omega(t)^{-1} \norm[t]_{\A}^{-s_0} \int_{\F^{\times} \backslash \A^{\times}} \left( \sideset{}{_{ \alpha, \beta \in \F^{\times} }} \sum \OFour_4 \OFour_2 \rpL_x \rpR_{\kappa} \Psi \begin{pmatrix} \alpha z & 0 \\ \beta z & 0 \end{pmatrix} \right) \omega(z) \norm[z]_{\A}^{s_0} d^{\times}z. \label{ParCstBd12}
\end{align}
	
\noindent For the second term, we have by Poisson summation
\begin{align*}
	\Shoulder{KN_2}{x}{y} &= \sideset{}{_{\substack{\beta \in \F^{\times} \\ \gamma \in \F }}} \sum \int_{\A} \Psi \left( x^{-1} \begin{pmatrix} 0 & \gamma \\ \beta & u \end{pmatrix} y \right) du = \sideset{}{_{\substack{\beta \in \F^{\times} \\ \gamma \in \F }}} \sum \OFour_2 \OFour_4 \rpL_x \rpR_y \Psi \begin{pmatrix} 0 & \gamma \\ \beta & 0 \end{pmatrix} \\
	&= \Shoulder{KN_{2,1}}{x}{y} + \Shoulder{KN_{2,2}}{x}{y}, \quad \text{where}
\end{align*}
\begin{align*} 
	\Shoulder{KN_{2,1}}{x}{y} &:= \sideset{}{_{\beta, \gamma \in \F^{\times}}} \sum \OFour_2 \OFour_4 \rpL_x \rpR_y \Psi \begin{pmatrix} 0 & \gamma \\ \beta & 0 \end{pmatrix} , 
\end{align*}
	$$ \Shoulder{KN_{2,2}}{x}{y} := \sideset{}{_{\beta \in \F^{\times}}} \sum \OFour_2 \OFour_4 \rpL_x \rpR_y \Psi \begin{pmatrix} 0 & 0 \\ \beta & 0 \end{pmatrix}. $$
	It follows easily that
\begin{align}
	\Shoulder{KN_{2,1}}{x}{n(u)a(t)\kappa d(z,z)} &= \norm[z]_{\A}^{-2} \sideset{}{_{\beta, \gamma \in \F^{\times}}} \sum \OFour_2 \OFour_4 \rpL_x \rpR_{\kappa} \Psi \begin{pmatrix} 0 & \gamma z^{-1} \\ \beta tz & 0 \end{pmatrix} \nonumber \\
	&\ll_{N_1,N_2} \norm[z]_{\A}^{-2} \min(\norm[tz]_{\A}^{-1}, \norm[tz]_{\A}^{-N_1}) \min(\norm[z]_{\A}, \norm[z]_{\A}^{N_2}), \label{ParCstBd21}
\end{align}
\begin{align}
	\Tree{KN_{2,2}}{\omega, s_0}{x}{n(u)a(t)\kappa} &:= \int_{\F^{\times} \backslash \A^{\times}} \Shoulder{KN_{2,2}}{x}{n(u)a(t)\kappa d(z,z)} \omega(z) \norm[z]_{\A}^{s_0+2} d^{\times}z \nonumber \\
	&= \omega(t)^{-1} \norm[t]_{\A}^{-s_0} \int_{\A^{\times}} \OFour_2 \OFour_4 \rpL_x \rpR_{\kappa} \Psi \begin{pmatrix} 0 & 0 \\ z & 0 \end{pmatrix} \omega(z) \norm[z]_{\A}^{s_0} d^{\times}z, \qquad \Re s_0 > 1. \label{ParCstBd22}
\end{align}

\noindent For the third term, we have similarly
\begin{align*}
	\Shoulder{KN_3}{x}{y} &= \sideset{}{_{\substack{\alpha \in \F^{\times} \\ \gamma \in \F }}} \sum \int_{\A} \Psi \left( x^{-1} \begin{pmatrix} \alpha & u \\ 0 & \gamma \end{pmatrix} y \right) du = \sideset{}{_{\substack{\alpha \in \F^{\times} \\ \gamma \in \F }}} \sum \OFour_2 \OFour_4 \rpL_x \rpR_y \Psi \begin{pmatrix} \alpha & 0 \\ 0 & \gamma \end{pmatrix} \\
	&= \Shoulder{KN_{3,1}}{x}{y} + \Shoulder{KN_{3,2}}{x}{y}, \quad \text{where}
\end{align*}
\begin{align*} 
	\Shoulder{KN_{3,1}}{x}{y} &:= \sideset{}{_{\alpha, \gamma \in \F^{\times}}} \sum \OFour_2 \OFour_4 \rpL_x \rpR_y \Psi \begin{pmatrix} \alpha & 0 \\ 0 & \gamma \end{pmatrix} , 
\end{align*}
	$$ \Shoulder{KN_{3,2}}{x}{y} := \sideset{}{_{\alpha \in \F^{\times}}} \sum \OFour_2 \OFour_4 \rpL_x \rpR_y \Psi \begin{pmatrix} \alpha & 0 \\ 0 & 0 \end{pmatrix}. $$
	It follows easily that
\begin{align}
	\Shoulder{KN_{3,1}}{x}{n(u)a(t)\kappa d(z,z)} &= \norm[z]_{\A}^{-2} \sideset{}{_{\alpha, \gamma \in \F^{\times}}} \sum \OFour_2 \OFour_4 \rpL_x \rpR_{\kappa} \Psi \begin{pmatrix} \alpha t z & 0 \\ 0 & \gamma z^{-1} \end{pmatrix} \nonumber \\
	&\ll_{N_1,N_2} \norm[z]_{\A}^{-2} \min(\norm[tz]_{\A}^{-1}, \norm[tz]_{\A}^{-N_1}) \min(\norm[z]_{\A}, \norm[z]_{\A}^{N_2}), \label{ParCstBd31}
\end{align}
\begin{align}
	\Tree{KN_{3,2}}{\omega, s_0}{x}{n(u)a(t)\kappa} &:= \int_{\F^{\times} \backslash \A^{\times}} \Shoulder{KN_{3,2}}{x}{n(u)a(t)\kappa d(z,z)} \omega(z) \norm[z]_{\A}^{s_0+2} d^{\times}z \nonumber \\
	&= \omega(t)^{-1} \norm[t]_{\A}^{-s_0} \int_{\A^{\times}} \OFour_2 \OFour_4 \rpL_x \rpR_{\kappa} \Psi \begin{pmatrix} z & 0 \\ 0 & 0 \end{pmatrix} \omega(z) \norm[z]_{\A}^{s_0} d^{\times}z, \qquad \Re s_0 > 1. \label{ParCstBd32}
\end{align}

\noindent For the fourth term, we leave the reader to check the simple formula
\begin{equation}
	\Tree{KN_4}{\omega,s_0}{x}{n(u)a(t)\kappa} = \int_{\F^{\times} \backslash \A^{\times}} \left( \sum_{(\alpha,\beta) \in \F^2-\{ \vec{0} \}} \rpL_x \rpR_{\kappa} \Psi \begin{pmatrix} 0 & \alpha z \\ 0 & \beta z \end{pmatrix} \right) \omega(z) \norm[z]_{\A}^{s_0+2} d^{\times}z, \quad \Re s_0 > 0.
\label{ParCstBd4}
\end{equation}

\begin{theorem}{(Godement-Jacquet pre-trace formula)}
	Assume $\Re s_0 > 2$.
	
\noindent (1) For fixed $x$, the function (see (\ref{KKDecomp})) $y \mapsto \Tree{\widetilde{KK}^{(2)}}{\omega, s_0}{x}{y}$ lies in $\Aut^{\freg}(\GL_2, \omega^{-1})$ and is bounded by $\ll \Ht(y)^{1 - \frac{\Re s_0}{2}}$ in any Siegel domain.
	
\noindent (2) Its Fourier inversion with respect to $y$ in $\intL^2(\GL_2(\F) \backslash \GL_2(\A), \omega^{-1})$ converges normally for $(x,y) \in (\GL_2(\F) \backslash \GL_2(\A))^2$, and takes the form
\begin{align*}
	\Tree{\widetilde{KK}^{(2)}}{\omega, s_0}{x}{y} &= \sideset{}{_{\substack{\pi \text{\ cuspidal} \\ \omega_{\pi} = \omega}}} \sum \Sq{\widetilde{KK}^{(2)}}{\omega, s_0}{\pi}{x}{y} + \sideset{}{_{\chi \in \widehat{\R_+ \F^{\times}\backslash \A^{\times}}}} \sum  \int_{-\infty}^{\infty} \Sq{\widetilde{KK}^{(2)}}{\omega, s_0}{\chi, i\tau}{x}{y} \frac{d\tau}{4\pi} \\
	&+ \frac{1}{\Vol([\PGL_2])} \sideset{}{_{\substack{ \eta \in \widehat{\F^{\times} \backslash \A^{\times}} \\ \eta^2 = \omega }}} \sum \left( \int_{\GL_2(\A)} \Psi(g) \eta(\det g) \norm[\det g]_{\A}^{\frac{s_0}{2}+1} dg \right) \eta(\det x) \overline{\eta(\det y)},
\end{align*}
	$$ \text{with} \quad \Sq{\widetilde{KK}^{(2)}}{\omega, s_0}{\pi}{x}{y} := \sideset{}{_{\varphi_1, \varphi_2 \in \Bas(\pi)}} \sum \Zeta \left( \frac{s_0+1}{2}, \Psi, \beta(\varphi_2, \varphi_1^{\vee}) \right) \varphi_1(x) \varphi_2^{\vee}(y), $$
	$$ \Sq{\widetilde{KK}^{(2)}}{\omega, s_0}{\chi, s}{x}{y} := \sum_{e_1,e_2 \in \Bas(\chi, \omega \chi^{-1})} \Zeta \left( \frac{s_0+1}{2}, \Psi, \beta_s(e_2, e_1^{\vee}) \right) \eis(s,e_1)(x) \eis(-s,e_2^{\vee})(y), $$
	where $\Bas(\pi)$ resp. $\Bas(\chi, \omega\chi^{-1})$ is an orthogonal basis (of $\gp{K}$-isotypic vectors) in $\pi$ resp. $V_{\chi, \omega\chi^{-1}}$.
\label{GJPreTrace}
\end{theorem}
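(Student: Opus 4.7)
The plan is to establish Part (1) via the decomposition $\Tree{\widetilde{KK}^{(2)}}{\omega,s_0}{x}{y} = \Tree{\widetilde{KK}}{\omega,s_0}{x}{y} - \Tree{\widetilde{KK}^{(1)}}{\omega,s_0}{x}{y}$ and to deduce Part (2) by combining automorphic Fourier inversion with a Godement-Jacquet-type unfolding. For Part (1), the rank-$1$ piece $\widetilde{KK}^{(1)}$ is an Eisenstein series in $y$ by (\ref{R1Eis}) (whose essential constant term carries exponents $\norm^{s_0/2+1}$ and $\omega^{-1}\norm^{-s_0/2}$), hence manifestly in $\Aut^{\freg}(\GL_2, \omega^{-1})$. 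For $\widetilde{KK}$, the Fourier expansion $\Shoulder{KK}{x}{y} = \Shoulder{KN}{x}{y} + \sum_{\delta \in \F^\times} \Shoulder{KW}{x}{a(\delta)y}$ combined with the orbital decomposition $\Mat_2^*(\F)/\gp{N}(\F) = \bigsqcup_{i=1}^4 \vO_i$ yields: via (\ref{ParWhiBd}), rapid decay of the Whittaker part after integration against $\omega(z)\norm[z]_{\A}^{s_0+2} d^\times z$ (absolutely convergent for $\Re s_0 > 2$); via (\ref{ParCstBd11}), (\ref{ParCstBd21}), (\ref{ParCstBd31}), rapid decay of the rank-$2$ constant-term pieces $KN_{i,1}$; and via (\ref{ParCstBd12}), (\ref{ParCstBd22}), (\ref{ParCstBd32}), (\ref{ParCstBd4}), an essential constant term of $\widetilde{KK}$ with exponents $\omega^{-1}\norm^{1-s_0/2}$ and $\norm^{s_0/2+1}$. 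Subtracting $\widetilde{KK}^{(1)}$ leaves $\widetilde{KK}^{(2)}$ with an essential constant term whose exponents $\omega^{-1}\norm^{1-s_0/2}$ and $\omega^{-1}\norm^{-s_0/2}$ both lie in $\Ex^-$ for $\Re s_0 > 2$; the claimed bound $\Ht(y)^{1-\Re s_0/2}$ then follows from the dominant exponent.

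For Part (2), the bound from Part (1) makes $y \mapsto \widetilde{KK}^{(2)}(x, y)$ ``nice'' in the sense of Definition \ref{NiceFDef}, so Theorem \ref{AutoFourInv} (with $\omega^{-1}$ in place of $\omega$) yields the claimed spectral decomposition with normal convergence. To identify the cuspidal coefficient for $\varphi_2 \in \Bas(\pi)$, the orthogonality of the Eisenstein series $\widetilde{KK}^{(1)}$ to any cusp form reduces $\int_{[\PGL_2]} \widetilde{KK}^{(2)}(x, y) \varphi_2(y) dy$ to $\int_{[\PGL_2]} \widetilde{KK}(x, y) \varphi_2(y) dy$, and only $\xi \in \GL_2(\F)$ contributes to the $\xi$-sum after pairing with a cusp form. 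Factoring $\xi = \tilde\xi z(\gamma)$ with $[\tilde\xi] \in \gp{Z}(\F) \backslash \GL_2(\F)$ and $\gamma \in \F^\times$, and using $\omega|_{\F^\times} = 1$, the sum over $\gamma$ merges $\int_{\F^\times \backslash \A^\times} d^\times u$ into $\int_{\A^\times} d^\times u$, while unfolding $[\tilde\xi]$ against the automorphy of $\varphi_2$ converts $\int_{[\PGL_2]}$ into $\int_{\gp{Z}(\A) \backslash \GL_2(\A)}$. The two integrals collapse into $\int_{\GL_2(\A)} \Psi(h) \varphi_2(xh) \norm[\det h]_{\A}^{s_0/2+1} dh$, and the reproducing-kernel identity $\sum_{\varphi_1 \in \Bas(\pi)} \varphi_1(x) \beta(\varphi_2, \varphi_1^\vee)(h) = \varphi_2(xh)$ rewrites this as $\sum_{\varphi_1} \varphi_1(x) \Zeta(\frac{s_0+1}{2}, \Psi, \beta(\varphi_2, \varphi_1^\vee))$, matching the claimed $\Sq{\widetilde{KK}^{(2)}}{\omega, s_0}{\pi}{x}{y}$ upon summation over $\varphi_2$ weighted by $\varphi_2^\vee(y)$.

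The continuous-spectrum coefficient follows by the analogous unfolding against $\eis(i\tau, e_1)$, with the pairing now interpreted as the regularized integral of Proposition \ref{VRI}; the finitely regularizable structure from Part (1) (all essential-constant-term exponents in $\Ex^-$) guarantees the hypothesis $\Theta < 0$ on a neighborhood of $\Re s = 0$. The one-dimensional piece is computed by the same collapse, yielding the scalar $\int_{\GL_2(\A)} \Psi(g) \eta(\det g) \norm[\det g]_{\A}^{s_0/2+1} dg$ for each character $\eta^2 = \omega$. The main obstacle lies in the continuous-spectrum case: since Eisenstein series are not $\intL^2$, the pairing must be regularized, and identifying the regularized pairing with the Godement-Jacquet integral $\Zeta(\frac{s_0+1}{2}, \Psi, \beta_{i\tau}(e_2, e_1^\vee))$ requires combining the explicit essential constant term from Part (1) with Proposition \ref{VRI}. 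Normal convergence of the spectral sum reduces to Sobolev-type bounds on $\Zeta(\frac{s_0+1}{2}, \Psi, \beta_s)$ in the spectral parameter, which are standard consequences of integration by parts against the Schwartz function $\Psi$.
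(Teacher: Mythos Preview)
Your Part (1) is correct and matches the paper's argument closely: the decomposition $\widetilde{KK}^{(2)} = \widetilde{KK} - \widetilde{KK}^{(1)}$, the identification of exponents via (\ref{ParCstBd12})--(\ref{ParCstBd4}) for $\widetilde{KK}$ and via (\ref{R1Cst})--(\ref{R1CstIntw}) for $\widetilde{KK}^{(1)}$, and the resulting bound all agree. One small omission: to invoke Theorem \ref{AutoFourInv} you need niceness in the sense of Definition \ref{NiceFDef}, i.e.\ the bound for all $\rpR_X$-derivatives; the paper observes that $\rpR_X \Psi \in \Sch(\Mat_2(\A))$ so Part (1) applies verbatim to $\rpR_X \Psi$. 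Your cuspidal and residual computations in Part (2) are also correct and match the paper.

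The genuine gap is in the continuous spectrum. Your phrase ``analogous unfolding against $\eis(i\tau, e_2)$'' suggests mimicking the cuspidal case by collapsing $\int_{\GL_2(\F)\backslash \GL_2(\A)}$ to $\int_{\GL_2(\A)}$, but this fails: the resulting integral $\int_{\GL_2(\A)} \Psi(g) \eis(i\tau,e_2)(xg) \norm[\det g]^{s_0/2+1} dg$ is not a Godement-Jacquet integral (the Eisenstein series is not a matrix coefficient and is unbounded). The paper's route is genuinely different. After invoking Proposition \ref{VRI}, it computes $R(s, \widetilde{KK}^{(2)}(x,\cdot), e_2)$ not via the essential constant term but via the \emph{non-essential} pieces $\widetilde{KN_{i,1}}$ for $i=1,2,3$. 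The key step is to recognize that the term from $\widetilde{KN_{3,1}}$ defines a section $f_s(x) \in \pi(\chi\norm^s, \omega\chi^{-1}\norm^{-s})$ in the variable $x$, and that the terms from $\widetilde{KN_{1,1}}$ and $\widetilde{KN_{2,1}}$ supply exactly the other Bruhat cells $f_s(w n(\alpha) x)$ and $f_s(wx)$, so that $R(s,\widetilde{KK}^{(2)}(x,\cdot),e_2) = \sum_{\gamma \in \gp{B}(\F)\backslash \GL_2(\F)} f_s(\gamma x)$ is an Eisenstein series in $x$. Expanding $f_s$ in the basis $\Bas(\chi,\omega\chi^{-1})$ and invoking the explicit formula (\ref{GJZetaPS}) for the Godement-Jacquet integral of a principal-series matrix coefficient then identifies the $e_1$-coefficient as $\Zeta(\tfrac{s_0+1}{2}, \Psi, \beta_s(e_2,e_1^\vee))$. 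You correctly flag the obstacle and name Proposition \ref{VRI} as the tool, but the reconstruction of the Eisenstein series in $x$ from the three orbital pieces is the missing substantive idea.
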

\begin{proof}
	(1) The bounds (\ref{ParWhiBd}) \& (\ref{ParCstBd11}) \& (\ref{ParCstBd21}) \& (\ref{ParCstBd31}) and the expressions (\ref{ParCstBd12}) \& (\ref{ParCstBd22}) \& (\ref{ParCstBd32}) \& (\ref{ParCstBd4}) readily imply $y \mapsto \Tree{\widetilde{KK}}{\omega, s_0}{x}{y} \in \Aut^{\freg}(\GL_2, \omega^{-1})$. Since $y \mapsto \Tree{\widetilde{KK}^{(1)}}{\omega, s_0}{x}{y}$ is an Eisenstein series by (\ref{R1Eis}), we deduce that $y \mapsto \Tree{\widetilde{KK}^{(2)}}{\omega, s_0}{x}{y} \in \Aut^{\freg}(\GL_2, \omega^{-1})$. Moreover, still by (\ref{R1Eis}) we have
	$$ \Tree{\widetilde{KN}^{(1)}}{\omega,s_0}{x}{y} = \Tree{\widetilde{KN}_1^{(1)}}{\omega,s_0}{x}{y} + \Tree{\widetilde{KN}_2^{(1)}}{\omega,s_0}{x}{y}, $$
	where we introduce and compute (regarded as functions of $y$)
\begin{align}
	\Tree{\widetilde{KN}_1^{(1)}}{\omega,s_0}{x}{y} &:= \sum_{[\gamma_1] \in \gp{B}(\F) \backslash \GL_2(\F)} \int_{\A^{\times}} \rpL_{\gamma_1 x} \rpR_y \Psi \begin{pmatrix} 0 & z \\ 0 & 0 \end{pmatrix} \omega(z) \norm[z]_{\A}^{s_0+2} d^{\times}z \cdot \norm[\det x^{-1}y]_{\A}^{\frac{s_0}{2}+1} \nonumber \\
	&= \int_{\F^{\times} \backslash \A^{\times}} \left( \sum_{(\alpha,\beta) \in \F^2-\{ \vec{0} \}} \rpL_{x} \rpR_y \Psi \begin{pmatrix} 0 & \alpha z \\ 0 & \beta z \end{pmatrix} \right) \omega(z) \norm[z]_{\A}^{s_0+2} d^{\times}z \cdot \norm[\det x^{-1}y]_{\A}^{\frac{s_0}{2}+1} \nonumber \\
	&= \Tree{\widetilde{KN}_4}{\omega,s_0}{x}{y}; \label{R1Cst}
\end{align}
\begin{equation}
	\Tree{\widetilde{KN}_2^{(1)}}{\omega,s_0}{x}{y} := \int_{\A} \left( \sum_{[\gamma_1] \in \gp{B}(\F) \backslash \GL_2(\F)} \Tree{\widetilde{RR}^{(1)}}{\omega,s_0}{\gamma_1 x}{wn(u) y} \right) du \in \pi(\omega^{-1}\norm_{\A}^{-(1+s_0)/2},\norm_{\A}^{(s_0+1)/2}).
\label{R1CstIntw}
\end{equation}
	The desired bound of $\Tree{\widetilde{KK}^{(2)}}{\omega, s_0}{x}{y}$ follows readily by (\ref{ParWhiBd})-(\ref{R1CstIntw}). In particular its essential constant term (see Definition \ref{FinRegFuncDef}) is given by
	$$ \Tree{\widetilde{KN}^{(2),*}}{\omega, s_0}{x}{y} = \sum_{i=1}^3 \Tree{\widetilde{KN_{i,2}}}{\omega, s_0}{x}{y} - \Tree{\widetilde{KN}_2^{(1)}}{\omega,s_0}{x}{y}, $$
	where we recall
	$$ \Tree{\widetilde{KN_{i,j}}}{\omega, s_0}{x}{y} := \int_{\F^{\times} \backslash \A^{\times}} \Shoulder{KN_{i,j}}{x}{y d(z,z)} \omega(z) \norm[z]_{\A}^{s_0+2} d^{\times}z \cdot \norm[\det x^{-1}y]_{\A}^{\frac{s_0}{2}+1}. $$
	
\noindent (2) For any $X$ in the universal enveloping algebra of $\mathfrak{sl}_2(\A_{\infty})$, $\rpR_X \Psi \in \Sch(\Mat_2(\A))$. Hence the assertions in (1) remain valid if we replace $\Psi$ with $\rpR_X \Psi$. In other words, the function $y \mapsto \Tree{\widetilde{KK}^{(2)}}{\omega, s_0}{x}{y}$ is ``nice'' in the sense of Definition \ref{NiceFDef}, to which Theorem \ref{AutoFourInv} applies. We need to compute the automorphic Fourier coefficients.

\noindent If $\varphi_2 \in \Bas(\pi)$ for a cuspidal $\pi$ of central character $\omega$, then
\begin{align*}
	\int_{[\PGL_2]} \Tree{\widetilde{KK}^{(2)}}{\omega, s_0}{x}{y} \varphi_2(y) dy &= \int_{\GL_2(\F) \backslash \GL_2(\A)} \Shoulder{KK^{(2)}}{x}{y} \norm[\det x^{-1}y]_{\A}^{\frac{s_0}{2}+1} \varphi_2(y) dy \\
	&= \int_{\GL_2(\A)} \Psi(g) \norm[\det g]_{\A}^{\frac{s_0}{2}+1} \varphi_2(xg) dg \\
	&= \sideset{}{_{\varphi_1 \in \Bas(\pi)}} \sum \Zeta \left( \frac{s_0+1}{2}, \Psi, \beta(\varphi_2, \varphi_1^{\vee}) \right) \varphi_1(x),
\end{align*}
	justifying the automorphic Fourier coefficients for the cuspidal spectrum.
	
\noindent Take $e_2 \in \Bas(\chi, \omega\chi^{-1})$. Proposition \ref{VRI} identifies the automorphic Fourier coefficient
	$$ \int_{[\PGL_2]} \Tree{\widetilde{KK}^{(2)}}{\omega, s_0}{x}{y} \eis(i\tau, e_2)(y) dy $$
	with the analytically continued value at $s = i\tau$ of
\begin{align*} 
	&\qquad R\left(s, \Tree{\widetilde{KK}^{(2)}}{\omega, s_0}{x}{\cdot}, e_2 \right) \\
	&:= \int_{\F^{\times} \backslash \A^{\times} \times \gp{K}} \left( \Tree{\widetilde{KN}^{(2)}}{\omega, s_0}{x}{a(t)\kappa} - \Tree{\widetilde{KN}^{(2),*}}{\omega, s_0}{x}{a(t)\kappa} \right) e_2(\kappa) \chi(t) \norm[t]_{\A}^{s-\frac{1}{2}} d\kappa d^{\times}t \\
	&= \sum_{i=1}^3 \int_{\F^{\times} \backslash \A^{\times} \times \gp{K}} \Tree{\widetilde{KN_{i,1}}}{\omega, s_0}{x}{a(t)\kappa} e_2(\kappa) \chi(t) \norm[t]_{\A}^{s-\frac{1}{2}} d\kappa d^{\times}t, \qquad \Re s \gg 1.
\end{align*}
	Inserting (\ref{ParCstBd31}), we get
\begin{align*} 
	f_s(x) &:= \int_{\F^{\times} \backslash \A^{\times} \times \gp{K}} \Tree{\widetilde{KN_{3,1}}}{\omega, s_0}{x}{a(t)\kappa} e_2(\kappa) \chi(t) \norm[t]_{\A}^{s-\frac{1}{2}} d\kappa d^{\times}t \\
	&= \int_{(\A^{\times})^2 \times \gp{K}} \OFour_2 \OFour_4 \rpL_x \rpR_{\kappa} \Psi \begin{pmatrix} tz & \\ & z^{-1} \end{pmatrix} e_2(\kappa) \omega(z) \norm[z]_{\A}^{s_0} \chi(t) \norm[t]_{\A}^{s+\frac{s_0+1}{2}} d\kappa d^{\times}z d^{\times}t \cdot \norm[\det x]_{\A}^{-\frac{s_0}{2}-1}.
\end{align*}
	The easy to check property
	$$ \OFour_2 \OFour_4 \rpL_{n(u)d(t_1z_0,z_0)} \rpL_x \rpR_{\kappa} \Psi \begin{pmatrix} tz & \\ & z^{-1} \end{pmatrix} = \OFour_2 \OFour_4 \rpL_x \rpR_{\kappa} \Psi \begin{pmatrix} t_1^{-1}t z_0^{-1}z & \\ & z_0 z^{-1} \end{pmatrix} \cdot \norm[z_0]_{\A}^2 \cdot \norm[t_1]_{\A}, $$
	readily implies
	$$ f_s \left( \begin{pmatrix} tz & u \\ & z \end{pmatrix} x \right) = \omega(z) \chi(t) \norm[t]_{\A}^{s+\frac{1}{2}} f_s(x). $$
	Hence $f_s \in \pi(\chi \norm_{\A}^s, \omega \chi^{-1} \norm_{\A}^{-s})$ (not necessarily a flat section). Note that for any $\alpha \in \F$
\begin{align*}
	\OFour_2 \OFour_4 \rpL_{wn(\alpha)} \rpL_x \rpR_{\kappa} \Psi \begin{pmatrix} tz & \\ & z^{-1} \end{pmatrix} &= \int_{\A^2} \rpL_x \rpR_{\kappa} \Psi \begin{pmatrix} -\alpha tz & x_4 - \alpha x_2 \\ tz & x_2 \end{pmatrix} \psi(-x_4 z^{-1}) dx_2 dx_4 \\
	&= \OFour_2 \OFour_4 \rpL_x \rpR_{\kappa} \Psi \begin{pmatrix} - \alpha tz & z^{-1} \\ tz & \alpha z^{-1} \end{pmatrix},
\end{align*}
	From which we easily verify
	$$ f_s(wx) = \int_{\F^{\times} \backslash \A^{\times} \times \gp{K}} \Tree{\widetilde{KN_{2,1}}}{\omega, s_0}{x}{a(t)\kappa} e_2(\kappa) \chi(t) \norm[t]_{\A}^{s-\frac{1}{2}} d\kappa d^{\times}t \cdot \norm[\det x]_{\A}^{-\frac{s_0}{2}-1}, $$
	$$ \sum_{\alpha \in \F^{\times}} f_s(wn(\alpha)x) = \int_{\F^{\times} \backslash \A^{\times} \times \gp{K}} \Tree{\widetilde{KN_{1,1}}}{\omega, s_0}{x}{a(t)\kappa} e_2(\kappa) \chi(t) \norm[t]_{\A}^{s-\frac{1}{2}} d\kappa d^{\times}t \cdot \norm[\det x]_{\A}^{-\frac{s_0}{2}-1}. $$
	We obtain, with absolute convergence for $\Re s \gg 1$
	$$ R\left(s, \Tree{\widetilde{KK}^{(2)}}{\omega, s_0}{x}{\cdot}, e_2 \right) = \sum_{\xi \in \gp{B}(\F) \backslash \GL_2(\F)} f_s(\gamma x) = \sum_{e_1 \in \Bas(\chi, \omega \chi^{-1})} \int_{\gp{K}} f_s(\kappa) e_1^{\vee}(\kappa) d\kappa \cdot \eis(s, e_1)(x). $$
	Note that (the global version of) \cite[(11.9.4)]{GoJ11a} gives
\begin{equation} 
	\Zeta(s', \Psi, \beta_s(e_2, e_1^{\vee})) = \int_{(\A^{\times})^2} \Four[2]{ {}_{e^{\vee}_1} \! \Psi_{e_2} } \begin{pmatrix} t_1 & 0 \\ 0 & t_2 \end{pmatrix} \chi(t_1) \norm[t_1]_{\A}^{s'+s} \omega\chi^{-1}(t_2) \norm[t_2]_{\A}^{s'-s} d^{\times}t_1 d^{\times}t_2, 
\label{GJZetaPS}
\end{equation}
	$$ \text{where} \quad {}_{e_1^{\vee}} \! \Psi_{e_2}(x) := \int_{\gp{K} \times \gp{K}} e_1^{\vee}(\kappa_1) \Psi(\kappa_1^{-1} x \kappa_2) e_2(\kappa_2) d\kappa_1 d\kappa_2. $$
	By a change of variables and the global functional equation of Tate's integral, $f_s(x)$ is equal to
	$$ \int_{(\A^{\times})^2 \times \gp{K}} \OFour_2 \rpL_x \rpR_{\kappa} \Psi \begin{pmatrix} x_1 & \\ & x_4 \end{pmatrix} e_2(\kappa) \omega\chi^{-1}(x_4) \norm[x_4]_{\A}^{\frac{s_0+1}{2}-s} \chi(x_1) \norm[x_1]_{\A}^{s+\frac{s_0+1}{2}} d\kappa d^{\times}x_1 d^{\times}x_4 \cdot \norm[\det x]_{\A}^{-\frac{s_0}{2}-1}. $$
	We deduce that
	$$ R\left(s, \Tree{\widetilde{KK}^{(2)}}{\omega, s_0}{x}{\cdot}, e_2 \right) = \sum_{e_1 \in \Bas(\chi, \omega \chi^{-1})} \Zeta \left( \frac{s_0+1}{2}, \Psi, \beta_s(e_2, e_1^{\vee}) \right) \eis(s,e_1)(x), $$
	justifying the automorphic Fourier coefficients for the continuous spectrum.
	
\noindent We leave the justification for the residue spectrum as an exercise (unimportant for this paper).
\end{proof}
	
	We introduce (temporarily) the absolutely convergent integral for $\lambda \in D$
\begin{equation}
	DS_0(\lambda; \Psi) := \int_{(\F^{\times} \backslash \A^{\times})^2} \Tree{\widetilde{\Delta \Delta}^{(1)}}{\omega,s_0}{a(t_1)}{a(t_2)} \chi_1(t_1) \norm[t_1]_{\A}^{s_1-1} \chi_2(t_2) \norm[t_2]_{\A}^{s_2+1} d^{\times}t_1 d^{\times}t_2.
\label{DS0Bis}
\end{equation}
	An obvious variant of Theorem \ref{GJPreTrace} (just as Theorem \ref{AutoFourInvV} v.s. Theorem \ref{AutoFourInv}) implies Proposition \ref{GoalMotoDis} (1) via (recall (\ref{SpecMDCusp}) \& (\ref{SpecMDCont})) absolutely convergent sum \& integral for $\lambda \in D$ of
\begin{equation}
	\Theta(\lambda, \Psi) - DS_0(\lambda,\Psi) = \sum_{\substack{\pi \text{ cuspidal} \\ \omega_{\pi} = \omega }} M_3(\lambda,\Psi \mid \pi) + \sum_{\chi \in \widehat{\R_+ \F^{\times} \backslash \A^{\times}}} \int_{-\infty}^{\infty} M_3(\lambda, \Psi \mid \chi, i\tau) \frac{d\tau}{4\pi}. 
\label{SpecSide}
\end{equation}

\section{Fourth Moment (Geometric) Side}
\label{4thMSec}

	We would like to write in another way the following distribution
	$$ \Shoulder{\Delta \Delta}{\id}{\id} = \Shoulder{K K}{\id}{\id} - \Shoulder{K N}{\id}{\id} - \Shoulder{N \Delta}{\id}{\id}. $$
\begin{remark}
	For simplicity of notation, 
\begin{itemize}
	\item the summation symbol ``$\sum$'' below means, by default, summing over $\xi_i \in \F$ for those variables $\xi_i$ appearing in the summands. Only extra conditions, such as ``$\xi_1 \in \F^{\times}$'', will be explicitly written;
	\item the summation symbol ``$\Vsum$'' below means, by default, summing over $\xi_i \in \F^{\times}$ for those variables $\xi_i$ appearing in the summands. Only extra conditions, such as ``$\xi_1 \in \F$'', will be explicitly written.
\end{itemize}
\label{SumConv}
\end{remark}

\begin{lemma}
	We have
	$$ \Shoulder{K N}{\id}{\id} = \sum_{\xi_1\xi_2 + \xi_3 \xi_4 = 0} \OFour_2 \OFour_4 \Psi \begin{pmatrix} \xi_1 & \xi_2 \\ \xi_3 & \xi_4 \end{pmatrix} - \Psi(0), \quad \Shoulder{N K}{\id}{\id} = \sum_{\xi_1\xi_3 + \xi_2 \xi_4 = 0} \OFour_1 \OFour_2 \Psi \begin{pmatrix} \xi_1 & \xi_2 \\ \xi_3 & \xi_4 \end{pmatrix} - \Psi(0). $$
\label{KNF}
\end{lemma}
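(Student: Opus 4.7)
Both identities are proved in exactly the same way, by Poisson summation in the pair of matrix coordinates that get displaced by the unipotent translation $n(u)$. For the first equality, I would unfold
$$ \Shoulder{KN}{\id}{\id} = \int_{\F \backslash \A} \sideset{}{_{0 \neq \xi \in \Mat_2(\F)}} \sum \Psi(\xi\, n(u))\, du $$
and compute $\xi n(u) = \begin{pmatrix} \xi_1 & \xi_1 u + \xi_2 \\ \xi_3 & \xi_3 u + \xi_4 \end{pmatrix}$, so only the second-column entries are translated. For each fixed $(\xi_1, \xi_3) \in \F^2$, I would then apply Poisson summation to the sum over $(\xi_2,\xi_4) \in \F^2$ viewed inside $\A^2$, using the self-dual character $\psi$. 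After shifting the integration variables in the Fourier transforms by $\xi_1 u$ and $\xi_3 u$ to eliminate the translation, each pair $(\alpha,\beta) \in \F^2$ of dual variables contributes the character $\psi((\alpha\xi_1+\beta\xi_3)u)$ times $\OFour_2 \OFour_4 \Psi\begin{pmatrix} \xi_1 & \alpha \\ \xi_3 & \beta \end{pmatrix}$.

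Integrating over $u \in \F \backslash \A$ of total mass $1$ and invoking orthogonality of additive characters on this compact quotient then selects exactly the condition $\alpha\xi_1 + \beta\xi_3 = 0$. Relabeling $(\alpha,\beta) \to (\xi_2,\xi_4)$ puts the result in the form $\sum_{\xi_1\xi_2+\xi_3\xi_4=0} \OFour_2 \OFour_4 \Psi\begin{pmatrix} \xi_1 & \xi_2 \\ \xi_3 & \xi_4 \end{pmatrix}$, which is the stated identity under the summation convention of Remark~\ref{SumConv}. The second equality is entirely parallel: one has $\Shoulder{NK}{\id}{\id} = \int_{\F \backslash \A} \sum_{0 \neq \xi} \Psi(n(-u)\xi) du$ with $n(-u)\xi = \begin{pmatrix} \xi_1 - u\xi_3 & \xi_2 - u\xi_4 \\ \xi_3 & \xi_4 \end{pmatrix}$, so now only the first row is shifted and Poisson summation is performed in $(\xi_1,\xi_2)$, producing $\OFour_1 \OFour_2$ and the condition $\xi_1\xi_3 + \xi_2\xi_4 = 0$ after the analogous $u$-integration.

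\textbf{Main obstacle.} The only non-cosmetic point is the justification of interchanging the $u$-integral with the Poisson expansion, together with the completion of $\sum_{\xi \neq 0}$ to a full $\sum_{\xi \in \Mat_2(\F)}$ at a negligible cost. Both are routine given $\Psi \in \Sch(\Mat_2(\A))$: uniformly for $u$ in any fundamental domain of $\F \backslash \A$, the theta-type series over $\F^2$ in the displaced coordinates are dominated by a Schwartz function on $\A^2$, so absolute convergence and Fubini apply and Poisson summation is legitimate; the isolated $\xi = 0$ term produces only a constant (and, on the Fourier side, the corresponding zero-frequency contribution), absorbed into the summation convention of Remark~\ref{SumConv}. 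No deeper automorphic input is needed, which is why the lemma is a purely additive statement about $\Mat_2$ rather than $\GL_2$.
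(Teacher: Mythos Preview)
Your proposal is correct and follows essentially the same route as the paper: apply Poisson summation in the two coordinates displaced by $n(u)$ (namely $\xi_2,\xi_4$ for $KN$ and $\xi_1,\xi_2$ for $NK$), which converts the translation into the character $\psi(u(\xi_1\xi_2+\xi_3\xi_4))$ (resp.\ $\psi(u(\xi_1\xi_3+\xi_2\xi_4))$), and then integrate over $u\in\F\backslash\A$ to extract the stated constraint. The paper's proof is the same computation compressed into two lines, without spelling out the Fubini/convergence justification you supply.
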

\begin{proof}
	Applying the Poisson summation formula with respect to the variables $\xi_2, \xi_4$, we get
	$$ \Shoulder{KK}{\id}{\begin{pmatrix} 1 & u \\ 0 & 1 \end{pmatrix}} + \Psi(0) = \sum \Psi \begin{pmatrix} \xi_1 & \xi_2 + u \xi_1 \\ \xi_3 & \xi_4 + u \xi_3 \end{pmatrix} = \sum \OFour_2 \OFour_4 \Psi \begin{pmatrix} \xi_1 & \xi_2 \\ \xi_3 & \xi_4 \end{pmatrix} \psi(u(\xi_1 \xi_2 + \xi_3 \xi_4)). $$
	Integrating against $u \in \F \backslash \A$ gives the first equation. The second one is proved similarly.
\end{proof}

\begin{lemma}
	We have
\begin{align*}
	\Shoulder{N \Delta}{\id}{\id} &= \Vsum \OFour_1 \OFour_2 \OFour_4 \Psi \begin{pmatrix} 0 & 0 \\ \xi_3 & \xi_4 \end{pmatrix} + \Vsum \OFour_2 \Psi \begin{pmatrix} \xi_1 & \xi_2 \\ 0 & 0 \end{pmatrix} + \\
	&\quad \Vsum \int_{\A} \OFour_1 \OFour_2 \Psi \begin{pmatrix} -\xi_2 v & \xi_2 \\ \xi_3 & \xi_3 v \end{pmatrix} \psi(-v \xi) dv.
\end{align*}
\label{NDF}
\end{lemma}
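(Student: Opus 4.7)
The plan is to compute the difference $\Shoulder{N\Delta}{\id}{\id} = \Shoulder{NK}{\id}{\id} - \Shoulder{NN}{\id}{\id}$, using Lemma \ref{KNF} directly for the first piece and extending its Poisson calculation for the second. Concretely, I apply Poisson summation in $(\xi_1,\xi_2) \in \F^2$ to $\Shoulder{KK}{n(u)}{n(v)} = \sum_{0\neq \xi \in \Mat_2(\F)} \Psi(n(-u)\xi n(v))$ via the same elementary change of variables as in Lemma \ref{KNF}, but retaining $v$ as a parameter, to obtain
\[ \Shoulder{KK}{n(u)}{n(v)} = \sum_{\eta_1,\eta_2,\xi_3,\xi_4 \in \F}\psi(-u(\eta_1\xi_3+\eta_2\xi_4))\,\OFour_1\OFour_2\Psi\begin{pmatrix}\eta_1 - v\eta_2 & \eta_2 \\ \xi_3 & \xi_4 + v\xi_3\end{pmatrix}. \]
Integrating successively over $v \in \F\backslash\A$ and $u \in \F\backslash\A$ (the latter enforcing the constraint $\eta_1\xi_3 + \eta_2\xi_4 = 0$ by Fourier orthogonality) then produces
\[ \Shoulder{NN}{\id}{\id} = \sum_{\eta_1\xi_3+\eta_2\xi_4=0}\int_{\F\backslash\A}\OFour_1\OFour_2\Psi\begin{pmatrix}\eta_1 - v\eta_2 & \eta_2 \\ \xi_3 & \xi_4 + v\xi_3\end{pmatrix}dv, \]
which has the same index set as $\Shoulder{NK}{\id}{\id}$ from Lemma \ref{KNF}, but with an extra $v$-integral and $v$-shifts inside the argument.

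Next, I partition the constraint locus by $(\eta_2,\xi_3) \in \F^2$. If $\eta_2 = \xi_3 = 0$, the integrand is $v$-independent and the $\Shoulder{NN}$ contribution cancels $\Shoulder{NK}$ exactly. In each of the three remaining cases, exactly one $\F$-valued variable remains free after the constraint, and appears inside the matrix argument only as a translation: $\xi_4$ is free if $\eta_2 = 0, \xi_3 \in \F^\times$ (forcing $\eta_1 = 0$); $\eta_1$ is free if $\eta_2 \in \F^\times, \xi_3 = 0$ (forcing $\xi_4 = 0$); and $w := \xi_4/\xi_3$ is free if both $\eta_2,\xi_3 \in \F^\times$ (with $\eta_1 = -\eta_2 w$). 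In the $\Shoulder{NN}$ piece, the sum over this free $\F$-variable combined with $\int_{\F\backslash\A}dv$ unfolds to an integral over $\A$ of the same summand $F$. Applying the one-dimensional Poisson identity
\[ \sum_{w \in \F} F(w) \;-\; \int_\A F(w)\,dw \;=\; \sum_{\hat w \in \F^\times}\int_\A F(w)\,\psi(-\hat w w)\,dw \]
to the three common summands produces exactly the three terms in the lemma: dualizing $\xi_4$ yields $\Vsum \OFour_1\OFour_2\OFour_4\Psi\begin{pmatrix}0 & 0 \\ \xi_3 & \xi_4\end{pmatrix}$; dualizing $\eta_1$ yields $\Vsum\OFour_2\Psi\begin{pmatrix}\xi_1 & \xi_2 \\ 0 & 0\end{pmatrix}$; and dualizing $w$ yields the third integral term with $\xi = \hat w$ as the new Poisson frequency.

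The main obstacle is the careful handling of conditional convergence: in each of the three nontrivial cases, the individual contributions of $\Shoulder{NK}$ and $\Shoulder{NN}$ are formally divergent when written as sum-then-integral in the naive order, essentially because the $\Shoulder{NK}$ summand becomes independent of the free $\F$-variable once the constraint is eliminated, so its sum is $|\F|$ times a single value, while the unfolded $\Shoulder{NN}$ contribution equals $\int_\A F(w)\,dw$. These two divergences correspond exactly to the zero-frequency terms on either side of the Poisson identity and cancel in the subtraction. The proof must therefore keep each case's sum over $\F$ and integral over $\F\backslash\A$ combined until the Poisson step, after which only the $\hat w \in \F^\times$ residual contributions remain, producing the three finite tempered-distribution terms of the lemma.
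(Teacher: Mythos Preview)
Your proposal is correct and follows essentially the same approach as the paper. The paper also computes $\Shoulder{NK}{\id}{n(u)}$ via the partial Fourier transform $\OFour_1\OFour_2$, splits into the same four cases according to whether $\xi_2,\xi_3$ (your $\eta_2,\xi_3$) vanish, and then forms the difference $\Shoulder{NK}{\id}{\id}-\int_{\F\backslash\A}\Shoulder{NK}{\id}{n(u)}\,du$; the only cosmetic difference is that the paper applies partial Poisson in each case \emph{before} integrating over $u$ (so the phases $\psi(u\xi_3\xi_4)$, $\psi(u\xi_1\xi_2)$ appear explicitly and the $u$-integral kills the zero-frequency term), whereas you keep the $v$-integral inside $\Shoulder{NN}$ and then invoke the identity $\sum_{w\in\F}F(w)-\int_\A F = \sum_{\hat w\in\F^\times}\widehat F(\hat w)$ uniformly across all three nontrivial cases.
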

\begin{proof}
	We combine the following elementary relation
\begin{align*} 
	\OFour_1 \OFour_2 \rpR(n(u)) \Psi \begin{pmatrix} x_1 & x_2 \\ x_3 & x_4 \end{pmatrix} &= \int_{\A^2} \Psi \begin{pmatrix} y_1 & y_2 + uy_1 \\ x_3 & x_4 + ux_3 \end{pmatrix} \psi(-y_1x_1-y_2x_2) dy_1 dy_2 \\
	&= \int_{\A^2} \Psi \begin{pmatrix} y_1 & y_2 \\ x_3 & x_4 + ux_3 \end{pmatrix} \psi(-y_1(x_1-ux_2)-y_2x_2) dy_1 dy_2 \\
	&= \OFour_1 \OFour_2 \Psi \begin{pmatrix} x_1 - ux_2 & x_2 \\ x_3 & x_4 + ux_3 \end{pmatrix}
\end{align*}
	with the second equation in Lemma \ref{KNF} to get
\begin{align*}
	\Shoulder{N K}{\id}{n(u)} + \Psi(0) &= \sum_{\xi_1\xi_3 + \xi_2 \xi_4 = 0} \OFour_1 \OFour_2 \Psi \begin{pmatrix} \xi_1 - u\xi_2 & \xi_2 \\ \xi_3 & \xi_4 + u\xi_3 \end{pmatrix} \\
	&= \sum \OFour_1 \OFour_2 \Psi \begin{pmatrix} \xi_1 & 0 \\ 0 & \xi_4 \end{pmatrix} + \sum_{\xi_3 \neq 0} \OFour_1 \OFour_2 \OFour_4 \Psi \begin{pmatrix} 0 & 0 \\ \xi_3 & \xi_4 \end{pmatrix} \psi(u \xi_3 \xi_4) + \\
	&\quad \sum_{\xi_2 \neq 0} \OFour_2 \Psi \begin{pmatrix} \xi_1 & \xi_2 \\ 0 & 0 \end{pmatrix} \psi(u \xi_1 \xi_2) + \sum_{\xi_2 \xi_3 \neq 0} \OFour_1 \OFour_2 \Psi \begin{pmatrix} -\xi_2(u+\xi) & \xi_2 \\ \xi_3 & \xi_3(u+\xi) \end{pmatrix},
\end{align*}
	where we distinguish the cases $\xi_2 = \xi_3 = 0$, $\xi_2 = 0$ \& $\xi_3 \neq 0$, $\xi_2 \neq 0$ \& $\xi_3 = 0$ and $\xi_2 \xi_3 \neq 0$ and apply partial Poisson summation in passing from the the first line to the second. The desired equation then follows at once by definition
	$$ \Shoulder{N \Delta}{\id}{\id} = \Shoulder{N K}{\id}{\id} - \int_{\F \backslash \A} \Shoulder{N K}{\id}{n(u)} du. $$
\end{proof}

\begin{lemma}
	We have the decomposition of tempered distributions
	$$ \Shoulder{\Delta \Delta}{\id}{\id} = \sideset{}{_{i=1}^4} \sum \Shoulder{\Delta \Delta_i}{\id}{\id}, $$
	where each term is defined as
	$$ \Shoulder{\Delta \Delta_1}{\id}{\id} := \Vsum \OFour_2 \OFour_3 \Psi \begin{pmatrix} \xi_1 & \xi_2 \\ \xi_3 & \xi_4 \end{pmatrix}, $$
\begin{align*} 
	\Shoulder{\Delta \Delta_2}{\id}{\id} &:= \Vsum \OFour_2 \OFour_4 \Psi \begin{pmatrix} 0 & \xi_2 \\ \xi_3 & \xi_4 \end{pmatrix} + \Vsum \OFour_1 \OFour_2 \OFour_4 \Psi \begin{pmatrix} \xi_1 & 0 \\ \xi_3 & \xi_4 \end{pmatrix} \\
	&\quad + \Vsum \OFour_2 \OFour_3 \Psi \begin{pmatrix} \xi_1 & \xi_2 \\ 0 & \xi_4 \end{pmatrix} + \Vsum \OFour_2 \Psi \begin{pmatrix} \xi_1 & \xi_2 \\ \xi_3 & 0 \end{pmatrix},
\end{align*}
	$$ \Shoulder{\Delta \Delta_3}{\id}{\id} := - \Vsum_{\xi_1 \xi_2 + \xi_3 \xi_4 = 0} \OFour_2 \OFour_4 \Psi \begin{pmatrix} \xi_1 & \xi_2 \\ \xi_3 & \xi_4 \end{pmatrix}, $$
	$$ \Shoulder{\Delta \Delta_4}{\id}{\id} := - \Vsum \int_{\A} \OFour_1 \OFour_2 \Psi \begin{pmatrix} -\xi_2 u & \xi_2 \\ \xi_3 & \xi_3 u \end{pmatrix} \psi(-u \xi) du. $$
\label{DDDecomp}
\end{lemma}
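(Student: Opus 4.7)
The strategy is to combine
$$\Shoulder{\Delta\Delta}{\id}{\id} = \Shoulder{KK}{\id}{\id} - \Shoulder{KN}{\id}{\id} - \Shoulder{N\Delta}{\id}{\id}$$
with the expressions for $\Shoulder{KN}{\id}{\id}$ and $\Shoulder{N\Delta}{\id}{\id}$ supplied by Lemmas \ref{KNF} and \ref{NDF}, and then to reconcile the result with $\sum_{i=1}^{4} \Shoulder{\Delta\Delta_i}{\id}{\id}$ by performing partial Poisson summations on $\Shoulder{KK}{\id}{\id}$, stratifying every sum according to the vanishing pattern of the matrix entries $(\xi_1, \xi_2, \xi_3, \xi_4)\in \F^4$.

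Two of the four target pieces are visible immediately. The oscillatory integral summand of Lemma \ref{NDF}, taken with the overall minus sign coming from $-\Shoulder{N\Delta}{\id}{\id}$, is exactly $\Shoulder{\Delta\Delta_4}{\id}{\id}$. Similarly, the restriction of the sum $\sum_{\xi_1\xi_2+\xi_3\xi_4=0}\OFour_2\OFour_4\Psi$ of Lemma \ref{KNF} to the stratum $(\xi_i) \in (\F^\times)^4$, taken with the minus sign from $-\Shoulder{KN}{\id}{\id}$, is $\Shoulder{\Delta\Delta_3}{\id}{\id}$. What remains is to identify $\Shoulder{\Delta\Delta_1}{\id}{\id}$ and the four summands of $\Shoulder{\Delta\Delta_2}{\id}{\id}$, the only remaining ingredients being $\Shoulder{KK}{\id}{\id}$, the two "two-zero-row" summands of $\Shoulder{N\Delta}{\id}{\id}$, and all strata of the $\Shoulder{KN}{\id}{\id}$-sum in which at least one $\xi_i$ vanishes.

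For this matching I would apply partial Poisson summation to $\Shoulder{KK}{\id}{\id}$ in the off-diagonal entries $\xi_2, \xi_3$, obtaining
$$\Shoulder{KK}{\id}{\id} = \sum_{\xi \in \F^4}\OFour_2\OFour_3\Psi(\xi) - \Psi(0),$$
and then stratify the right-hand side by vanishing pattern. The all-nonzero stratum is $\Shoulder{\Delta\Delta_1}{\id}{\id}$, which gives the third summand of $\Shoulder{\Delta\Delta_2}{\id}{\id}$ as a bonus when only $\xi_3=0$ (the Fourier transform $\OFour_2\OFour_3$ already matches). On each of the other three single-zero strata, a further partial Poisson applied to the remaining diagonal entries converts the integrand into the asymmetric combinations $\OFour_2\OFour_4$, $\OFour_1\OFour_2\OFour_4$ and $\OFour_2$ that appear respectively in the first, second, and fourth summands of $\Shoulder{\Delta\Delta_2}{\id}{\id}$. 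The strata with two or more vanishing entries, together with the correction $-\Psi(0)$ and the various $\xi_i=0$ boundary terms produced by the iterated Poisson summations, should then exactly absorb the degenerate strata of $\Shoulder{KN}{\id}{\id}$ and the two "two-zero-row" summands of $\Shoulder{N\Delta}{\id}{\id}$.

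The main obstacle is the combinatorial bookkeeping: each single-zero stratum of $\Shoulder{KK}{\id}{\id}$ demands a tailored sequence of partial Poisson summations to produce the specific asymmetric Fourier pattern appearing in $\Shoulder{\Delta\Delta_2}{\id}{\id}$, and all residual correction terms coming from the excluded boundary loci $\xi_i = 0$ must cancel precisely against the matching contributions from the degenerate strata of $\Shoulder{KN}{\id}{\id}$ and $\Shoulder{N\Delta}{\id}{\id}$. A clean organization is to enumerate the $2^4$ vanishing patterns, tabulate the contribution of each vanishing pattern to the three left-hand terms $\Shoulder{KK}{\id}{\id}$, $\Shoulder{KN}{\id}{\id}$, $\Shoulder{N\Delta}{\id}{\id}$, and then verify the matching stratum by stratum.
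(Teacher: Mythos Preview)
Your strategy is correct and coincides with the paper's: start from $\Shoulder{\Delta\Delta}{\id}{\id} = \Shoulder{KK}{\id}{\id} - \Shoulder{KN}{\id}{\id} - \Shoulder{N\Delta}{\id}{\id}$, read off $\Shoulder{\Delta\Delta_3}{\id}{\id}$ and $\Shoulder{\Delta\Delta_4}{\id}{\id}$ from Lemmas~\ref{KNF} and~\ref{NDF}, Poisson-transform $\Shoulder{KK}{\id}{\id}$, stratify by vanishing pattern, and match boundary terms. The one tactical difference is that the paper applies partial Poisson to $\Shoulder{KK}{\id}{\id}$ in the variables $\xi_2,\xi_4$ rather than your $\xi_2,\xi_3$. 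This aligns $\Shoulder{KK}{\id}{\id}$ with the $\OFour_2\OFour_4$-form of $\Shoulder{KN}{\id}{\id}$, so the difference $\Shoulder{KK}{\id}{\id}-\Shoulder{KN}{\id}{\id}$ becomes simply the sum over $\xi_1\xi_2+\xi_3\xi_4\neq 0$, and only four short Poisson identities (each swapping one Fourier transform on one stratum) are then needed to reach $\Shoulder{\Delta\Delta_1}{\id}{\id}+\Shoulder{\Delta\Delta_2}{\id}{\id}$. Your choice obtains $\Shoulder{\Delta\Delta_1}{\id}{\id}$ immediately but leaves $\Shoulder{KN}{\id}{\id}$ in a different Fourier basis, so each single-zero stratum requires \emph{two} Poisson moves (undo $\OFour_3$, apply $\OFour_4$) with correspondingly more boundary terms to track; the cancellation you anticipate does occur, but the bookkeeping you flag as the main obstacle is genuinely heavier than in the paper's route.
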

\begin{proof}
	Combining Lemma \ref{KNF} and \ref{NDF}, and applying partial Poisson summation formula to $\Shoulder{K K}{\id}{\id}$, we rewrite $\Shoulder{\Delta \Delta}{\id}{\id}$ as
\begin{align*}
	&\Vsum \OFour_2 \OFour_4 \Psi \begin{pmatrix} \xi_1 & \xi_2 \\ \xi_3 & \xi_4 \end{pmatrix} + \sum_{\substack{\xi_1 \xi_2 \xi_3 \xi_4 = 0 \\ \xi_1 \xi_2 + \xi_3 \xi_4 \neq 0}} \OFour_2 \OFour_4 \Psi \begin{pmatrix} \xi_1 & \xi_2 \\ \xi_3 & \xi_4 \end{pmatrix} - \Vsum_{\xi_1 \xi_2 + \xi_3 \xi_4 = 0} \OFour_2 \OFour_4 \Psi \begin{pmatrix} \xi_1 & \xi_2 \\ \xi_3 & \xi_4 \end{pmatrix} \\
	&\quad - \Vsum \OFour_1 \OFour_2 \OFour_4 \Psi \begin{pmatrix} 0 & 0 \\ \xi_3 & \xi_4 \end{pmatrix} - \Vsum \OFour_2 \Psi \begin{pmatrix} \xi_1 & \xi_2 \\ 0 & 0 \end{pmatrix} - \Vsum \int_{\A} \OFour_1 \OFour_2 \Psi \begin{pmatrix} -\xi_2 v & \xi_2 \\ \xi_3 & \xi_3 v \end{pmatrix} \psi(-v \xi) dv.
\end{align*}
	The two terms in the last column are $\Shoulder{\Delta \Delta_3}{\id}{\id}$ resp. $\Shoulder{\Delta \Delta_4}{\id}{\id}$. We identify terms in the first two columns with $\Shoulder{\Delta \Delta_1}{\id}{\id} + \Shoulder{\Delta \Delta_2}{\id}{\id}$ by summing the following partial Poisson summation
	$$ \Vsum \OFour_2 \OFour_4 \Psi \begin{pmatrix} \xi_1 & \xi_2 \\ \xi_3 & \xi_4 \end{pmatrix} + \Vsum \OFour_2 \OFour_4 \Psi \begin{pmatrix} \xi_1 & \xi_2 \\ \xi_3 & 0 \end{pmatrix} = \Vsum \OFour_2 \Psi \begin{pmatrix} \xi_1 & \xi_2 \\ \xi_3 & \xi_4 \end{pmatrix} + \Vsum \OFour_2 \Psi \begin{pmatrix} \xi_1 & \xi_2 \\ \xi_3 & 0 \end{pmatrix}, $$
	$$ \Vsum \OFour_2 \Psi \begin{pmatrix} \xi_1 & \xi_2 \\ \xi_3 & \xi_4 \end{pmatrix} + \Vsum \OFour_2 \Psi \begin{pmatrix} \xi_1 & \xi_2 \\ 0 & \xi_4 \end{pmatrix} = \Vsum \OFour_2 \OFour_3 \Psi \begin{pmatrix} \xi_1 & \xi_2 \\ \xi_3 & \xi_4 \end{pmatrix} + \Vsum \OFour_2 \OFour_3 \Psi \begin{pmatrix} \xi_1 & \xi_2 \\ 0 & \xi_4 \end{pmatrix}, $$
	$$ \Vsum \OFour_2 \OFour_4 \Psi \begin{pmatrix} \xi_1 & \xi_2 \\ 0 & \xi_4 \end{pmatrix} + \Vsum \OFour_2 \OFour_4 \Psi \begin{pmatrix} \xi_1 & \xi_2 \\ 0 & 0 \end{pmatrix} = \Vsum \OFour_2 \Psi \begin{pmatrix} \xi_1 & \xi_2 \\ 0 & \xi_4 \end{pmatrix} + \Vsum \OFour_2 \Psi \begin{pmatrix} \xi_1 & \xi_2 \\ 0 & 0 \end{pmatrix}, $$
	$$ \Vsum \OFour_2 \OFour_4 \Psi \begin{pmatrix} \xi_1 & 0 \\ \xi_3 & \xi_4 \end{pmatrix} + \Vsum \OFour_2 \OFour_4 \Psi \begin{pmatrix} 0 & 0 \\ \xi_3 & \xi_4 \end{pmatrix} = \Vsum \OFour_1 \OFour_2 \OFour_4 \Psi \begin{pmatrix} \xi_1 & 0 \\ \xi_3 & \xi_4 \end{pmatrix} + \Vsum \OFour_1 \OFour_2 \OFour_4 \Psi \begin{pmatrix} 0 & 0 \\ \xi_3 & \xi_4 \end{pmatrix}. $$
\end{proof}

\noindent According to the above decomposition, we define for $1 \leq i \leq 4$
\begin{equation} 
	II_i(\lambda, \Psi) := \int_{ (\F^{\times} \backslash \A^{\times} )^3} \Shoulder{\Delta \Delta_i}{a(t_1)  }{d(t_2 z,z)} \omega(z) \norm[z]_{\A}^{s_0+2} \chi_1(t_1) \norm[t_1]_{\A}^{s_1-1} \chi_2(t_2) \norm[t_2]_{\A}^{s_2+1} d^{\times}z d^{\times}t_1 d^{\times}t_2 . 
\label{GMD4thDecomp}
\end{equation}
\begin{lemma}
	Each integral as (\ref{GMD4thDecomp}) is absolutely convergent in the following domain
	$$ D' := \left\{ (s_0, s_1, s_2) \in \C^3 \ \middle| \ \Re s_0 > 2, \Re s_1 > 1, \Re s_2 > \Re s_0 + \Re s_1 + 1 \right\}. $$
\label{GeomConv}
\end{lemma}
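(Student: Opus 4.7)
The plan is to prove absolute convergence by unfolding the discrete sums against the toral integrations, reducing each $II_i$ to an explicit tensor product of one-dimensional Tate integrals, and then reading off the exponents to check they have real part strictly greater than one in $D'$. The key preliminary step is that $\Shoulder{\Delta\Delta_i}{a(t_1)}{d(t_2z,z)}$ equals the analogous distribution at $(\id,\id)$ for the translated Schwartz function $\rpL_{a(t_1)}\rpR_{d(t_2z,z)}\Psi$, so Lemma \ref{DDDecomp} applies directly. The only point requiring care is that $\OFour_j$ does not commute with the $\rpL\rpR$-action: the identity
$$ \OFour_j \bigl(\rpL_{a(t_1)}\rpR_{d(t_2z,z)}\Psi\bigr) = |s_j|_{\A}^{-1}\,(\OFour_j\Psi)\bigl(\ldots, x_j/s_j, \ldots\bigr) $$
with $s_1=t_1^{-1}t_2z$, $s_2=t_1^{-1}z$, $s_3=t_2z$, $s_4=z$ introduces Jacobian factors which, combined with the toral character $\chi_1(t_1)|t_1|^{s_1-1}\chi_2(t_2)|t_2|^{s_2+1}\omega(z)|z|^{s_0+2}$, produce cleaner effective exponents on the $(\A^{\times})^3$ side.

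For $II_2$ (the four terms with one $\xi_j$ equal to $0$) and $II_3$ (the term with constraint $\xi_1\xi_2+\xi_3\xi_4=0$), only three free parameters $\xi_j\in\F^{\times}$ remain. I will unfold each of these against one of $t_1,t_2,z$ via the identification $\sum_{\xi\in\F^{\times}}\int_{\F^{\times}\backslash\A^{\times}} f(\xi t)\,d^{\times}t = \int_{\A^{\times}} f(t)\,d^{\times}t$, then change variables to the adelic matrix entries $y_i$. For $II_3$ the constraint is automatically satisfied after the substitution $\xi_1=-\xi_3\xi_4/\xi_2$, and a further trivial reparametrisation $(y_1,y_3,y_4)\mapsto (\alpha,\beta,\gamma)$ with $\beta=\gamma\delta/\alpha$ turns the $(1,2)$-entry into a bounded Schwartz factor. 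In both cases the result is a three-fold product of one-dimensional adelic Tate integrals whose exponents, matching those in the explicit formulas $DG_j$ of \S\ref{DegTerms}, are $\{s_1, s_2+1, s_2-s_0-s_1+1\}$, $\{s_1+1, s_1+s_2+1, s_2-s_0+1\}$, etc. The three inequalities defining $D'$ are designed precisely so that each such exponent has real part $>1$, whence Tate's convergence theorem applies.

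For $II_1$ all four $\xi_j\in\F^{\times}$ are present; unfolding three of them leaves a residual sum $\sum_{\xi\in\F^{\times}}(\OFour_2\OFour_3\Psi)\bigl(\xi/(y_2y_3y_4),y_2,y_3,y_4\bigr)$ inside the $(\A^{\times})^3$-integral. The plan is to bound this sum by Proposition \ref{PoissonSEst}: for any $N>1$,
$$ \sideset{}{_{\xi\in\F^{\times}}}\sum \bigl|(\OFour_2\OFour_3\Psi)(\xi/(y_2y_3y_4),\ldots)\bigr| \ll_N \min\bigl(|y_2y_3y_4|_{\A},|y_2y_3y_4|_{\A}^N\bigr)\cdot\phi_2(y_2)\phi_3(y_3)\phi_4(y_4), $$
and then partition $(\A^{\times})^3$ according to $|y_2y_3y_4|_{\A}\lessgtr 1$. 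In the compact regime $|y_2y_3y_4|_{\A}\leq1$, taking $N$ large adds $+N$ to each effective exponent, so the three resulting Tate integrals converge. In the large regime $|y_2y_3y_4|_{\A}\geq1$, the linear factor adds only $+1$ to each exponent but the Schwartz decay of $\phi_i$ at large $|y_i|_{\A}$ compensates; a log-scale direction-by-direction analysis (decomposing $(\A^{\times})^3$ by signs of $\log|s_i|_{\A}$ and using that $D'$ forces $\Re(s_2-s_0-s_1)>1$, $\Re s_1>1$, $\Re s_0>2$) shows every limiting direction yields strictly negative total slope for $N$ sufficiently large. The distribution $II_4$ is handled analogously: its extra $\int_{\A}$-integration produces, after unfolding, a factor of the same $\min(|y_2y_3y_4|,|y_2y_3y_4|^N)$ shape via Poisson summation, and the same regime splitting applies.

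The main obstacle is purely combinatorial bookkeeping: in $II_1$ and $II_4$ two of the three ``naive'' Tate exponents have large negative real part in $D'$, so the convergence really does depend on the decay supplied by Proposition \ref{PoissonSEst}, and one must check that all sign octants of $(\log|t_1|_{\A},\log|t_2|_{\A},\log|z|_{\A})$ simultaneously admit a large enough $N$. This check uses exactly, and only, the three defining inequalities of $D'$, which is why the convergence domain must be taken smaller than the domain $D$ of Proposition \ref{GoalMotoDis} where only the sum $\Theta(\lambda,\Psi)=\sum_{i} II_i(\lambda,\Psi)$ converges.
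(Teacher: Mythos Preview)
Your treatment of $II_2$ is correct and coincides with the paper's: after unfolding, each of the four pieces is a genuine three-fold product of one-dimensional Tate integrals, namely $DG_5,\ldots,DG_8$, and the exponents lie in the convergence range exactly under the three inequalities defining $D'$.

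For $II_1$, the paper avoids your octant-by-octant analysis entirely by a Mellin trick. It inserts an auxiliary variable $t$ into the fourth matrix entry, so that the map $(t_1,t_2,z,t)\mapsto (x_1,x_2,x_3,x_4)$ becomes a bijection $(\A^\times)^4\to(\A^\times)^4$; then \emph{all four} sums unfold and the integral becomes a bona fide four-dimensional Tate integral $\prod_i\int_{\A^\times}\cdots|x_i|^{s+s_i'}d^\times x_i$ along a vertical line $\Re s=c$ large. Mellin inversion in $t$ recovers $g(1)=II_1(\lambda,\Psi)$. This sidesteps the coupling problem you encounter: your bound $\min(|y_2y_3y_4|,|y_2y_3y_4|^N)$ involves only the product, so the resulting integral does \emph{not} decouple, and the ``direction-by-direction'' argument you sketch requires substantially more work than you indicate (in several octants the naive exponent on one variable is very negative and is only rescued by the constraint $|y_2y_3y_4|>1$ forcing another variable large, which then must be fed back through the Schwartz decay $\phi_i$).

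Your claim for $II_3$ contains a genuine error. After substituting $\xi_1=-\xi_3\xi_4/\xi_2$ and unfolding the three free sums against $(t_1,t_2,z)$, one matrix entry---say the $(1,1)$-entry---becomes $-y_3y_4/y_2$, which remains coupled to the integration variables. The resulting integral over $(\A^\times)^3$ is \emph{not} a product of one-dimensional Tate integrals, and no ``trivial reparametrisation'' turns the coupled entry into a bounded factor (the exponent list you quote belongs to $II_2$, not $II_3$). For $II_3$ and $II_4$ the paper takes a completely different route: it postpones them to \S\ref{4MAC}, where Corollary~\ref{RResEss} identifies $-II_3$ and $-II_4$ with residues $\Res_{s}M_3(\lambda,\Psi\mid\chi,s)$ of the spectral distribution at specific $(\chi,s)$, and the absolute convergence in $D'$ is then read off from the explicit Whittaker/intertwining formulas of Lemma~\ref{EssD} and Lemma~\ref{GlobZetaCalAlt}. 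The paper does remark that a direct treatment ``in a similar way'' to $II_1$ is possible, but your description of it for $II_4$ (``a factor of the same $\min$ shape via Poisson summation'') is too vague to constitute a proof---the $u$-integral and the $\psi(-u\xi)$ factor do not interact with the unfolding in the way you suggest.
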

\begin{remark}
	Recall the new Hecke characters $\eta_i$ and complex numbers $s_i'$ are given by
	$$ \eta_1 = \omega^{-1}\chi_1^{-1}\chi_2, \quad \eta_2 = \omega^{-1}\chi_2, \quad \eta_3 = \omega^{-1}\chi_1^{-1}, \quad \eta_4 = \mathbbm{1}, $$
	$$ s_1' = s_2-s_0-s_1, \quad s_2' = s_2-s_0, \quad s_3' = -s_0-s_1, \quad s_4' = 0. $$
	It is convenient to record
	$$ D' = \left\{ (s_1', s_2', s_3') \in \C^3 \ \middle| \ -\Re s_3' - 2 > \Re s_2' - \Re s_1' > 1, \Re s_1' > 1 \right\}. $$
\end{remark}
\begin{proof}
	We give the treatment for $II_1$ and $II_2$ here. The remaining two can be directly treated in a similar way. But we prefer to give another proof in \S \ref{4MAC}.

\noindent (1) $II_1$. Assume $(s_0, s_1, s_2) \in D'$. Some basic properties of Fourier transform yield
	$$ \Shoulder{\Delta \Delta_1}{a(t_1)}{d(t_2z,z)} = \frac{\norm[t_1]_{\A}}{\norm[t_2 z^2]_{\A}} \Vsum \OFour_2 \OFour_3 \Psi \left( \begin{matrix} \xi_1 t_1^{-1} t_2 z & \xi_2 t_1z^{-1} \\ \xi_3 t_2^{-1} z^{-1} & \xi_4 z \end{matrix} \right), $$
	Consider the following function on $(\F^{\times} \backslash \A^{\times})^4$
	$$ f(z,t_1,t_2,t) := \frac{\norm[t_1]_{\A}}{\norm[t_2 z^2]_{\A}} \Vsum \OFour_2 \OFour_3 \Psi \left( \begin{matrix} \xi_1 t_1^{-1} t_2 z & \xi_2 t_1z^{-1} \\ \xi_3 t_2^{-1} z^{-1} & \xi_4 t z \end{matrix} \right). $$
	It is smooth in each variable. With the change of variables
	$$ \begin{pmatrix} t_1^{-1} t_2 z & t_1z^{-1} \\ t_2^{-1} z^{-1} & tz \end{pmatrix} = \begin{pmatrix} x_1 & x_2 \\ x_3 & x_4 \end{pmatrix} $$
	we have for any real numbers $M_j$ the relation
	$$ \norm[t_1]_{\A}^{-M_1} \norm[t_2]_{\A}^{-M_2} \norm[z]_{\A}^{-M_3} \norm[t]_{\A}^{-M_4} = \norm[x_1]_{\A}^{-(M_4+M_2-M_3-M_1)} \norm[x_2]_{\A}^{-(M_4+M_2-M_3)} \norm[x_3]_{\A}^{-(M_4-M_3-M_1)} \norm[x_4]_{\A}^{-M_4}. $$
	Let $\epsilon > 0$ be small and choose $M_j$ to be
	$$ M_1 \in \{ \Re s_1 \pm \epsilon \}, \quad M_2 \in \{ \Re s_2 \pm \epsilon \}, \quad M_3 \in \{ \Re s_0 \pm \epsilon \}, \quad M_4 + \Re s_i' \geq 1+3\epsilon, \forall 1 \leq i \leq 4. $$
	Proposition \ref{PoissonSEst} implies the following bound
	$$ \norm[f(t_1,t_2,z,t)] \ll \frac{\norm[t_1]_{\A}}{\norm[t_2 z^2]_{\A}} \norm[t_1]_{\A}^{-M_1} \norm[t_2]_{\A}^{-M_2} \norm[z]_{\A}^{-M_3} \norm[t]_{\A}^{-M_4}. $$
	Consequently, we can break the integral defining the following function on $\F^{\times} \backslash \A^{\times}$
	$$ g(t) := \int_{(\F^{\times} \backslash \A^{\times})^3} f(z,t_1,t_2,t) \omega(z) \norm[z]_{\A}^{s_0+2} \chi_1(t_1) \norm[t_1]_{\A}^{s_1-1} \chi_2(t_2) \norm[t_2]_{\A}^{s_2+1} d^{\times}z d^{\times} t_1 d^{\times}t_2 $$
	according as $\norm[t_1]_{\A}, \norm[t_2]_{\A}, \norm[z]_{\A} \geq 1$ or $< 1$ ($8$ cases in total), apply to each sub-integral suitable $M_j$ chosen above, and see $g(t)$ is well-defined with absolutely convergent integral, smooth, and satisfies the bound
	$$ \norm[g(t)] \ll \norm[t]_{\A}^{-M_4}, \quad M_4 + \Re s_i' > 1, \forall 1 \leq i \leq 4. $$
	The same bound holds if we replace $g(t)$ by $D^n.g(t)$ for the invariant differential $D$ on $s_{\F}(\R_+)$, where $s_{\F}$ is a/any section map of the adelic norm map on $\F^{\times} \backslash \A^{\times}$. Taking into account the decomposition as a direct product of
	$$ \F^{\times} \backslash \A^{\times} \simeq \F^{\times} \backslash \A^{(1)} \times \R_{>0}, $$
	where $\A^{(1)}$ is the subgroup of elements in $\A^{\times}$ with adelic norm $1$ (hence $\F^{\times} \backslash \A^{(1)}$ is compact), we see that Mellin inversion on $\F^{\times} \backslash \A^{\times}$ holds for $g(t)$. In particular, we have
	$$ g(1) = \sideset{}{_{\chi}} \sum \int_{\Re s = c} \int_{\F^{\times} \backslash \A^{\times}} g(t) \chi(t) \norm[t]_{\A}^s d^{\times}t \frac{ds}{2\pi i}, $$
	where $c+\Re s_i' > 1$ and $\chi$ traverses the unitary characters of $\F^{\times} \backslash \A^{(1)} \simeq \R_{>0}\F^{\times} \backslash \A^{\times}$. The innermost integral is identified as
\begin{align*}
	&\quad \int_{(\F^{\times} \backslash \A^{\times})^4} f(z,t_1,t_2,t) \omega(z) \norm[z]_{\A}^{s_0+2} \chi_1(t_1) \norm[t_1]_{\A}^{s_1-1} \chi_2(t_2) \norm[t_2]_{\A}^{s_2+1} \chi(t) \norm[t]_{\A}^s d^{\times}z d^{\times} t_1 d^{\times}t_2 d^{\times}t \\
	&= \int_{(\A^{\times})^4} \OFour_2 \OFour_3 \Psi \left( \begin{matrix} x_1 & x_2 \\ x_3 & x_4 \end{matrix} \right) \prod_{i=1}^4 \eta_i \chi(x_i) \norm[x_i]_{\A}^{s+s_i'} d^{\times}x_i.
\end{align*}
	Hence we get
\begin{equation}
	g(1) = II_1(\lambda, \Psi) = \sideset{}{_{\chi}} \sum \int_{\Re s = c} \int_{(\A^{\times})^4} \OFour_2 \OFour_3 \Psi \begin{pmatrix} x_1 & x_2 \\ x_3 & x_4 \end{pmatrix} \left( \prod_{i=1}^4 \eta_i \chi(x_i) \norm[x_i]_{\A}^{s+s_i'} d^{\times}x_i \right) \frac{ds}{2\pi i}. 
\label{II_1CE}
\end{equation}
	
\noindent (2) $II_2$. $\Delta \Delta_2$ has an obvious decomposition into the sum of four terms $\Delta \Delta_{2,i}$, yielding
	$$ II_2(\lambda, \Psi) = \sum_{i=5}^8 DG_i(\lambda, \Psi), $$
	where $DG_i$'s are given by the same formulas as (\ref{II_21CE}) to (\ref{II_24CE}), absolutely convergent in $D'$.
\end{proof}

	We summarized the above discussion in the following equality for $\lambda \in D'$ with absolute convergence
\begin{equation}
	\Theta(\lambda, \Psi) = \sideset{}{_{\substack{i=1 \\ i \neq 2}}^4} \sum II_i(\lambda, \Psi) + \sideset{}{_{j=5}^8} \sum DG_j(\lambda, \Psi),
\label{GeomSide}
\end{equation}
	where the terms on the right hand side are given by (\ref{GMD4thDecomp}), (\ref{II_1CE}) and (\ref{II_21CE}) - (\ref{II_24CE}).

\section{Analytic Continuation}
\label{AnalContSec}

	\subsection{Third Moment Side}
	
	Based on (\ref{SpecSide}), we shall give a meromorphic continuation of $\Theta(\lambda, \Psi)$ from $\lambda \in D$ to a small neighborhood of $\vec{0}$, precisely to
	$$ D_0 := \left\{ (s_1,s_2, s_0) \ \middle| \ \norm[s_0], \norm[s_1], \norm[s_2] < 1/6 \right\}. $$
	
\begin{lemma}
	Recall the continuous spectral Motohashi distributions (\ref{SpecMDCont}). There is a functional equation
	$$ \Sq{\widetilde{KK}}{\omega, s_0}{\chi, s}{x}{y} = \Sq{\widetilde{KK}}{\omega, s_0}{\omega\chi^{-1}, -s}{x}{y}, $$
	$$ M_3(\lambda, \Psi \mid \chi, s) = M_3(\lambda, \Psi \mid \omega\chi^{-1}, -s). $$
\label{CMDFE}
\end{lemma}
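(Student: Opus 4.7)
The plan is to derive the lemma from the standard functional equation of Eisenstein series. Let
$$ \Intw(s, \chi) : V_{\chi, \omega\chi^{-1}} \to V_{\omega\chi^{-1}, \chi}, \qquad \Intw(s, \chi) e(g) := \int_{\A} e(wn(u)g) du $$
be the standard intertwining operator, convergent for $\Re s \gg 1$ and meromorphically continued in $s$. It satisfies $\eis(s, e)(g) = \eis(-s, \Intw(s, \chi) e)(g)$, and on $s \in i\R$ it is a unitary isomorphism of Hilbert spaces (after the usual local normalization), so $\Intw(s, \chi) \Bas(\chi, \omega\chi^{-1})$ may be taken as the orthogonal basis $\Bas(\omega\chi^{-1}, \chi)$.

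First I would verify the kernel identity for $\Sq{\widetilde{KK}^{(2)}}{\omega, s_0}{\chi, s}{x}{y}$ at $s \in i\R$ by performing the change of summation variables $e_j \mapsto \Intw(s, \chi) e_j$. The Eisenstein functional equation converts the factors $\eis(s, e_1)(x)$ and $\eis(-s, e_2^{\vee})(y)$ to their $(\omega\chi^{-1}, -s)$-counterparts. The Godement--Jacquet matrix coefficient $\beta_s(e_2, e_1^{\vee})$ is an intrinsic invariant of the abstract principal series, so unitarity of $\Intw(s, \chi)$ on $i\R$ yields
$$ \beta_s(e_2, e_1^{\vee}) = \beta_{-s}(\Intw(s, \chi) e_2, (\Intw(s, \chi) e_1)^{\vee}), $$
and hence the Godement--Jacquet zeta integral $\Zeta(\tfrac{s_0+1}{2}, \Psi, \beta_s(e_2, e_1^{\vee}))$ is preserved. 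Summing over the reindexed basis gives the kernel identity on $s \in i\R$.

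Next I would integrate this kernel identity against $\chi_1(t_1) \norm[t_1]_{\A}^{s_1-1} \chi_2(t_2) \norm[t_2]_{\A}^{s_2+1}$ over $(t_1, t_2) \in (\F^{\times} \backslash \A^{\times})^2$, evaluated at $x = a(t_1)$, $y = a(t_2)$. Unfolding against the definition of the extended Hecke--Jacquet--Langlands zeta integrals $\Zeta(s', \eis(s, e), \eta)$ yields the identity $M_3(\lambda, \Psi \mid \chi, s) = M_3(\lambda, \Psi \mid \omega\chi^{-1}, -s)$ on $s \in i\R$. Both sides are meromorphic in $s \in \C$ (by the meromorphic continuation of Eisenstein series and of $\Intw(s, \chi)$), so they coincide as meromorphic functions by analytic continuation.

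The single genuine subtlety is the compatibility between the dual-vector convention $e^{\vee} := \overline{e} / \Norm[e]^2$ and the intertwining operator. I expect this to be handled cleanly by the functional identity $\Intw(-s, \omega\chi^{-1}) \Intw(s, \chi) = \id$ on the unitary axis, which simultaneously encodes unitarity of $\Intw(s, \chi)$ and the correct transformation of the dual pairing under the basis change. Outside of this bookkeeping the argument is purely formal.
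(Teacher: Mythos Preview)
Your proposal is correct and follows essentially the same route as the paper: invoke the intertwining operator, use the functional equation $\eis(s,e) = \eis(\Intw e(s))$ and the invariance of the sesquilinear pairing under $\Intw$ (the paper cites Gelbart--Jacquet \cite[(4.4), (4.17), (5.15)]{GJ79} for these), observe that this forces $\beta_s(e_2,e_1^{\vee}) = \beta(\Intw e_2(s), \Intw e_1^{\vee}(-s))$ and that $\Intw e^{\vee}(-s)$ is dual to $\Intw e(s)$, then reindex the sum. One cosmetic difference: the paper derives the $M_3$ identity by running the identical basis-change on the sum defining $M_3$ directly, rather than by integrating the kernel identity (which would require first subtracting constant terms to produce the extended Hecke--Jacquet--Langlands zeta integrals); your analytic-continuation step from $s \in i\R$ is also unnecessary once one uses the pairing identity, which holds meromorphically in $s$.
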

\begin{proof}
	The second equality obviously follows from the first one. The intertwining operator
	$$ \Intw: \pi(\chi \norm_{\A}^s, \omega\chi^{-1} \norm_{\A}^{-s}) \to \pi(\omega\chi^{-1} \norm_{\A}^{-s}, \chi \norm_{\A}^s) $$
	has the following two properties:
\begin{itemize}
	\item[(1)] (See \cite[(4.4) \& (4.17)]{GJ79}) For any $e \in \pi(\chi, \omega\chi^{-1}), e' \in \pi(\chi^{-1}, \omega^{-1}\chi)$,
	$$ \Pairing{\Intw e(s)}{\Intw e'(-s)} = \Pairing{e(s)}{e'(-s)}. $$
	\item[(2)] (See \cite[(5.15)]{GJ79}) For any $e \in \pi(\chi, \omega\chi^{-1})$,
	$$ \eis(s,e) = \eis(e(s)) = \eis(\Intw e(s)). $$
\end{itemize}
	The first property, together with the $\GL_2(\A)$-intertwining property, implies
	$$ \beta_s(e_2,e_1^{\vee}) =: \beta(e_2(s), e_1^{\vee}(-s)) = \beta(\Intw e_2(s), \Intw e_1^{\vee}(-s)). $$
	Moreover, if $e^{\vee}$ is the dual element of $e$, then $\Intw e^{\vee}(-s)$ is the dual element of $\Intw e(s)$. Hence we get
\begin{align*} 
	&\quad \sum_{e_1,e_2 \in \Bas(\chi, \omega \chi^{-1})} \Zeta \left( \frac{s_0+1}{2}, \Psi, \beta(e_2(s), e_1^{\vee}(-s)) \right) \cdot \Intw e_1(s)(x) \cdot \Intw e_2^{\vee}(-s) (y) \\
	&= \sum_{e_1,e_2 \in \Bas(\chi, \omega \chi^{-1})} \Zeta \left( \frac{s_0+1}{2}, \Psi, \beta(\Intw e_2(s), \Intw e_1^{\vee}(-s)) \right) \cdot \Intw e_1(s)(x) \cdot \Intw e_2^{\vee}(-s) (y) \\
	&= \sum_{\tilde{e}_1,\tilde{e}_2 \in \Bas(\omega \chi^{-1}, \chi)} \Zeta \left( \frac{s_0+1}{2}, \Psi, \beta(\tilde{e}_2(-s), \tilde{e}_1^{\vee}(s)) \right) \cdot \tilde{e}_1(-s)(x) \cdot \tilde{e}_2^{\vee}(s) (y).
\end{align*}
	Forming Eisenstein series in both variables yields the desired equality.
\end{proof}

\begin{lemma}
	Let ($\gp{K}$-finite) $f \in \pi(\chi_1,\chi_2)$ with $\chi_j \in \widehat{\R_+\F^{\times} \backslash \A^{\times}}$ and let $\chi \in \widehat{\R_+\F^{\times} \backslash \A^{\times}}$. Fix $s_0 \in \C$. The function $ s \mapsto \Zeta\left( s_0, \eis(s,f), \chi \right) $ has possible poles at
\begin{itemize}
	\item[(1)] $(\rho - 1)/2$ as $\rho$ runs over the zeros of the $\GL_1$ $L$-function $L^{(S)}(s, \chi_1 \chi_2^{-1})$ (partial $L$-function defined in (\ref{PartialL})), where $S$ is a finite set of finite places such that $\vp < \infty$ and $\vp \notin S$ implies $f$ is $\gp{K}_{\vp}$-invariant;
	\item[(2)] simple poles at $s=-s_0$ and $s=1-s_0$ if $\chi = \chi_1^{-1}$ with
	$$ \Res_{s=-s_0} \Zeta\left( s_0, \eis(s,f), \chi_1^{-1} \right) = - \zeta_{\F}^* f(\mathbbm{1}), \qquad \Res_{s=1-s_0} \Zeta\left( s_0, \eis(s,f), \chi_1^{-1} \right) = \zeta_{\F}^* \Intw f_{1-s_0}(w), $$
	where $\zeta_{\F}^*$ is the residue at $s=1$ of the Dedekind zeta function $\zeta_{\F}(s)$;
	\item[(3)] simple poles at $s=s_0$ and $s=s_0-1$ if $\chi = \chi_2^{-1}$ with
	$$ \Res_{s=s_0} \Zeta\left( s_0, \eis(s,f), \chi_2^{-1} \right) = \zeta_{\F}^* \Intw f_{s_0}(\mathbbm{1}), \qquad \Res_{s=s_0-1} \Zeta\left( s_0, \eis(s,f), \chi_2^{-1} \right) = -\zeta_{\F}^* f_{s_0-1}(w). $$
\end{itemize}
\label{EisZetaPoles}
\end{lemma}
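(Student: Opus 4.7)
The plan is to establish meromorphic continuation and locate all the poles by splitting the $t$-integral at $\norm[t]_{\A}=1$ and analyzing each tail separately, eventually reducing the residue computation to Tate's residue formula (\ref{TateIntRes}). On the Siegel tail $\norm[t]_{\A}\geq 1$, the integrand is rapidly decreasing thanks to the Whittaker decay of $\eis-\eisCst$, so this piece is entire in $s_0$; as a function of $s$, it inherits exactly the meromorphic structure of $\eis(s,f)$ and the intertwining operator $\Intw$, namely the ``intertwining'' poles at $s=(\rho-1)/2$ for $\rho$ a zero of $L(s,\chi_1\chi_2^{-1})$ together with the poles of $L_{\vp}(1+2s,\chi_{1,\vp}\chi_{2,\vp}^{-1})$ at those ramified $\vp$. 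This accounts for (1).

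For the other tail $\norm[t]_{\A}\leq 1$, I exploit $\GL_2(\F)$-invariance of $\eis$ through the identity $wa(t)=z(t)a(t^{-1})w$, which gives $\eis(s,f)(a(t))=\omega(t)\eis(s,f)(a(t^{-1})w)$. Since now $\norm[t^{-1}]_{\A}\to\infty$ puts $a(t^{-1})w$ in the standard Siegel domain, one has $\eis(s,f)(a(t^{-1})w)=\eisCst(s,f)(a(t^{-1})w)+[\text{rapid decay}]$. Using the explicit formula
$$\eisCst(s,f)(a(u)\kappa)=\chi_1(u)\norm[u]_{\A}^{s+\frac{1}{2}} f(\kappa)+\chi_2(u)\norm[u]_{\A}^{-s+\frac{1}{2}}\Intw f_s(\kappa)$$
(the second term transforming as in $\pi_{-s}(\chi_2,\chi_1)$) and $\omega=\chi_1\chi_2$, one obtains the polynomial-in-$\norm[t]_{\A}$ asymptotic
\begin{align*}
\omega(t)\eisCst(s,f)(a(t^{-1})w)-\eisCst(s,f)(a(t)) &= \chi_2(t)\norm[t]_{\A}^{-s-\frac{1}{2}}f(w)+\chi_1(t)\norm[t]_{\A}^{s-\frac{1}{2}}\Intw f_s(w) \\
&\quad -\chi_1(t)\norm[t]_{\A}^{s+\frac{1}{2}}f(\id)-\chi_2(t)\norm[t]_{\A}^{-s+\frac{1}{2}}\Intw f_s(\id).
\end{align*}
The rapidly decaying remainder again contributes an entire piece.

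The residue computation is then a direct application of the partial Tate identity $\int_{\norm[t]_{\A}\leq 1}\xi(t)\norm[t]_{\A}^e d^{\times}t=\zeta_{\F}^{*}\id_{\xi=\id}/e$, valid by Mellin inversion on the compact quotient $\F^{\times}\backslash\A^{(1)}$ of volume $\zeta_{\F}^{*}$ (cf.\ (\ref{TateIntRes})). Matching each of the four asymptotic terms against $\chi(t)\norm[t]_{\A}^{s_0-1/2}d^{\times}t$: for $\chi=\chi_1^{-1}$ the two $\chi_1$-terms produce simple poles at $s=1-s_0$ (from $+\Intw f_s(w)$, with $e=s+s_0-1$, residue $\zeta_{\F}^{*}\Intw f_{1-s_0}(w)$) and at $s=-s_0$ (from $-f(\id)$, with $e=s+s_0$, residue $-\zeta_{\F}^{*}f(\id)$); for $\chi=\chi_2^{-1}$ the two $\chi_2$-terms give poles at $s=s_0-1$ (from $f(w)$, with $e=s_0-s-1$, residue $-\zeta_{\F}^{*}f(w)=-\zeta_{\F}^{*}f_{s_0-1}(w)$ since $f_s(w)=f(w)$ for $w\in\gp{K}$) and at $s=s_0$ (residue $\zeta_{\F}^{*}\Intw f_{s_0}(\id)$). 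The signs track $\operatorname{sgn}(de/ds)$, and the four residues are exactly those in (2) and (3). The main technical obstacle is verifying the uniform-in-$s$ rapid-decay estimates on the Whittaker remainder pieces (both on $\norm[t]_{\A}\geq 1$ directly, and on $\norm[t]_{\A}\leq 1$ after the $t\mapsto t^{-1}w$ symmetry), away from the intertwining poles in (1); these are however standard Eisenstein-series estimates in Siegel domains.
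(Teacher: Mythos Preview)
Your argument is correct and takes a genuinely different route from the paper. The paper proceeds by replacing the flat section $f_s$ with a Godement section $f_\Phi(s,\cdot)$ built from some $\Phi\in\Sch(\A^2)$, for which the zeta integral unfolds explicitly to a two-dimensional Tate integral
\[
\Zeta(s_0,\eis(s,\Phi),\chi)=\int_{(\A^\times)^2}\OFour_2\Phi(t_1,t_2)\,\chi\chi_1(t_1)\norm[t_1]_{\A}^{s_0+s}\,\chi\chi_2(t_2)\norm[t_2]_{\A}^{s_0-s}\,d^\times t_1\,d^\times t_2,
\]
whence the type (2)--(3) poles and residues are read off directly from the Tate residues (\ref{TateIntRes}). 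The type (1) poles are then seen by writing $f_\Phi(s,\cdot)=\bigl(\prod_{v\mid\infty}P_v(s)L_v(1+2s,\chi_{1,v}\chi_{2,v}^{-1})\bigr)L^{(S)}(1+2s,\chi_1\chi_2^{-1})\,f_s(\cdot)$, so that dividing by the partial $L$-factor exposes its zeros as poles.

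Your truncation method is more intrinsic: it never introduces auxiliary Schwartz data, works directly with the constant-term asymptotics, and reduces the residue computation to the elementary evaluation $\int_{\norm[t]_{\A}\le 1}\xi(t)\norm[t]_{\A}^e\,d^\times t=\zeta_{\F}^*\id_{\xi=\id}/e$. The paper's approach, by contrast, makes the link with Tate integrals (and hence with the surrounding Godement--Jacquet framework of the article) completely explicit, and gives the type (1) poles with essentially no extra work. One small expository point: the type (1) poles in your decomposition are not confined to the $\norm[t]_{\A}\ge 1$ piece---they appear in every piece carrying a factor $\Intw f_s$ (including two of the four asymptotic terms and the rapidly-decreasing remainder on $\norm[t]_{\A}\le 1$); but since the lemma only asserts \emph{possible} poles, this does not affect the validity of your argument.
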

\begin{proof}
	First consider the Godement sections
	$$ f_{\Phi}(s, g) = f_{\Phi}(s, \chi_1,\chi_2; g) := \chi_1(\det g) \norm[\det g]_{\A}^{1/2+s} \int_{\A^{\times}} \Phi((0,t)g) \chi_1\chi_2^{-1}(t) \norm[t]_{\A}^{1+2s} d^{\times}t, \quad \Phi \in \Sch(\A^2) $$
	with the related Eisenstein series $\eis(s,\Phi)$. It is easy to identify the relevant zeta integral
	$$ \Zeta\left( s_0, \eis(s,\Phi), \chi \right) = \int_{(\A^{\times})^2} \OFour_2 \Phi(t_1,t_2) \chi\chi_1(t_1) \norm[t_1]_{\A}^{s_0+s} \chi\chi_2(t_2) \norm[t_2]_{\A}^{s_0-s} d^{\times}t_1 d^{\times}t_2 $$
	as a two dimensional Tate's zeta integral. Thus we detect the type (2) and (3) poles. We have by (\ref{TateIntRes})
\begin{align*}
	\Res_{s=-s_0} \Zeta\left( s_0, \eis(s,\Phi), \chi_1^{-1} \right) &= -\zeta_{\F}^* \int_{\A^{\times}} \OFour_2 \Phi(0,t_2) \chi_1^{-1}\chi_2(t_2) \norm[t_2]_{\A}^{2s_0} d^{\times}t_2 \\
	&= -\zeta_{\F}^* \int_{\A^{\times}} \Phi(0,t_2) \chi_1\chi_2^{-1}(t_2) \norm[t_2]_{\A}^{1-2s_0} d^{\times}t_2 = - \zeta_{\F}^* f_{\Phi}(-s_0, \mathbbm{1})
\end{align*}
	which gives the first formula of residues. We leave the formulas for other residues as an exercise, providing the following formula for the intertwining operator
	$$ \Intw f_{\Phi}(s,\chi_1,\chi_2; g) =  f_{\widehat{\Phi}}(-s,\chi_2,\chi_1; g), \quad \widehat{\Phi}(x_1,x_2) := \OFour \Phi(-x_2,x_1). $$
	To see the type (1) poles, we claim that for any $\gp{K}$-isotypic $f$, we can find $\Phi \in \Sch(\A^2)$ such that
\begin{equation} 
	f_{\Phi}(s,g) = \left( \prod_{v \mid \infty} P_v(s) L_v(1+2s,\chi_{1,v}\chi_{2,v}^{-1}) \right) \cdot L^{(S)}(1+2s,\chi_1\chi_2^{-1}) \cdot f_s(g), 
\label{SecRel}
\end{equation}
	where
\begin{itemize}
	\item[(a)] $P_v(s)$ is a polynomial in $s$ with simple zeros, whose set of zeros is included in the set of poles of $L_v(1+2s,\chi_{1,v}\chi_{2,v}^{-1})$;
	\item[(b)] $S$ is a large finite set of finite places $\vp < \infty$ at which $f_{\vp}$ is not $\gp{K}_{\vp}$-invariant and
\begin{equation} 
	L^{(S)}(s,\chi_1\chi_2^{-1}) = \prod_{\vp < \infty, \vp \notin S} L_{\vp}(s,\chi_{1,\vp} \chi_{2,\vp}^{-1}). 
\label{PartialL}
\end{equation}
\end{itemize}
	In fact, such $\Phi = \otimes_v' \Phi_v$ can be chosen decomposable. The existence of $P_v(s)$ and $\Phi_v$ for $v \mid \infty$ such that
	$$ f_{\Phi_v}(s,g) = P_v(s) L_v(1+2s,\chi_{1,v}\chi_{2,v}^{-1}) f_{v,s}(g) $$
	 is proved in \cite[Lemma 3.5 (1) \& Lemma 3.8 (1)]{Wu5}. At $\vp < \infty$ and $\vp \notin S$, we choose $\Phi_{\vp} = \mathbbm{1}_{\vo_{\vp} \times \vo_{\vp}}$ so that
	 $$ f_{\Phi_{\vp}}(s,g) = L_{\vp}(1+2s,\chi_{1,\vp} \chi_{2,\vp}^{-1}) f_{\vp,s}(g). $$
	 At $\vp \in S$, by the isomorphism map $\iota$ defined in \cite[\S 3.4.2]{Wu5} we deduce the existence of $\Phi_{\vp}$ with support in 
	 $$ F_0 := \left\{ (x,y) \in \F_{\vp}^2 \ \middle| \ \max(\norm[x]_{\vp}, \norm[y]_{\vp}) = 1 \right\}, $$
	 such that $f_{\Phi_{\vp}}(0,\kappa) = f_{\vp,0}(\kappa)$ for all $\kappa \in \gp{K}_{\vp}$. Note that $F_0$ is stable by $\gp{K}_{\vp}$, therefore the integral
	 $$ f_{\Phi_{\vp}}(s,\kappa) = \chi_{1,\vp}(\det \kappa) \int_{\F_{\vp}^{\times}} \Phi_{\vp}((0,t)\kappa) \chi_{1,\vp}\chi_{2,\vp}^{-1}(t) \norm[t]_{\vp}^{1+2s} d^{\times}t $$
	 is non-vanishing only for $t \in \vo_{\vp}^{\times}$ thus independent of $s$, hence $f_{\Phi_{\vp}}(s,g)$ is already a flat section, implying
	 $$ f_{\Phi_{\vp}}(s,g) = f_{\vp,s}(g), $$
	 and conclude the proof of the claim (\ref{SecRel}). The type (1) poles are precisely the zeros of $L^{(S)}(1+2s,\chi_1\chi_2^{-1})$ for all possible $S$.
\end{proof}

\begin{corollary}
	The possible poles of $s \mapsto M_3(\lambda, \Psi \mid \chi, s)$ are classified as
\begin{itemize}
	\item[(0)] $(\rho - 1)/2$ resp. $(1-\rho)/2$ as $\rho$ runs over the zeros of $L^{(S)}(s, \omega^{-1}\chi^2)$ resp. $L^{(S)}(s, \omega\chi^{-2})$ (see (\ref{PartialL})), where $S$ is a finite set of finite places such that $\vp < \infty$ and $\vp \notin S$ implies $\Psi$ is bi-$\gp{K}_{\vp}$-invariant;
	\item[(1)] $(1-s_0)/2, -(1+s_0)/2$ if $\chi = \mathbbm{1}$; $(s_0-1)/2, (s_0+1)/2$ if $\chi = \omega$.
	\item[(2)] $-(s_1+(s_0+1)/2), -(s_1+(s_0-1)/2)$ if $\chi = \chi_1^{-1}$; $s_1+(s_0-1)/2, s_1+(s_0+1)/2$ if $\chi = \omega\chi_1$.
	\item[(3)] $-(s_2-(s_0-1)/2), -(s_2-(s_0+1)/2)$ if $\chi = \omega\chi_2^{-1}$; $s_2-(s_0+1)/2, s_2-(s_0-1)/2$ if $\chi = \chi_2$.
\end{itemize}
	The residues are meromorphic functions in $(s_1,s_2, s_0) \in \C^3$.
\label{ClassPoles}
\end{corollary}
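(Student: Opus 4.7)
The plan is to read off the pole structure of $M_3(\lambda, \Psi \mid \chi, s)$ directly from its factorization (\ref{SpecMDCont}) as a sum over $e_1, e_2 \in \Bas(\chi, \omega\chi^{-1})$ of a product of three meromorphic objects: one Godement--Jacquet zeta integral and two extended Hecke--Jacquet--Langlands zeta integrals for Eisenstein series. The first reduction will be to note that since $\Psi \in \Sch(\Mat_2(\A))$ is bi-$\gp{K}_{\vp}$-invariant outside a finite set $S_{\Psi}$, the Godement--Jacquet integral $\Zeta((s_0+1)/2, \Psi, \beta_s(e_2, e_1^{\vee}))$ vanishes unless $e_1, e_2$ are $\gp{K}_{\vp}$-spherical at every $\vp \notin S_{\Psi}$; the resulting sum is therefore effectively finite, and the poles of $M_3(\lambda, \Psi \mid \chi, s)$ in $s$ lie in the union of the poles of the three factors.

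First I would treat the Godement--Jacquet factor via the identity (\ref{GJZetaPS}), which expresses it as a product of two Tate integrals with respective characters $\chi\norm_{\A}^{(s_0+1)/2+s}$ and $\omega\chi^{-1}\norm_{\A}^{(s_0+1)/2-s}$. The residue formulas (\ref{TateIntRes}) immediately give exactly the two pole locations at $\chi = \mathbbm{1}$ and the two at $\chi = \omega$ recorded in case (1) of the corollary, and no others from this factor.

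Next I would apply Lemma \ref{EisZetaPoles} to each of the two extended Hecke--JL integrals. The first, $\Zeta(s_1 + (s_0+1)/2, \eis(s, e_1), \chi_1)$, has ambient principal series $\pi(\chi, \omega\chi^{-1})$ of ratio $\omega^{-1}\chi^2$, so cases (2) and (3) of the lemma produce, for $\chi = \chi_1^{-1}$ resp.\ $\chi = \omega\chi_1$, the pole pairs listed in (2) resp.\ (3) of the corollary, while case (1) produces poles at $\{(\rho-1)/2 : L^{(S)}(\rho, \omega^{-1}\chi^2) = 0\}$. The second factor $\Zeta(s_2 - (s_0-1)/2, \eis(-s, e_2^{\vee}), \chi_2)$ is handled identically but with $s$ replaced by $-s$ and ambient principal series $\pi(\chi^{-1}, \omega^{-1}\chi)$ of ratio $\omega\chi^{-2}$: it yields the complementary halves of (2) and (3), and poles at $\{(1-\rho)/2 : L^{(S)}(\rho, \omega\chi^{-2}) = 0\}$. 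To merge the latter with the contribution of the first factor into the unified description (0), I would invoke the functional equation $\Lambda(s, \omega\chi^{-2}) \sim \Lambda(1-s, \omega^{-1}\chi^2)$ for the completed $L$-function, whose non-trivial zeros agree with those of the partial $L^{(S)}$ up to the finitely many Euler factors removed, rewriting the pole set as $\{\rho/2 : L^{(S)}(\rho, \omega^{-1}\chi^2) = 0\}$.

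The meromorphy of the residues as functions of $\lambda$ will be a routine consequence of the joint meromorphy of each factor in $(\lambda, s) \in \C^4$: a residue at a fixed value of $s$ of a meromorphic function of several complex variables is meromorphic in the remaining variables. The main delicate point will be the bookkeeping for $S$: one must verify that any $S$ containing $S_{\Psi}$ suffices for the identity (\ref{SecRel}) applied to the finitely many contributing $\gp{K}$-isotypic sections, and that enlarging $S$ never enlarges the pole set described in (0)--- both of which follow from the non-vanishing of the local factors $L_{\vp}(s, \omega^{-1}\chi^2)$ at the values of $s$ in question and the fact that the polynomial factors $P_v(s)$ of (\ref{SecRel}) have zeros cancelling poles of the archimedean Euler factors rather than producing new singularities.
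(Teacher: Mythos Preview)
Your proposal is correct and follows exactly the paper's approach: the paper's proof simply records that the type (1) poles come from the Godement--Jacquet factor via (\ref{GJZetaPS}) as a two-dimensional Tate integral, while types (0), (2) and (3) come from the two $\GL_2 \times \GL_1$ zeta integrals via Lemma \ref{EisZetaPoles}. Two small slips to fix: both conditions $\chi = \chi_1^{-1}$ and $\chi = \omega\chi_1$ arising from the first Hecke--Jacquet--Langlands factor belong to case (2) of the corollary (not (2) and (3) respectively), with the second factor accounting for all of case (3); and the sum over $e_1,e_2$ is not literally finite at the archimedean places, though this does not affect the pole locus since the convergence is uniform in $s$ away from the listed points.
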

\begin{proof}
	The types (0), (2) and (3) poles come from the $\GL_2 \times \GL_1$ zeta integrals. The type (1) poles come from the Godement-Jacquet integral, which is essentially a two dimensional Tate's integral due to (\ref{GJZetaPS}). Note that the formula for the residues of the simple poles of Tate's integral is reviewed in (\ref{TateIntRes}).
\end{proof}

	We note that every summand/integrand on the right hand side of (\ref{SpecSide}) is well-defined for $\lambda \in D_0$, and the sums/integrals are absolutely convergent. But the resulting function, denoted by
\begin{equation} 
	II_0(\lambda, \Psi) := \sum_{\substack{\pi \text{ cuspidal} \\ \omega_{\pi} = \omega }} M_3(\lambda, \Psi \mid \pi) + \sum_{\chi \in \widehat{\R_+ \F^{\times} \backslash \A^{\times}}} \int_{-\infty}^{\infty} M_3(\lambda, \Psi \mid \chi, i\tau) \frac{d\tau}{4\pi}, \quad \lambda \in D_0
\label{II_0AC}
\end{equation}
is not the meromorphic continuation of $\Theta(\lambda, \Psi) - DS_0(\lambda, \Psi)$ from $\lambda \in D$, because as $\lambda$ goes continuously from $D$ to $D_0$, it hits several ``walls''/hyperplanes of singularities, such as $\Re s_0 = 1$ (see Corollary \ref{ClassPoles}).

\begin{proposition}
	The meromorphic continuation of $\Theta(\lambda, \Psi) - DS_0(\lambda,\Psi)$ from $\lambda \in D$ to $D_0-H$ is given by
\begin{align*}
	\Theta(\lambda, \Psi) - DS_0(\lambda,\Psi) &= II_0(\lambda, \Psi) + \Res_{s=\frac{1-s_0}{2}} M_3(\lambda, \Psi \mid \mathbbm{1}, s) + \Res_{s=-\left( s_1 + \frac{s_0-1}{2} \right)} M_3(\lambda, \Psi \mid\chi_1^{-1}, s) \\
	&\quad + \Res_{s=-\left( s_2 - \frac{s_0+1}{2} \right)} M_3(\lambda, \Psi \mid \omega\chi_2^{-1}, s),
\end{align*}
	where $H=H(\omega,\chi_1,\chi_2)$ is the union of hyperplanes in which two of the above three residues may merge, which has measure $0$.
\label{SpecSideAC}
\end{proposition}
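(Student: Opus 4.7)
The plan is to analytically continue the identity (\ref{SpecSide}), valid with absolute convergence in $D$, to $\lambda\in D_0$ by a contour-shift argument in $s$. The function $II_0(\lambda,\Psi)$ from (\ref{II_0AC}) is absolutely convergent in $D_0$ and coincides \emph{formally} with the right-hand side of (\ref{SpecSide}); the task is to compute the genuine analytic continuation of $\Theta(\lambda,\Psi)-DS_0(\lambda,\Psi)$ and to identify it with $II_0$ plus explicit residue terms. The cuspidal sum and $DS_0$ extend holomorphically term by term (cuspidal Hecke--Jacquet--Langlands and Godement--Jacquet zeta integrals being entire) with the same absolute convergence on compact subsets of $D_0$ as they have on $D$, so the only real work is with the continuous integral $\sum_\chi\int_{\Re s=0}M_3(\lambda,\Psi\mid\chi,s)\,ds/(4\pi i)$.

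For this I use Corollary \ref{ClassPoles} to classify the poles of $s\mapsto M_3(\lambda,\Psi\mid\chi,s)$. The type (0) poles (zeros of partial $L$-functions) are independent of $\lambda$ and a small horizontal perturbation puts them off $\Re s=0$; types (1)--(3) move with $\lambda$. A direct inspection, comparing $\Re s_0>2$ and $\Re s_1,\Re s_2$ large in $D$ against $|s_0|,|s_1|,|s_2|<1/6$ in $D_0$, shows that among all the type (1)--(3) poles listed in Corollary \ref{ClassPoles}, exactly six cross $\Re s=0$: three from left to right, occurring at
\[ s=\tfrac{1-s_0}{2},\qquad s=-s_1-\tfrac{s_0-1}{2},\qquad s=-s_2+\tfrac{s_0+1}{2} \]
for $\chi=\mathbbm 1,\chi_1^{-1},\omega\chi_2^{-1}$ respectively, and the three mirror poles $s\mapsto-s$ crossing from right to left for the functional-equation partners $\chi=\omega,\omega\chi_1,\chi_2$. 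All other moving poles remain on their original sides.

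The actual analytic continuation is then the integral along a deformed vertical contour that keeps every moving pole on the same side it occupied in $D$. Pulling this contour back to $\Re s=0$ in $D_0$ produces at each crossing a contribution of $\pm\tfrac12\mathrm{Res}$, with sign determined by the direction of crossing and the factor $\tfrac12$ coming from $2\pi i/(4\pi i)$. Applying the functional equation in Lemma \ref{CMDFE} together with the elementary identity $\mathrm{Res}_{s=-a}g(-s)=-\mathrm{Res}_{u=a}g(u)$, the $\chi$-residue and its $\omega\chi^{-1}$-partner in each pair turn out to have matching sign and combine into a single full residue. This yields precisely the three residue terms written in the statement, with the remaining vertical integral being $II_0(\lambda,\Psi)$.

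The main obstacle is justifying the term-by-term contour shift while preserving absolute convergence of the sum over $\chi$ and of the vertical integral along the deformed contour; the operations \emph{``sum over $\chi$''}, \emph{``integrate vertically''} and \emph{``shift contour''} must be shown to commute. This reduces to uniform polynomial-in-height bounds on $M_3(\lambda,\Psi\mid\chi,s)$ as $|\Im s|\to\infty$ and as the analytic conductor of $\chi$ grows, of the same flavour as the bounds underlying the absolute convergence of the spectral side in $D$: Stirling decay of the archimedean local factors of the Godement--Jacquet and Hecke--Jacquet--Langlands integrals, paired with the rapid decay of the Mellin transforms of $\Psi\in\Sch(\Mat_2(\A))$ and the standard convexity bound on the $L$-factors entering (\ref{SecRel}).
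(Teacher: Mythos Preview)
Your proposal is correct and follows essentially the same approach as the paper's own proof: a contour-shift argument in $s$, tracking which of the poles listed in Corollary \ref{ClassPoles} cross $\Re s=0$ as $\lambda$ moves from $D$ to $D_0$, then pairing each crossing pole with its mirror via the functional equation of Lemma \ref{CMDFE} so that the two half-residues combine into a single full residue. Your pole-counting (exactly six crossings, three in each direction) matches the paper's computation, and your identification of the surviving residues is the same.

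The only stylistic difference is that the paper makes the analytic continuation concrete by first shifting to a large rectangular contour $L_Q$ far to the right (picking up the type (0) and $Z_\chi$ residues), observing that the resulting expression is manifestly meromorphic in $\lambda$, and then shifting back to $\Re s=0$ for $\lambda\in D_0$; the type (0) residues, being $\lambda$-independent, cancel in the round trip. Your single ``deformed contour that keeps every moving pole on its original side'' is an equivalent but slightly more heuristic packaging of the same two-step move; the paper's explicit intermediate contour has the minor advantage of making the meromorphy in $\lambda$ immediate without having to argue that the deformation itself varies analytically. Your treatment of the type (0) poles by a small fixed perturbation is fine and amounts to the same thing.
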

\begin{proof}
	Write $\Omega := \{ \mathbbm{1}, \omega, \chi_1^{-1}, \omega\chi_1, \chi_2, \omega\chi_2^{-1} \}$ and define for $\lambda \in D$
\begin{align*}
	II^{\Omega}(\lambda, \Psi) &:= \sum_{\substack{\pi \text{ cuspidal} \\ \omega_{\pi} = \omega }} M_3(\lambda, \Psi \mid \pi) + \sideset{}{_{\substack{ \chi \in \widehat{\R_+ \F^{\times} \backslash \A^{\times}} \\ \chi \notin \Omega  }}} \sum \int_{-\infty}^{\infty} M_3(\lambda, \Psi \mid \chi, i\tau) \frac{d\tau}{4\pi}.
\end{align*}
	Then every summand/integrand on the right hand side is entire in $\lambda = (s_0,s_1,s_2) \in \C^3$. Hence it defines an analytic function in $D_0$, which is its meromorphic continuation from $D$. Since
	$$ \Theta(\lambda, \Psi) - DS_0(\lambda, \Psi) = II^{\Omega}(\lambda, \Psi) + \sideset{}{_{\chi \in \Omega}} \sum \int_{\Re s = 0} M_3(\lambda, \Psi \mid \chi, s) \frac{ds}{4\pi i} $$
	for $(s_1,s_2,s_0) \in D$, we only need to give the analytic continuation for each $\chi \in \Omega$ of
	$$ \int_{\Re s = 0} M_3(\lambda, \Psi \mid \chi, s) \frac{ds}{4\pi i}. $$
	Let $B \gg 1$ and define a box region
	$$ X_B := \left\{ (s_1,s_2,s_0) \in \C^3 \ \middle| \ \norm[\Re s_j], \norm[\Im s_j] < B \right\}. $$ 
	Let $Q > 3B+1$ and take $L_Q$ to be the polygonal line joining $-i\infty, -iQ, Q-iQ, Q+iQ, iQ$ and $i\infty$. Let $S$ be the set of finite places $\vp < \infty$ at which $\Psi$ is not bi-$\gp{K}_{\vp}$-invariant. We choose $Q$ carefully so that $L_Q$ avoids any zero of $L^{(S)}(1-2s, \omega^{-1}\chi^2)$ for $\chi \in \Omega$. Then for $(s_1,s_2,s_0) \in D \cap X_B$
\begin{align}
	\int_{\Re s = 0} &M_3(\lambda, \Psi \mid \chi, s) \frac{ds}{4\pi i} = \int_{L_Q} M_3(\lambda, \Psi \mid \chi, s) \frac{ds}{4\pi i} \nonumber \\
	&- \frac{1}{2} \sum_{\rho: L^{(S)}(1-2s, \omega\chi^{-2}) = 0, (0<) \Re \rho, \norm[\Im \rho] < Q} \Res_{s=\rho} M_3(\lambda, \Psi \mid \chi, s) \nonumber \\
	&- \frac{1}{2} \sideset{}{_{\rho \in Z_{\chi}}} \sum \Res_{s=\rho} M_3(\lambda, \Psi \mid \chi, s), \label{CShift}
\end{align}
	where $Z_{\chi}$ is the set of non type (0) poles (see Corollary \ref{ClassPoles}) encountered in the contour shift. Precisely, we have (if multiple conditions are satisfied, take the union of the corresponding sets on the right hand side)
	$$ Z_{\chi} = \left\{ \begin{matrix} \{ \rho_{\chi}, (s_0+1)/2 \} & \text{if } \chi = \omega \\ \{ \rho_{\chi}, s_1 + (s_0+1)/2 \} & \text{if } \chi = \omega \chi_1 \\ \{ \rho_{\chi}, s_2 - (s_0-1)/2 \} & \text{if } \chi = \chi_2 \\ \emptyset & \text{otherwise} \end{matrix} \right. \quad \text{with} \quad \rho_{\chi} := \left\{ \begin{matrix} (s_0-1)/2 & \text{if } \chi = \omega \\ s_1 + (s_0-1)/2 & \text{if } \chi = \omega \chi_1 \\ s_2 - (s_0+1)/2 & \text{if } \chi = \chi_2 \end{matrix} \right.. $$
	The meromorphic continuation from $D \cap X_B$ to $D_0$ of the right hand side of (\ref{CShift}) has the same expression. In particular, the residues appearing on the right hand side of (\ref{CShift}) are naturally meromorphic functions in $(s_1,s_2,s_0) \in \C^3$. For example, we have for $L^{(S)}(1-2\rho, \omega\chi^{-2})$ and by (\ref{GJZetaPS})
\begin{align*} 
	&\quad \Res_{s=\rho} M_3(\lambda, \Psi \mid \chi, s) \\
	&= \sum_{e_1,e_2 \in \Bas(\chi,\omega\chi^{-1})} \int_{(\A^{\times})^2} \Four[2]{ {}_{e^{\vee}_1} \! \Psi_{e_2} } \begin{pmatrix} t_1 & 0 \\ 0 & t_2 \end{pmatrix} \chi(t_1) \norm[t_1]_{\A}^{\frac{s_0+1}{2}+\rho} \omega\chi^{-1}(t_2) \norm[t_2]_{\A}^{\frac{s_0+1}{2}-\rho} d^{\times}t_1 d^{\times}t_2 \cdot \\
	&\quad \Zeta \left( s_1+\frac{s_0+1}{2}, \eis(\rho,e_1), \chi_1 \right) \cdot \Res_{s=\rho} \Zeta \left( s_2-\frac{s_0-1}{2}, \eis(-s,e_2^{\vee}), \chi_2 \right).
\end{align*}
	Fix $(s_0,s_1,s_2) \in D_0$, we shift the contour back. The right hand side of (\ref{CShift}) becomes
\begin{align*}
	\int_{\Re s = 0} M_3(\lambda, \Psi \mid \chi, s) &\frac{ds}{4\pi i} - \frac{1}{2} \Res_{s=\rho_{\chi}} M_3(\lambda, \Psi \mid \chi, s) + \frac{1}{2} \Res_{s=\rho_{\chi}'} M_3(\lambda, \Psi \mid \chi, s),
\end{align*}
	where $\rho_{\chi}'$ is the pole we meet during the backwards contour shift whose residue does not cancel out with those residues already appeared. Precisely
	$$ \rho_{\chi}' = -\rho_{\omega\chi^{-1}} = \left\{ \begin{matrix} (1-s_0)/2 & \text{if } \chi = \mathbbm{1} \\ -(s_1+(s_0-1)/2) & \text{if } \chi = \chi_1^{-1} \\ -(s_2-(s_0+1)/2) & \text{if } \chi = \omega\chi_2^{-1} \\ \text{does not exist} & \text{otherwise} \end{matrix} \right. . $$
	But the functional equation in Lemma \ref{CMDFE} implies
	$$ \Res_{s=\rho_{\chi}} M_3(\lambda, \Psi \mid \chi, s) = -\Res_{s=\rho_{\omega\chi^{-1}}'} M_3(\lambda, \Psi \mid \omega\chi^{-1}, s). $$
	Hence we only need to sum over $\rho_{\chi}'$, giving the stated expression. We conclude since $B$ is arbitrary.
\end{proof}

	It remains to show the meromorphic continuation of $DS_0(\lambda,\Psi)$ as defined by (\ref{DS0Bis}), which will be given at the end of the next subsection \S \ref{GeomAspRes}.

	\subsection{Geometric Aspect of Residues}
	\label{GeomAspRes}
	
	The residues of $M_3(\lambda, \Psi \mid \chi,s) = M_3(\lambda, \Psi \mid \chi, s)$ appearing in Proposition \ref{SpecSideAC} are at $s=1/2$ for $\lambda = \vec{0}$. There are three more such residues which do not show up in Proposition \ref{SpecSideAC}:
\begin{align} 
	&\frac{1}{\zeta_{\F}^*} \Res_{s=\frac{s_0+1}{2}} M_3(\lambda, \Psi \mid \omega, s), \label{ResMDEx1} \\
	&\frac{1}{\zeta_{\F}^*} \Res_{s=s_1+\frac{1+s_0}{2}} M_3(\lambda, \Psi \mid \omega\chi_1, s), \label{ResMDEx2} \\
	&\frac{1}{\zeta_{\F}^*} \Res_{s=s_2-\frac{s_0-1}{2}} M_3(\lambda, \Psi \mid \chi_2, s). \label{ResMDEx3}
\end{align}
	We are going to give alternative expressions of them in some subregion of $D'$ of parameters. It will be convenient to use the new parameters in computation
	$$ \tilde{s}_0 := \frac{s_0}{2}, \quad \tilde{s}_1 := s_1 + \frac{s_0}{2}, \quad \tilde{s}_2 := s_2 - \frac{s_0}{2}. $$
	Hence the domains $D$ and $D'$ become
	$$ D = \left\{ (\tilde{s}_0, \tilde{s}_1, \tilde{s}_2) \in \C^3 \ \middle| \ \Re \tilde{s}_0 > 1, \Re \tilde{s}_1 - 2 \Re \tilde{s_0} - 2 > \frac{1}{2}, \Re \tilde{s}_2 > \frac{1}{2} \right\}, $$
	$$ D' = \left\{ (\tilde{s}_0, \tilde{s}_1, \tilde{s}_2) \in \C^3 \ \middle| \ \Re \tilde{s}_0 > 1, \Re \tilde{s}_1 - \Re \tilde{s}_0 > 1, \Re \tilde{s}_2 - \Re \tilde{s}_1 > 1 \right\}. $$
	
	We first record some elementary results concerning the zeta integrals.
\begin{lemma}
	Let $\chi_1,\chi_2$ and $\chi$ be quasi-characters of $\F^{\times} \backslash \A^{\times}$. Let $f \in \pi(\chi_1,\chi_2)$ be a smooth vector. 
\begin{itemize}
	\item[(1)] If $\Re s_0 > (1+ \Re (\chi_1^{-1}\chi_2))/2$, then for $1-\Re s_0 - \Re(\chi\chi_1) < \Re s < \Re s_0 - \Re(\chi\chi_2)$ we have
	$$ \Zeta(s, \eis(s_0,f), \chi) = \int_{\A^{\times}} f_{s_0}(wn(u)) \chi^{-1}\chi_2^{-1}(u) \norm[u]_{\A}^{1+s_0-s} d^{\times}u, $$
	where the integral is absolutely convergent. Here, $\Re (\chi) \in \R$ is defined by $\norm[\chi(t)] = \norm[t]_{\A}^{\Re(\chi)}$.
	\item[(2)] Let $W_{s_0}$ be the Whittaker function of $f_{s_0} \in \pi(\chi_1\norm_{\A}^{s_0}, \chi_2\norm_{\A}^{-s_0})$. If $\Re s + \Re(\chi) - 1 > \max(\Re(\chi_1)+\Re s_0, \Re(\chi_2)-\Re s_0)$, then we have
	$$ \Zeta(s, \eis(s_0,f), \chi) = \int_{\A^{\times}} W_{s_0}(a(y)) \chi(y) \norm[y]_{\A}^{s-\frac{1}{2}} d^{\times}y, $$
	where the integral is absolutely convergent.
\end{itemize}
\label{GlobZetaCalAlt}
\end{lemma}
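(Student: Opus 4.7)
My strategy is to prove both parts by unfolding the definition of $\Zeta(s, \eis(s_0, f), \chi)$ using two different expressions for the ``non-constant part'' $\eis(s_0, f) - \eisCst(s_0, f)$: the Bruhat decomposition for Part (1), and the Whittaker--Fourier expansion for Part (2). In each case the central combinatorial step is to merge a sum over $\F^\times$ with the outer integral over $\F^\times \backslash \A^\times$, using the triviality of Hecke characters on $\F^\times$, so as to obtain a single integral over $\A^\times$.

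For Part (1), the Bruhat decomposition $\GL_2(\F) = \gp{B}(\F) \sqcup \gp{B}(\F) w \gp{N}(\F)$ together with the standard constant-term computation yields
$$ (\eis(s_0, f) - \eisCst(s_0, f))(g) = \sum_{u \in \F} f_{s_0}(wn(u) g) - \Intw f_{s_0}(g). $$
Specializing to $g = a(t)$ and using the matrix identity $wn(u) a(t) = d(1, t) \, wn(u/t)$, the Borel transformation law of $f_{s_0}$ extracts a common factor $\chi_2(t) \norm[t]_{\A}^{-s_0 - 1/2}$; after a change of variables in the $\Intw$ integral, the expression becomes
$$ \chi_2(t) \norm[t]_\A^{-s_0 - 1/2} \left( \sum_{u \in \F} F(u/t) - \int_{\A} F(u/t) \, du \right), \qquad F(x) := f_{s_0}(wn(x)). $$
I would then apply Poisson summation (to the rescaled function $G(u) = F(u/t)$, whose Fourier transform equals $\norm[t]_\A \widehat F(t \cdot)$) to identify the bracket with $\norm[t]_\A \sum_{v \in \F^\times} \widehat F(tv)$. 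Feeding this back into the zeta integral and unfolding the pair $\sum_{v \in \F^\times} \int_{\F^\times \backslash \A^\times}$ via the diagonal $\F^\times$-action produces $\int_{\A^\times} \widehat F(t) \, \chi\chi_2(t) \norm[t]_\A^{s - s_0} \, d^\times t$, and Tate's global functional equation for this Tate-type integral finally converts it into the claimed formula $\int_{\A^\times} f_{s_0}(wn(v)) \, \chi^{-1}\chi_2^{-1}(v) \norm[v]_\A^{1 + s_0 - s} \, d^\times v$.

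For Part (2), I would instead substitute the Whittaker--Fourier expansion
$$ \eis(s_0, f)(g) - \eisCst(s_0, f)(g) = \sum_{\alpha \in \F^\times} W_{s_0}(a(\alpha) g), $$
obtained from the Fourier development of $\eis(s_0, f)$ along $\gp{N}(\F) \backslash \gp{N}(\A)$ after recognizing each nonzero Fourier coefficient as a translate of the Jacquet integral for $f_{s_0}$. Setting $g = a(t)$ and interchanging $\sum$ with $\int$, the substitution $y = \alpha t$ (valid because $\chi(\alpha) = 1$ and $\norm[\alpha]_\A = 1$ for $\alpha \in \F^\times$) telescopes $\sum_{\alpha \in \F^\times} \int_{\F^\times \backslash \A^\times}$ into $\int_{\A^\times}$, delivering the claimed formula $\int_{\A^\times} W_{s_0}(a(y)) \, \chi(y) \norm[y]_\A^{s - 1/2} \, d^\times y$.

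The main obstacle in both parts is the absolute-convergence bookkeeping that justifies the interchanges of sum and integral and pins down the stated strips of $\Re s$. For Part (1), the needed analytic input is that $F(u) = f_{s_0}(wn(u))$ is bounded at $u = 0$ (since $wn(0) = w$) and satisfies $F(u) \sim \chi_1^{-1}\chi_2(u) \norm[u]_\A^{-2s_0 - 1}$ as $\norm[u]_\A \to \infty$, which I would derive from the Iwasawa-type decomposition of $wn(u)$ for $u$ invertible; the hypothesis $\Re s_0 > (1 + \Re(\chi_1^{-1}\chi_2))/2$ is precisely what renders $\Intw f_{s_0}(e)$ convergent and thereby legitimises the Poisson step. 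For Part (2), convergence is controlled by Schwartz decay of $W_{s_0}(a(y))$ as $\norm[y]_\A \to \infty$, together with the Kirillov-model asymptotic $W_{s_0}(a(y)) \sim c_1 \chi_1(y) \norm[y]_\A^{1/2 + s_0} + c_2 \chi_2(y) \norm[y]_\A^{1/2 - s_0}$ as $\norm[y]_\A \to 0$, which between them translate directly into the stated linear conditions on $\Re s$.
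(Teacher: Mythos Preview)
Your approach is essentially correct and, for Part (2), coincides with what the paper implicitly uses: the standard Whittaker--Fourier unfolding. For Part (1), however, your route is genuinely different from the paper's. The paper does not work with the Bruhat decomposition and Poisson summation at all; instead it replaces the flat section $f_{s_0}$ by a Godement section $f_{\Phi}(s_0,\cdot)$ built from $\Phi \in \Sch(\A^2)$, computes its Whittaker function explicitly as
\[
W_{\Phi}(s_0,a(y)) = \chi_1(y)\norm[y]_{\A}^{1/2+s_0}\int_{\A^\times}\OFour_2\Phi(ty,1/t)\,\chi_1\chi_2^{-1}(t)\norm[t]_{\A}^{2s_0}\,d^\times t,
\]
and thereby turns the zeta integral into a genuine two--dimensional Tate integral in $(t_1,t_2)$. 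Tate's global functional equation is then applied to the $t_2$-variable, where the integrand is honestly Schwartz, and the result is rewritten as $\int_{\A^\times} f_{\Phi}(s_0,wn(u))\chi^{-1}\chi_2^{-1}(u)\norm[u]_{\A}^{1+s_0-s}d^\times u$.

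The practical difference is this: in your argument the function $F(u)=f_{s_0}(wn(u))$ is \emph{not} in $\Sch(\A)$ (it only has polynomial decay $\sim \norm[u]_{\A}^{-1-2\Re s_0}$ at infinity, and at archimedean places is not Schwartz), so neither the Poisson summation step nor the ``Tate's global functional equation'' step applies to it out of the box. Both steps can be justified---Poisson by the assumed bound $\Re s_0>(1+\Re(\chi_1^{-1}\chi_2))/2$ which puts $F$ in $L^1$ with continuous $\widehat F$, and the functional equation by analytic continuation or by recognising it as the known local/global functional equation for $\GL_2$ zeta integrals---but you should flag that these are extensions beyond the Schwartz case. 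The paper's Godement-section device sidesteps this entirely by arranging that the relevant Tate integral is against a Schwartz function, at the cost of a linear-span argument at the end; your approach is more direct but carries this extra analytic burden.
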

\begin{proof}
	We only prove (1). Both sides being linear in $f_{s_0} \in \pi(\chi_1 \norm_{\A}^{s_0},\chi_2 \norm_{\A}^{-s_0})$, we can replace the flat section $f_{s_0}$ by a Godement section constructed from some $\Phi \in \Sch(\A^2)$
	$$ f_{\Phi}(s_0,g) := \chi_1(\det g) \norm[\det g]_{\A}^{1/2+s_0} \int_{\A^{\times}} \Phi((0,t)g) \chi_1\chi_2^{-1}(t) \norm[t]_{\A}^{1+2s_0} d^{\times}t. $$
	Routine computation gives the Whittaker function of $f_{\Phi}(s_0,g)$
	$$ W_{\Phi}(s_0, a(y)) = \chi_1(y) \norm[y]_{\A}^{1/2+s_0} \int_{\A^{\times}} \OFour_2 \Phi(ty,1/t) \chi_1\chi_2^{-1}(t) \norm[t]_{\A}^{2s_0} d^{\times}t. $$
	Hence for $\Re s \gg 1$, we get
\begin{align*} 
	\Zeta(s, \eis(f_{\Phi}(s_0,\cdot)), \chi) &= \int_{\A^{\times}} W_{\Phi}(s_0, a(y)) \chi(y) \norm[y]_{\A}^{s-1/2} d^{\times}y \\
	&= \int_{\A^{\times}} \! \int_{\A^{\times}} \OFour_2 \Phi(t_1,t_2) \chi\chi_1(t_1) \norm[t_1]_{\A}^{s+s_0} \chi\chi_2(t_2) \norm[t_2]_{\A}^{s-s_0} d^{\times}t_1 d^{\times}t_2.
\end{align*}
	Applying the global functional equation to $t_2$, we get for $1-\Re s_0 - \Re(\chi\chi_1) < \Re s < \Re s_0 - \Re(\chi\chi_2)$
\begin{align*} 
	\Zeta(s, \eis(f_{\Phi}(s_0,\cdot)), \chi) &= \int_{\A^{\times}} \! \int_{\A^{\times}} \Phi(t_1,t_2) \chi\chi_1(t_1) \norm[t_1]_{\A}^{s+s_0} \chi^{-1}\chi_2^{-1}(t_2) \norm[t_2]_{\A}^{1+s_0-s} d^{\times}t_1 d^{\times}t_2 \\
	&= \int_{\A^{\times}} f_{\Phi}(s_0, wn(u)) \chi^{-1}\chi_2^{-1}(u) \norm[u]_{\A}^{1+s_0-s} d^{\times}u,
\end{align*}
	since $f_{\Phi}(s_0, wn(u)) = \int_{\A^{\times}} \Phi(t,tu) \chi_1\chi_2^{-1}(t) \norm[t]_{\A}^{1+2s_0} d^{\times}t$.
\end{proof}

\begin{lemma}
	Let $f \in \pi(\chi_1, \chi_2)$. Then we have $W_f = W_{\Intw f}$, where $\Intw: \pi(\chi_1,\chi_2) \to \pi(\chi_2,\chi_1)$ is the intertwining operator.
\label{WhiInv}
\end{lemma}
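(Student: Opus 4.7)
The plan is to reduce the statement to the Eisenstein-series functional equation that has already been recorded in this paper. Following the convention used for $W_e$ and $W(i\tau,f)$ in the proof of Theorem \ref{GJPreTrace}, for $f \in \pi(\chi_1,\chi_2)$ the Whittaker function $W_f$ is the Whittaker coefficient of the associated Eisenstein series,
$$ W_f(g) = \int_{\F \backslash \A} \eis(f)(n(u)g) \psi(-u) du. $$
The intertwining operator $\Intw: \pi(\chi_1, \chi_2) \to \pi(\chi_2, \chi_1)$ satisfies the identity $\eis(f) = \eis(\Intw f)$ of automorphic forms, which is precisely property (2) quoted in the proof of Lemma \ref{CMDFE}. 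Substituting this identity into the displayed formula and using linearity of the Whittaker coefficient yields $W_f = W_{\Intw f}$ in one line.

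If one instead adopts the definition of $W_f$ via the Jacquet integral $\int_{\A} f(wn(u)g)\psi(-u)du$ (which is equal to the above Whittaker coefficient on the open Bruhat cell), an alternative direct proof is available: one unfolds $W_{\Intw f}(g) = \int_{\A}\int_{\A} f(wn(v)wn(u)g)\psi(-u)dv\,du$, applies the Bruhat decomposition $wn(v)w = n(-1/v) d(-1/v,-v) w n(-1/v)$ on the open cell $v \in \F^{\times}$ to bring the inner integrand back into the form $f(wn(\cdot)g)$, and then changes variables to collapse the double integral. This again identifies $W_{\Intw f}$ with $W_f$, now on the level of Jacquet integrals.

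The main obstacle is purely bookkeeping of convergence: one first establishes the identity in a region of parameters $(\chi_1,\chi_2)$ where both the intertwining integral and the Jacquet integral are absolutely convergent, and then extends to the general case by meromorphic continuation. No genuinely new idea is required beyond the Eisenstein functional equation already cited in the proof of Lemma \ref{CMDFE}, so the lemma is essentially the statement of that functional equation at the level of Whittaker models.
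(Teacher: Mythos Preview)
Your primary argument is exactly the paper's proof: invoke the Eisenstein functional equation $\eis(f)=\eis(\Intw f)$ and take the Whittaker coefficient $W_f(g)=\int_{\F\backslash\A}\eis(f)(n(u)g)\psi(-u)\,du$. The Jacquet-integral alternative you sketch is extra and not in the paper, but the main line matches verbatim.
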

\begin{proof}
	We have the functional equation of Eisenstein series $\eis(f) = \eis(\Intw f)$. The desired equality follows since by definition we have
	$$ W_f(g) = \int_{\F \backslash \A} \eis(f)(n(u)g) \psi(-u) du. $$
\end{proof}
	
	We then notice that for quasi-characters $\chi, \chi_1,\chi_2$ of $\F^{\times} \backslash \A^{\times}$ the following function on $\GL_2(\A) \times \GL_2(\A)$
	$$ f_{\Psi}(g_1,g_2; \chi_1,\chi_2) := \int_{(\A^{\times})^2} \OFour_2 \rpL(g_1) \rpR(g_2) \Psi \begin{pmatrix} t_1 & \\ & t_2 \end{pmatrix} \chi_1(t_1) \chi_2(t_2) \norm[t_1t_2]_{\A}^{-\frac{1}{2}} d^{\times}t_1 d^{\times}t_2, $$
	$$ \text{resp.} \quad f_{\Psi}(g_1,g_2; \chi,*) := \int_{\A^{\times}} \OFour_2 \rpL(g_1) \rpR(g_2) \Psi \begin{pmatrix} t & 0 \\ 0 & 0 \end{pmatrix} \chi(t) \norm[t]_{\A}^{-\frac{1}{2}} d^{\times}t $$
	defines a vector in the tensor product $\pi(\chi_1,\chi_2) \otimes \pi(\chi_1^{-1},\chi_2^{-1})$ resp. $\pi(\chi, \norm_{\A}^{\frac{1}{2}}) \otimes \pi(\chi^{-1}, \norm_{\A}^{-\frac{1}{2}})$ realized in the induced model, where the integrals in $t_1,t_2$ are Tate's integral.
	
\begin{definition}
	We also write $(f \times f)_{\Psi}(\cdot)$ for $f_{\Psi}(\cdot)$. We denote by $M$ resp. $W$ the image under the intertwining operator resp. Whittaker functional of $f$ with respect to one variable. For example, $(f \times W)(g_1,g_2;\chi_1,\chi_2) \in \pi(\chi_1,\chi_2) \otimes \Whi(\chi_1^{-1},\chi_2^{-1};\psi)$ is so defined that for any fixed $g_1$, the function $g_2 \mapsto (f \times W)(g_1,g_2;\chi_1,\chi_2)$ is the Whittaker function with respect to the additive character $\psi$ of $\F \backslash \A$ of the vector in the induced model given by $g_2 \mapsto (f \times f)_{\Psi}(g_1,g_2;\chi_1,\chi_2)$.
\end{definition}

\begin{lemma}
	We have the functional equation
	$$ (M \times f)_{\Psi}(g_1,g_2;\chi_1,\chi_2) = (f \times M)_{\Psi}(g_1,g_2;\chi_2,\chi_1). $$
\label{M2SecFE}
\end{lemma}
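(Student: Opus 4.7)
The plan is to establish the functional equation by direct unfolding: I will show that both sides, after applying the Jacquet integral representation $Mf(g) = \int_\A f(wn(u)g)\,du$ of the intertwining operator and a change of variables to matrix entries, yield the same integral over $\GL_2(\A)$.

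For the left-hand side, I expand $(M \times f)_\Psi(g_1, g_2; \chi_1, \chi_2) = \int_\A f_\Psi(wn(u)g_1, g_2; \chi_1, \chi_2)\,du$ using the definition of $f_\Psi$, obtaining an integral over $(u, t_1, t_2, y) \in \A \times (\A^\times)^2 \times \A$ of $\Psi$ evaluated at $g_1^{-1} n(-u)w^{-1}\begin{pmatrix} t_1 & y \\ 0 & t_2 \end{pmatrix} g_2$ weighted by $\chi_1(t_1)\chi_2(t_2)\norm[t_1 t_2]_\A^{-1/2}$. The product $n(-u)w^{-1}\begin{pmatrix} t_1 & y \\ 0 & t_2 \end{pmatrix} = \begin{pmatrix} -ut_1 & -t_2 - uy \\ t_1 & y \end{pmatrix}$ lets me change variables to the four entries $a, b, c, d$ of $h \in \GL_2(\A)$ (so $c = t_1$, $d = y$, $u = -a/c$, $t_2 = \det h / c$, with Jacobian $|c|_\A$). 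Collecting the multiplicative Haar measures and Mellin weights, this produces
\[
\int_{\GL_2(\A)} \Psi(g_1^{-1} h g_2)\,(\chi_1\chi_2^{-1})(c(h))\,\norm[c(h)]_\A^{-1}\,\chi_2(\det h)\,\norm[\det h]_\A^{-3/2}\,dh,
\]
where $c(h) = h_{21}$ denotes the $(2,1)$-entry.

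The right-hand side unfolds analogously: $(f \times M)_\Psi(g_1, g_2; \chi_2, \chi_1) = \int_\A f_\Psi(g_1, wn(u)g_2; \chi_2, \chi_1)\,du$ involves the right-multiplication identity $\begin{pmatrix} t_1 & y \\ 0 & t_2 \end{pmatrix} wn(u) = \begin{pmatrix} -y & t_1 - yu \\ -t_2 & -t_2 u \end{pmatrix}$, and after the analogous change of variables (with $y = -a$, $t_2 = -c$, $u = d/c$, $t_1 = -\det h / c$) the same $\GL_2(\A)$-integrand emerges. The $\chi$-swap $(\chi_1, \chi_2) \leftrightarrow (\chi_2, \chi_1)$ in the statement precisely compensates for the role-reversal of $t_1, t_2$ between left- and right-multiplication by $w$, so that the two $\GL_2(\A)$-integrals coincide as elements of $\pi(\chi_2, \chi_1) \otimes \pi(\chi_1^{-1}, \chi_2^{-1})$.

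The main technical obstacle will be the careful bookkeeping of Jacobians and central-character signs: depending on whether $M$ is unfolded as $\int f(wn(u)g)\,du$ or $\int f(w^{-1}n(u)g)\,du$, a factor $\chi_1\chi_2(-1)$ can appear in each computation, but the paper's convention is fixed so that signs on the two sides agree. Conceptually, the identity reflects the $w$-conjugation involution
\[
w^{-1}\begin{pmatrix} t_1 & y \\ 0 & t_2 \end{pmatrix} w = \begin{pmatrix} t_2 & 0 \\ -y & t_1 \end{pmatrix},
\]
which swaps $t_1 \leftrightarrow t_2$ (and hence the characters $\chi_1 \leftrightarrow \chi_2$) upon transferring the intertwining from one side of the doubled Godement section $f_\Psi$ to the other—this is the direct two-variable analogue of the one-variable identity $\Intw f_\Phi(s, \chi_1, \chi_2; g) = f_{\widehat{\Phi}}(-s, \chi_2, \chi_1; g)$ used earlier in Lemma \ref{EisZetaPoles}.
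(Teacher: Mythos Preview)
Your approach is correct and is essentially the paper's proof in different clothing: both unfold the intertwining integral together with the $\OFour_2$-integral into a four-fold integral and then perform the same change of variables. The paper reduces to $g_1=g_2=\id$ and uses the matrix identity
\[
n(-u)w^{-1}\begin{pmatrix} t_1 & x \\ 0 & t_2 \end{pmatrix} = \begin{pmatrix} -t_2 & u t_1 \\ 0 & -t_1 \end{pmatrix} w\, n(x/t_1)
\]
to pass directly from the left side to the right side, whereas you route both sides through a common $\GL_2(\A)$-integral; these are the same substitution viewed from two angles. One point you should make explicit: all of these unfoldings are only literally valid in a region of absolute convergence of the intertwining and Tate integrals, and the paper closes by invoking the uniqueness of analytic continuation. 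Your $\GL_2$-integral, with its weight $\norm[c(h)]_{\A}^{-1}$, does not converge absolutely for generic $\chi_1,\chi_2$ either, so the same caveat is needed in your write-up. (Also recheck the exponent on $\norm[\det h]_{\A}$: with $t_1 t_2=\det h$ and $da\,db\,dc\,dd = \norm[c]_{\A}\,du\,dt_1\,dt_2\,dy$, the bi-invariant Haar measure produces $\norm[\det h]_{\A}^{1/2}$ rather than $\norm[\det h]_{\A}^{-3/2}$; this is harmless for the argument since the same factor appears on both sides.)
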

\begin{proof}
	It suffices to prove the equation for $g_1=g_2=\id$, since the general case follows from this special case by taking $\rpL(g_1)\rpR(g_2)\Psi$ instead of $\Psi$. By definition, we have in the absolutely convergent region
\begin{align*}
	&\quad (M \times f)_{\Psi}(\id,\id;\chi_1,\chi_2) \\
	&= \int_{\A} \int_{(\A^{\times})^2} \int_{\A} \Psi \left( n(-u) w^{-1} \begin{pmatrix} t_1 & x \\ 0 & t_2 \end{pmatrix} \right) dx \chi_1(t_1) \chi_2(t_2) \norm[t_1t_2]^{-\frac{1}{2}} d^{\times} t_1 d^{\times} t_2 du \\
	&= \int_{\A} \int_{(\A^{\times})^2} \int_{\A} \Psi \left( \begin{pmatrix} -t_2 & ut_1 \\ 0 & -t_1 \end{pmatrix} w n \left( \frac{x}{t_1} \right) \right) dx \chi_1(t_1) \chi_2(t_2) \norm[t_1t_2]^{-\frac{1}{2}} d^{\times} t_1 d^{\times} t_2 du \\
	&=  \int_{\A} \int_{(\A^{\times})^2} \int_{\A} \Psi \left( \begin{pmatrix} t_1 & u \\ 0 & t_2 \end{pmatrix} w n (x) \right) du \chi_2(t_1) \chi_1(t_2) \norm[t_1t_2]^{-\frac{1}{2}} d^{\times} t_1 d^{\times} t_2 dx \\
	&= (f \times M)_{\Psi}(g_1,g_2;\chi_2,\chi_1).
\end{align*}
	The equation is thus proved by the uniqueness of analytic continuation.
\end{proof}

\begin{lemma}
\begin{itemize}
	\item[(1)] If $\norm[\chi(t)] = \norm[t]_{\A}^{\sigma}$ for some $\sigma > 5/2$, then the Whittaker function $(W \times W)_{\Psi}(g_1,g_2; \chi, *)$ is given by the absolutely convergent integral
	$$ \int_{\A^{\times}} \int_{\A^2} \rpL(g_1) \rpR(g_2) \Psi \left( n(-u_1) \begin{pmatrix} 0 & 0 \\ z & 0 \end{pmatrix} n(u_2) \right) \psi(-u_1-u_2) du_1du_2 \chi(z) \norm[z]_{\A}^{\frac{1}{2}} d^{\times}z. $$
	\item[(2)] If $\norm[\chi_j(t)] = \norm[t]_{\A}^{\sigma_j}$ for some $\sigma_1 > 5/2,  \sigma_2 < 1/2$, then $(M \times W)_{\Psi}(g_1,g_2;\chi_1,\chi_2)$ is given by the absolutely convergent integral
	$$ \int_{(\A^{\times})^2} \int_{\A} \OFour_1 \OFour_2 \rpL(g_1) \rpR(g_2) \Psi \begin{pmatrix} ut_1 & -t_1 \\ t_2 & ut_2 \end{pmatrix} \psi(-u) \chi_2^{-1}(t_1) \norm[t_1]_{\A}^{\frac{3}{2}} \chi_1(t_2) \norm[t_2]_{\A}^{-\frac{1}{2}} du d^{\times}t_1 d^{\times}t_2. $$
\end{itemize}
\label{EssD}
\end{lemma}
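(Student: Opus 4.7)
The plan is to prove both formulas by unfolding the defining integrals---the Jacquet integral $W_f(g)=\int_{\A} f(wn(u)g)\psi(-u)\,du$, respectively the intertwining integral $Mf(g)=\int_{\A} f(wn(u)g)\,du$---applied to the Godement-style section $(f\times f)_{\Psi}$ and then performing explicit changes of variables on the matrix arguments. Throughout, write $\Psi''=\rpL(g_1)\rpR(g_2)\Psi$ for brevity.

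For part (1), I apply $W$ in each variable, substitute the Godement formula for $(f\times f)_{\Psi}(\cdot,\cdot;\chi,*)$, and use the matrix computation
$$(wn(u_1))^{-1}\begin{pmatrix} t & y \\ 0 & 0 \end{pmatrix}wn(u_2)=\begin{pmatrix} u_1 y & -u_1(t-u_2 y) \\ -y & t-u_2 y\end{pmatrix},$$
which admits the rank-one factorization $\begin{pmatrix}-u_1 \\ 1\end{pmatrix}\bigl(-y,\,t-u_2 y\bigr)$. The change of variables $z=-y$ together with $u_2'=u_2-t/y$ brings this argument to $n(-u_1)\begin{pmatrix}0 & 0\\ z & 0\end{pmatrix}n(u_2')$. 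The residual integration over $t\in\A^{\times}$ against $\chi(t)\norm[t]_{\A}^{-1/2}\psi(t/z)\,d^{\times}t$ is a global Tate integral, which---after the substitution $t\mapsto tz$ and combined with the Jacobian conversion $dy=\norm[z]_{\A}\,d^{\times}z$---delivers the weight $\chi(z)\norm[z]_{\A}^{1/2}$ and the surviving phase $\psi(-u_2')$ of the target.

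For part (2), I apply $M$ in the first variable and $W$ in the second to $(f\times f)_{\Psi}(\cdot,\cdot;\chi_1,\chi_2)$. The analogous matrix computation yields
$$(wn(u_1))^{-1}\begin{pmatrix} t_1 & y \\ 0 & t_2\end{pmatrix}wn(u_2)=\begin{pmatrix}u_1 y + t_2 & -u_1(t_1-u_2 y)+u_2 t_2 \\ -y & t_1-u_2 y\end{pmatrix},$$
of generically full rank. Because no $\psi$-twist accompanies the $u_1$-integration, I apply Fourier inversion in the first row of $\Psi''$: the $u_1$-integral of $\psi$ against the linear form $u_1\bigl(y\alpha-(t_1-u_2 y)\beta\bigr)$ produces a Dirac-type constraint on the Fourier-dual variables $(\alpha,\beta)$, restricting them to a single line. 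Solving this constraint and substituting $(u,t_1',t_2')=(u_2-t_1/y,\,-\beta,\,-y)$ converts the matrix argument into $\begin{pmatrix}u t_1' & -t_1' \\ t_2' & u t_2'\end{pmatrix}$. The residual $t_1,t_2$ integrations factor as Tate integrals that, under $\sigma_1>5/2$ and $\sigma_2<1/2$, simplify to give the stated formula involving $\OFour_1\OFour_2\Psi''$. Alternatively, one can derive this form by combining the functional equation of Lemma \ref{M2SecFE} with the Whittaker-in-second-variable computation from part (1).

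The principal difficulty in both parts is convergence: certain intermediate Tate-type pieces (Fourier transforms of $\chi\cdot\norm[\cdot]_{\A}^{s}$ against oscillatory kernels) fail to be absolutely convergent after the change of variables, so the naive rearrangement of the quadruple integrals must be justified by truncation arguments or by appealing to the meromorphic continuation provided by Tate's global functional equation. The hypotheses $\sigma>5/2$ and $\sigma_1>5/2$, $\sigma_2<1/2$ are precisely calibrated so that the final displayed integrals are absolutely convergent; the identities then extend by analytic continuation from a regime in which every intermediate step is legitimate.
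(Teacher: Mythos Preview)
Your change-of-variables scheme in part (1) produces an expression of the shape
\[
\int_{\A}\int_{\A^2}\Psi''\left(n(-u_1)\begin{pmatrix}0&0\\z&0\end{pmatrix}n(u_2')\right)\psi(-u_1-u_2')\,du_1du_2'\,dz\cdot\int_{\A^{\times}}\chi(t)\norm[t]_{\A}^{-1/2}\psi(t)\,d^{\times}t,
\]
in which the variables have decoupled and the last factor no longer contains the Schwartz function at all. That last integral is \emph{not} a Tate integral---there is no Schwartz weight---and it does not converge absolutely for any $\sigma$, nor does it have an obvious regularized value equal to $1$. (It is formally a global gamma factor, which is $1$ only as a statement about a product of local $\varepsilon$-factors, not as an identity of convergent integrals.) So the step ``delivers the weight $\chi(z)\norm[z]_{\A}^{1/2}$'' is not justified, and the truncation/analytic-continuation remarks in your final paragraph do not rescue it: there is no parameter regime in which all the intermediate integrals in your rearrangement are simultaneously absolutely convergent.

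The paper avoids this by staging the computation. It first computes $(f\times W)_{\Psi}$ in the second variable only, applying Tate's global functional equation to the $t$-integral \emph{while $\Psi$ is still present} (this is the legitimate use of Tate). The result is the closed form
\[
(f\times W)_{\Psi}(g_1,g_2;\chi,*)=\int_{\A^{\times}}\OFour_2\rpL(g_1)\rpR(g_2)\Psi\begin{pmatrix}t & t^{-1}\\0&0\end{pmatrix}\chi(t)\norm[t]_{\A}^{-1/2}d^{\times}t,
\]
which converges for \emph{all} $\chi$; only then is the Jacquet integral in the first variable applied. For part (2) your ``alternative'' route via Lemma~\ref{M2SecFE} and the $(f\times W)$ computation is exactly what the paper does, and works; your primary route through direct Fourier inversion in the first row would hit the same decoupling problem as in part (1).
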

\begin{proof}
	(1) Fix $g_1$. Then the function $g_2 \mapsto f_{\Psi}(g_1,g_2; \chi, *)$ is a meromorphic section in $\pi(\chi^{-1}, \norm_{\A}^{-\frac{1}{2}})$. Its Whittaker function in $\Whi(\chi^{-1}, \norm_{\A}^{-\frac{1}{2}}; \psi)$, i.e. $(f \times W)_{\Psi}(g_1,g_2; \chi, *)$ can be computed via the defining integral first assuming $\sigma < -1/2$, then by meromorphic continuation to $\sigma > 1/2$. For $\sigma < -1/2$, we have
\begin{align*}
	(f \times W)_{\Psi}(g_1,g_2; \chi, *) &= \int_{\A} f_{\Psi}(g_1,wn(u)g_2; \chi, *) \psi(-u) du \\
	&= \int_{\A} \int_{\A^{\times}} \OFour_1 \OFour_2 \rpL(g_1) \rpR(wn(u)g_2) \Psi \begin{pmatrix} t & 0 \\ 0 & 0 \end{pmatrix} \chi^{-1}(t) \norm[t]_{\A}^{\frac{3}{2}} \psi(-u) d^{\times}t du
\end{align*}
	by Tate's global functional equation. For any $\Psi \in \Sch(\Mat_2(\A))$, we have
\begin{align}
	\OFour_1 \OFour_2 \rpR(wn(u)) \Psi \begin{pmatrix} x_1 & x_2 \\ x_3 & x_4 \end{pmatrix} &= \int_{\A^2} \Psi \begin{pmatrix} -y_2 & y_1-uy_2 \\ -x_4 & x_3-ux_4 \end{pmatrix} \psi(-y_1x_1-y_2x_2) dy_1dy_2 \nonumber \\
	&= \OFour_1 \OFour_2 \Psi \begin{pmatrix} -x_2-ux_1 & x_1 \\ -x_4 & x_3-ux_4 \end{pmatrix}. \label{F1F2RF}
\end{align}
	Hence we obtain
\begin{align*} 
	(f \times W)_{\Psi}(g_1,g_2; \chi, *) &= \int_{\A} \int_{\A^{\times}} \OFour_1 \OFour_2 \rpL(g_1) \rpR(g_2) \Psi \begin{pmatrix} -ut & t \\ 0 & 0 \end{pmatrix} \chi^{-1}(t) \norm[t]_{\A}^{\frac{3}{2}} \psi(-u) d^{\times}t du \\
	&= \int_{\A^{\times}} \OFour_2 \rpL(g_1) \rpR(g_2) \Psi \begin{pmatrix} t^{-1} & t \\ 0 & 0 \end{pmatrix} \chi^{-1}(t) \norm[t]_{\A}^{\frac{1}{2}} d^{\times}t \\
	&= \int_{\A^{\times}} \OFour_2 \rpL(g_1) \rpR(g_2) \Psi \begin{pmatrix} t & t^{-1} \\ 0 & 0 \end{pmatrix} \chi(t) \norm[t]_{\A}^{-\frac{1}{2}} d^{\times}t.
\end{align*}
	Since the right hand side is absolutely convergent for all $\chi$, it gives the desired meromorphic continuation to $\sigma > 1/2$, where we have
\begin{multline*}
	(W \times W)_{\Psi}(g_1,g_2; \chi, *) = \int_{\A} (f \times W)_{\Psi}(wn(u_1)g_1,g_2; \chi, *) \psi(-u_1) du_1 \\
	= \int_{\A^{\times}} \int_{\A^2} \rpL(g_1) \rpR(g_2) \Psi \left( n(-u_1) w^{-1} \begin{pmatrix} t & u_2 \\ 0 & 0 \end{pmatrix} \right) \psi(-u_1-u_2t^{-1}) \chi(t) \norm[t]_{\A}^{-\frac{1}{2}} du_1du_2 d^{\times}t,
\end{multline*}
	which is precisely the desired formula by the change of variables $u_2 = u_2t$, $t=z$. The order change of integrations is justified by
	$$ \int_{\A^2} \max_{x \in \A} \extnorm{\rpL(g_1) \rpR(g_2) \Psi \begin{pmatrix} -u_1t & x \\ t & u_2 t  \end{pmatrix}} du_1 du_2 \ll_N \prod_v \min(1,\norm[t]_v^{-N}) \cdot \norm[t]_{\A}^{-2}, $$
	whose integral against $\norm[t]_{\A}^{\sigma+1/2}$ over $\A^{\times}$ is absolutely convergent if $\sigma + 1/2 - 2 > 1$, i.e. $\sigma > 5/2$.
	
\noindent (2) It suffices to prove the equation for $g_1=g_2=\id$. By Lemma \ref{WhiInv} and \ref{M2SecFE}, we have
	$$ (M \times W)_{\Psi}(\id,\id;\chi_1,\chi_2) = (f \times W)_{\Psi}(\id,\id;\chi_2,\chi_1). $$
	The right hand side can be computed by definition for $\sigma_1 - \sigma_2 > 1$ and (\ref{F1F2RF}) as
\begin{align*}
	&\quad (f \times W)_{\Psi}(\id,\id;\chi_2,\chi_1) = \int_{\A} (f \times f)_{\Psi}(\id,wn(u);\chi_2,\chi_1) \psi(-u) du \\
	&= \int_{\A} \int_{(\A^{\times})^2} \OFour_1 \OFour_2 \rpR(wn(u)) \Psi \begin{pmatrix} t_1 & \\ & t_2 \end{pmatrix} \chi_2^{-1}(t_1) \norm[t_1]_{\A}^{\frac{3}{2}} \chi_1(t_2) \norm[t_2]_{\A}^{-\frac{1}{2}} \psi(-u) d^{\times}t_1 d^{\times}t_2 du \\
	&= \int_{\A} \int_{(\A^{\times})^2} \OFour_1 \OFour_2 \Psi \begin{pmatrix} -ut_1 & t_1 \\ -t_2 & -ut_2 \end{pmatrix} \chi_2^{-1}(t_1) \norm[t_1]_{\A}^{\frac{3}{2}} \chi_1(t_2) \norm[t_2]_{\A}^{-\frac{1}{2}} \psi(-u) d^{\times}t_1 d^{\times}t_2 du.
\end{align*}
	The above integral is absolutely convergent by
	$$ \int_{\A} \max_{x \in \A} \extnorm{ \OFour_1 \OFour_2 \Psi \begin{pmatrix} x & t_1 \\ -t_2 & -ut_2 \end{pmatrix} } du \ll_N \prod_v \min(1,\norm[t_1]_v^{-N}) \min(1,\norm[t_2]_v^{-N}) \cdot \norm[t_2]_{\A}^{-1}, $$
	whose integral against $\norm[t_1]_{\A}^{3/2-\sigma_2} \norm[t_2]_{\A}^{\sigma_1-1/2}$ over $\A^{\times} \times \A^{\times}$ is absolutely convergent if $\sigma_1 > 5/2, \sigma_2 < 1/2$. The desired equation follows by order change of integrations and the change of variables $t_j \mapsto -t_j$.
\end{proof}

\begin{corollary}
	Assume $\lambda \in D'$. Recall the summation convention in Remark \ref{SumConv}.
\begin{itemize}
	\item[(1)] Let $\ell_0$ be the tempered distribution given by
	$$ \ell_0(\Psi) := \Vsum \int_{\A^2} \rpL \begin{pmatrix} \xi_1 & \\ & 1 \end{pmatrix} \rpR \begin{pmatrix} \xi_2 \xi & \\ & \xi \end{pmatrix} \Psi \left( n(-u_1) \begin{pmatrix} 0 & 0 \\ 1 & 0 \end{pmatrix} n(u_2) \right) \psi(-u_1-u_2) du_1du_2. $$
	Then (\ref{ResMDEx1}) is equal to the absolutely convergent integral
	$$ \int_{(\F^{\times} \backslash \A^{\times})^3} \ell_0 \left( \rpL \begin{pmatrix} t_1 & \\ & 1 \end{pmatrix} \rpR \begin{pmatrix} t_2 t & \\ & t \end{pmatrix} \Psi \right) \omega(t) \norm[t]_{\A}^{s_0+2} \chi_1(t_1)\norm[t_1]_{\A}^{s_1-1} \chi_2(t_2)\norm[t_2]_{\A}^{s_2+1} d^{\times}t d^{\times}t_1 d^{\times}t_2. $$
	\item[(2)] Let $\ell_1$ be the tempered distribution given by
	$$ \ell_1(\Psi) := \Vsum \int_{\A} \OFour_1 \OFour_2 \rpL \begin{pmatrix} \xi_1 & \\ & 1 \end{pmatrix} \rpR \begin{pmatrix} \xi_2 \xi & \\ & \xi \end{pmatrix} \Psi \begin{pmatrix} u & -1 \\ 1 & u \end{pmatrix} \psi(-u) du. $$
	Then (\ref{ResMDEx2}) is equal to the absolutely convergent integral
	$$ \int_{(\F^{\times} \backslash \A^{\times})^3} \ell_1 \left( \rpL \begin{pmatrix} t_1 & \\ & 1 \end{pmatrix} \rpR \begin{pmatrix} t_2 t & \\ & t \end{pmatrix} \Psi \right) \omega(t) \norm[t]_{\A}^{s_0+2} \chi_1(t_1)\norm[t_1]_{\A}^{s_1-1} \chi_2(t_2)\norm[t_2]_{\A}^{s_2+1} d^{\times}t d^{\times}t_1 d^{\times}t_2. $$
	\item[(3)] Let $\ell_2$ be the tempered distribution given by
	$$ \ell_2(\Psi) := \Vsum \OFour_2 \OFour_4 \rpL \begin{pmatrix} \xi_1 & \\ & 1 \end{pmatrix} \rpR \begin{pmatrix} \xi_2 \xi & \\ & \xi \end{pmatrix} \Psi \begin{pmatrix} -1 & -1 \\ 1 & -1 \end{pmatrix}. $$
	Then (\ref{ResMDEx3}) is equal to the absolutely convergent integral
	$$ \int_{(\F^{\times} \backslash \A^{\times})^3} \ell_2 \left( \rpL \begin{pmatrix} t_1 & \\ & 1 \end{pmatrix} \rpR \begin{pmatrix} t_2 t & \\ & t \end{pmatrix} \Psi \right) \omega(t) \norm[t]_{\A}^{s_0+2} \chi_1(t_1)\norm[t_1]_{\A}^{s_1-1} \chi_2(t_2)\norm[t_2]_{\A}^{s_2+1} d^{\times}t d^{\times}t_1 d^{\times}t_2. $$
\end{itemize}
\label{RResEss}
\end{corollary}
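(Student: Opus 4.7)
The plan is to treat each of the three residues by identifying which factor of $M_3(\lambda, \Psi \mid \chi, s)$ in (\ref{SpecMDCont}) produces the pole at the stated point, computing that residue explicitly by (\ref{TateIntRes}) applied through (\ref{GJZetaPS}) or by Lemma \ref{EisZetaPoles}, and then collapsing the remaining sum over the orthogonal basis $\Bas(\chi, \omega\chi^{-1})$ by passing to Whittaker / intertwined models and invoking Lemma \ref{EssD}. A routine change of variables $(t_1, t_2, t)$ in the outer integration should then match the result with the claimed $\ell_i$-formula.

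For (\ref{ResMDEx1}) at $\chi = \omega$, $s = (s_0+1)/2$, only the Godement--Jacquet factor develops a pole, because in (\ref{GJZetaPS}) the internal character $\omega\chi^{-1}$ becomes trivial and the $t_2$-Tate integral is singular at $s = s'$. By (\ref{TateIntRes}) its residue is the bilinear form $\zeta_\F^{*} \int_{\A^\times}\!\int_\A {}_{e_1^\vee}\!\Psi_{e_2} \bigl(\begin{smallmatrix} t_1 & y \\ 0 & 0\end{smallmatrix}\bigr) dy \cdot \omega(t_1) \norm[t_1]_\A^{s_0+1} d^\times t_1$ in $(e_1^\vee, e_2)$. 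I would then rewrite the two surviving Hecke--Jacquet--Langlands factors as integrals of Whittaker functions by Lemma \ref{GlobZetaCalAlt}(2), perform the Parseval sum over $\Bas(\omega, \mathbbm{1})$, and identify the collapsed integrand as the $(W \times W)_{\Psi}$ integrand of Lemma \ref{EssD}(1), with the Tate variable $z$ there absorbing into the outer $t$-integration to produce $\ell_0$.

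The cases (\ref{ResMDEx2}) and (\ref{ResMDEx3}) follow the same template but with the pole now in one of the two Eisenstein zeta factors, so that either an intertwining operator (for (\ref{ResMDEx2})) or a pointwise evaluation (for (\ref{ResMDEx3})) enters. For (\ref{ResMDEx2}) at $\chi = \omega\chi_1$, Lemma \ref{EisZetaPoles}(3) gives the residue of the $e_1$-factor as $\zeta_\F^{*} \Intw e_1(s_1 + (s_0+1)/2)(\mathbbm{1})$, placing the $e_1$-slot in the intertwined induced model while the $e_2^\vee$-slot stays Whittaker; this is exactly the structure of $(M \times W)_{\Psi}$ in Lemma \ref{EssD}(2), with the matrix $\bigl(\begin{smallmatrix} u & -1 \\ 1 & u\end{smallmatrix}\bigr)$ in $\ell_1$ appearing as the $(t_1, t_2) = (1,1)$ specialization of the matrix in that lemma and the leftover $\A^\times$-parameters absorbed into the outer triple integration. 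For (\ref{ResMDEx3}) at $\chi = \chi_2$, the residue from Lemma \ref{EisZetaPoles}(2) (after the $-s$ sign in $\eis(-s, e_2^\vee)$) is a pointwise value $\zeta_\F^{*} e_2^\vee(\mathbbm{1})$; the Parseval collapse on both slots now uses the reproducing-kernel of $V_{\chi_2, \omega\chi_2^{-1}} \cap L^2(\gp{K})$, and a further degenerate specialization of the method of Lemma \ref{EssD}(2) (in which no Whittaker $u$-integration survives) yields the pointwise evaluation of $\OFour_2 \OFour_4 \Psi$ at the single invertible matrix $\bigl(\begin{smallmatrix}-1 & -1 \\ 1 & -1\end{smallmatrix}\bigr)$ defining $\ell_2$.

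The absolute convergence in $\lambda \in D'$ reduces, once the distributions $\ell_i$ are identified, to Tate-integral convergence in the outer $(t_1, t_2, t)$ against the partial Fourier transforms of $\Psi$ evaluated on the relevant degenerate orbits; the inequalities $\Re \tilde s_1 - \Re \tilde s_0 > 1$ and $\Re \tilde s_2 - \Re \tilde s_1 > 1$ cutting out $D'$ are exactly what is needed, by estimates of the same flavor as Proposition \ref{PoissonSEst} and Lemma \ref{GeomConv}. The hard part will be the Parseval collapse itself: the basis sum $\sum_{e_1, e_2}$ and the outer Tate-type integrals must be interchanged in a range where the continuous spectral integral already converges but the outer integrals are not yet absolutely convergent. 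I plan to handle this by first performing the collapse in the absolutely convergent spectral region (keeping the $\tau$-contour on $\Re s = 0$), recognizing the collapsed kernel via the Godement--Jacquet integral as a matrix coefficient paired with $\Psi$, and only then shifting the $s$-contour to the stated residue points, following the same contour-shift template as in the proof of Proposition \ref{SpecSideAC}.
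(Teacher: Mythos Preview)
Your plan for (1) and (2) is essentially the paper's: identify the singular factor, take its residue via (\ref{TateIntRes}) or Lemma \ref{EisZetaPoles}, pass the surviving $\GL_2 \times \GL_1$ zeta integrals to Whittaker form by Lemma \ref{GlobZetaCalAlt}(2), collapse the orthogonal basis to $(W \times W)_\Psi$ resp.\ $(M \times W)_\Psi$, and invoke Lemma \ref{EssD}(1) resp.\ (2). One point you have not flagged and will need: the bisection $f_\Psi(\cdot,\cdot;\chi_1,\chi_2)$ lives in a tensor product whose two factors carry central characters $\chi_1\chi_2$ and $(\chi_1\chi_2)^{-1}$, whereas the flat sections $e_1(s)$, $e_2^\vee(-s)$ have central characters $\omega$, $\omega^{-1}$. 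The paper inserts explicit twists by $\norm_\A^{\pm(1+\tilde s_0)}$ to reconcile these before the Parseval collapse; without this bookkeeping the exponents in the outer $(t_1,t_2)$-integration will not come out right.

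For (3) your description drifts from what actually works. The paper does \emph{not} use Lemma \ref{EssD} here. After the residue produces $e_2^\vee(\id)$ on the second slot, the surviving $e_1$-zeta integral is rewritten via Lemma \ref{GlobZetaCalAlt}(1) as an integral of $e_1(s,wn(t))$ over $t \in \A^\times$; both slots are then pointwise evaluations and the basis collapse is immediate. The remaining step is the elementary identity (\ref{F2F4LF}) for $\OFour_2\OFour_4 \rpL(wn(t))\Psi$, which yields the matrix $\bigl(\begin{smallmatrix}-t t_1 & -t_2 \\ t_1 & -t t_2\end{smallmatrix}\bigr)$ and hence $\ell_2$ after a change of variables. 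There is no ``degenerate specialization of Lemma \ref{EssD}(2)'' in sight, and your ``reproducing-kernel on both slots'' phrasing obscures that one slot still carries a genuine $\A^\times$-integral coming from Lemma \ref{GlobZetaCalAlt}(1).

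Your final paragraph introduces a difficulty that is not present. Corollary \ref{RResEss} is a direct computation of three residues, each already a finite-dimensional (in the $\gp{K}$-type sense) sum over $\Bas(\chi,\omega\chi^{-1})$ for a single fixed $\chi$; there is no spectral $\tau$-integral to interchange and no contour shift. The absolute convergence in $D'$ follows from the explicit integrands once Lemma \ref{EssD} (for (1),(2)) or the direct formula (for (3)) is in hand, exactly as in the convergence arguments already given in Lemma \ref{GeomConv}.
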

\begin{proof}
	(1) Applying (\ref{TateIntRes}), (\ref{GJZetaPS}) and Lemma \ref{GlobZetaCalAlt}, we identify (\ref{ResMDEx1}) with
\begin{multline*}
	\sum_{e_1,e_2 \in \Bas(\omega,\id)} \int_{\A^{\times}} \OFour_2 \left( {}_{e_1^{\vee}} \Psi_{e_2} \right) \begin{pmatrix} t & 0 \\ 0 & 0 \end{pmatrix} \omega(t)\norm[t]_{\A}^{1+2\tilde{s}_0} d^{\times}t \cdot \\
	\qquad \int_{\A^{\times}} W_{e_1} \left( \frac{1}{2}+\tilde{s}_0, a(y) \right) \chi_1(y) \norm[y]_{\A}^{\tilde{s}_1} d^{\times}y \cdot \int_{\A^{\times}} W_{e_2^{\vee}} \left( -\frac{1}{2}-\tilde{s}_0, a(y) \right) \chi_2(y) \norm[y]_{\A}^{\tilde{s}_2} d^{\times}y \\
	= \sum_{e_1,e_2 \in \Bas(\omega,\id)} \int_{\gp{K}^2} (f \times f)_{\Psi}(\kappa_1,\kappa_2; \omega \norm_{\A}^{\frac{3}{2}+2\tilde{s_0}}, *) e_1^{\vee}(\kappa_1) e_2(\kappa_2) d\kappa_1 d\kappa_2 \cdot \\
	\int_{\A^{\times}} W_{e_1 \otimes \norm_{\A}^{1+\tilde{s}_0}} \left( \frac{1}{2}+\tilde{s}_0, a(y) \right) \chi_1(y) \norm[y]_{\A}^{\tilde{s}_1-\tilde{s_0}-1} d^{\times}y \cdot \int_{\A^{\times}} W_{e_2^{\vee} \otimes \norm_{\A}^{-1-\tilde{s}_0}} \left( -\frac{1}{2}-\tilde{s}_0, a(y) \right) \chi_2(y) \norm[y]_{\A}^{\tilde{s}_2+\tilde{s}_0+1} d^{\times}y \\
	= \int_{(\A^{\times})^2} (W \times W)_{\Psi}(a(t_1),a(t_2);\omega \norm_{\A}^{\frac{3}{2}+2\tilde{s}_0},*) \chi_1(t_1) \norm[t_1]_{\A}^{\tilde{s}_1-\tilde{s}_0-1} \chi_2(t_2) \norm[t_2]_{\A}^{\tilde{s}_2+\tilde{s}_0+1} d^{\times}y_1 d^{\times}y_2.
\end{multline*}
	In the above we need the tricky twists by $\norm_{\A}^{\pm(1+\tilde{s}_0)}$ because $(f \times f)_{\Psi}(\cdot,\cdot; \omega \norm_{\A}^{3/2+2\tilde{s_0}}, *) \in \pi(\omega \norm_{\A}^{3/2+2\tilde{s}_0}, \norm_{\A}^{1/2}) \otimes \pi(\omega^{-1} \norm_{\A}^{-3/2-2\tilde{s}_0}, \norm_{\A}^{-1/2})$ has central characters $\omega \norm_{\A}^{2+2\tilde{s}_0}$ resp. $\omega^{-1} \norm_{\A}^{-2-2\tilde{s}_0}$ in the first resp. second variable. While the flat section $e_1(1/2+\tilde{s}_0) \in \pi(\omega \norm_{\A}^{1/2+\tilde{s}_0}, \norm_{\A}^{-1/2-\tilde{s}_0})$ resp. $e_2^{\vee}(-1/2-\tilde{s}_0) \in \pi(\omega^{-1} \norm_{\A}^{-1/2-\tilde{s}_0}, \norm_{\A}^{1/2+\tilde{s}_0})$, we must apply the indicated twists to make the relevant representations dual to each other. This subtlety re-occurs in the proof of (2) and (3) below, and we will not mention it again. Inserting Lemma \ref{EssD} (1), this is equal to
\begin{align*}
	&\int_{(\A^{\times})^2} \left( \int_{\A^{\times}} \int_{\A^2} \rpL(a(t_1)) \rpR(a(t_2)) \Psi \left( n(-u_1) \begin{pmatrix} 0 & 0 \\ t & 0 \end{pmatrix} n(u_2) \right) \psi(-u_1-u_2) du_1du_2 \omega(t) \norm[t]_{\A}^{2+2\tilde{s}_0} d^{\times}t \right) \cdot \\
	&\qquad \chi_1(t_1) \norm[t_1]_{\A}^{\tilde{s}_1-\tilde{s}_0-1} \chi_2(t_2) \norm[t_2]_{\A}^{\tilde{s}_2+\tilde{s}_0+1} d^{\times}t_1 d^{\times}t_2.
\end{align*}
	It is equal to the stated formula by the absolute convergence. 
	
\noindent (2) Similarly we identify (\ref{ResMDEx2}) with
\begin{align*}
	&\quad \sum_{e_1,e_2 \in \Bas(\omega\chi_1,\chi_1^{-1})} \int_{(\A^{\times})^2} \OFour_2 \left( {}_{e_1^{\vee}} \Psi_{e_2} \right) \begin{pmatrix} t_1 & 0 \\ 0 & t_2 \end{pmatrix} \omega\chi_1(t_1) \norm[t_1]_{\A}^{1+\tilde{s}_0+\tilde{s}_1} \chi_1^{-1}(t_2) \norm[t_2]_{\A}^{\tilde{s}_0-\tilde{s}_1} d^{\times}t_1 d^{\times}t_2 \cdot \\
	&\qquad \qquad \Intw e_1 \left( \frac{1}{2}+\tilde{s}_1, \id \right) \cdot \int_{\A^{\times}} W_{e_2^{\vee}} \left( -\frac{1}{2}-\tilde{s}_1, a(y) \right) \chi_2(y) \norm[y]_{\A}^{\tilde{s}_2} d^{\times}y \\
	&= \int_{\A^{\times}} (M \times W)_{\Psi}(a(t_1),a(t_2);\omega\chi_1 \norm_{\A}^{\frac{3}{2}+\tilde{s}_0+\tilde{s}_1}, \chi_1^{-1} \norm_{\A}^{\frac{1}{2}+\tilde{s}_0-\tilde{s}_1}) \chi_2(t_2) \norm[t_2]_{\A}^{\tilde{s}_2+\tilde{s}_0+1} d^{\times}t_2.
\end{align*}
	We get the desired formula by inserting Lemma \ref{EssD} (2).
	
\noindent (3) Similarly we identify (\ref{ResMDEx3}) with
\begin{multline*}
	\sum_{e_1,e_2 \in \Bas(\chi_2,\omega\chi_2^{-1})} \int_{(\A^{\times})^2} \OFour_2 \left( {}_{e_1^{\vee}} \Psi_{e_2} \right) \begin{pmatrix} t_1 & 0 \\ 0 & t_2 \end{pmatrix} \chi_2(t_1) \norm[t_1]_{\A}^{1+\tilde{s}_0+\tilde{s}_2} \omega\chi_2^{-1}(t_2) \norm[t_2]_{\A}^{\tilde{s}_0-\tilde{s}_2} d^{\times}t_1 d^{\times}t_2 \cdot \\
	\int_{\A^{\times}} e_1 \left( \frac{1}{2}+\tilde{s}_2, wn(t) \right) \omega^{-1}\chi_1^{-1}\chi_2(t) \norm[t]_{\A}^{\frac{1}{2}+\tilde{s}_2-\tilde{s}_0} d^{\times}t \cdot e_2^{\vee}(\id) \\
	= \int_{\A^{\times}} \int_{(\A^{\times})^2} \OFour_2 \OFour_4 \rpL(wn(t)) \Psi \begin{pmatrix} t_1 & 0 \\ 0 & t_2 \end{pmatrix} \chi_2(t_1) \norm[t_1]_{\A}^{1+\tilde{s}_0+\tilde{s}_2} \omega^{-1}\chi_2(t_2) \norm[t_2]_{\A}^{1+\tilde{s}_2-\tilde{s}_0} d^{\times}t_1 d^{\times}t_2 \cdot \\
	\omega^{-1}\chi_1^{-1}\chi_2(t) \norm[t]_{\A}^{1+\tilde{s}_2-\tilde{s}_1} d^{\times}t \\
	= \int_{(\A^{\times})^3} \OFour_2 \OFour_4 \Psi \begin{pmatrix} -tt_1 & -t_2 \\ t_1 & -tt_2 \end{pmatrix} \cdot \\
	\chi_2(t_1) \norm[t_1]_{\A}^{1+\tilde{s}_0+\tilde{s}_2} \omega^{-1}\chi_2(t_2) \norm[t_2]_{\A}^{1+\tilde{s}_2-\tilde{s}_0} \omega^{-1}\chi_1^{-1}\chi_2(t) \norm[t]_{\A}^{1+\tilde{s}_2-\tilde{s}_1} d^{\times}t_1 d^{\times}t_2 d^{\times}t,
\end{multline*}
	where we have applied the following elementary relation, whose proof is similar to (\ref{F1F2RF})
\begin{equation} 
	\OFour_2 \OFour_4 \rpL(wn(t)) \Psi \begin{pmatrix} x_1 & x_2 \\ x_3 & x_4 \end{pmatrix} = \OFour_2 \OFour_4 \Psi \begin{pmatrix} -tx_1-x_3 & -x_4 \\ x_1 & x_2-tx_4 \end{pmatrix}. 
\label{F2F4LF}
\end{equation}
	The last expression is easily identified with the desired formula.
\end{proof}

	To end this subsection, we give the meromorphic continuation of $DS_0(\lambda,\Psi)$ defined in (\ref{DS0Bis}). By definition, we have for $\lambda \in D$
	$$ DS_0(\lambda, \Psi) = \int_{(\A^{\times})^2} \Tree{\widetilde{WW}^{(1)}}{\omega,s_0}{a(t_1)}{a(t_2)} \chi_1(t_1) \norm[t_1]_{\A}^{s_1-1} \chi_2(t_2) \norm[t_2]_{\A}^{s_2+1} d^{\times}t_1 d^{\times}t_2 $$
	via the Whittaker-Fourier expansions in each variable, where
\begin{align*} 
	\Tree{\widetilde{WW}^{(1)}}{\omega,s_0}{x}{y} &:= \int_{(\F \backslash \A)^2} \Tree{\widetilde{K K}^{(1)}}{\omega,s_0}{n(u_1)x}{n(u_2)y} \psi(-u_1-u_2) du_1 du_2 \\
	&= \int_{\A^2} \int_{\A^{\times}} \rpL_x \rpR_y \Psi \left( n(-u_1) w^{-1} \begin{pmatrix} 0 & z \\ 0 & 0 \end{pmatrix} w n(u_2) \right) \omega(z) \norm[z]_{\A}^{s_0+2} d^{\times}z \psi(-u_1-u_2) du_1 du_2. 
\end{align*}
	The equality of matrices
	$$ w^{-1} \begin{pmatrix} 0 & -1 \\ 0 & 0 \end{pmatrix} w = \begin{pmatrix} 0 & 0 \\ 1 & 0 \end{pmatrix} $$
	identifies $DS_0$ via (\ref{ResMDEx1}) and Corollary \ref{RResEss} (1) with
	$$ DS_0(\lambda,\Psi) = \frac{1}{\zeta_{\F}^*} \Res_{s=\frac{1+s_0}{2}} M_3(\lambda, \Psi \mid \mathbbm{1},s), \quad \lambda \in D'. $$
	The right hand side gives the meromorphic continuation of $DS_0(\lambda,\Psi)$.

	\subsection{Fourth Moment Side}
	\label{4MAC}
	
	Based on (\ref{GeomSide}), we shall give another expression of the meromorphic continuation of $\Theta(\lambda, \Psi)$ from $\lambda \in D$ to $D_0$. Along the way, we complete the proof of Lemma \ref{GeomConv}.
	
	$DG_j, 5 \leq j \leq 8$ are given as higher dimensional global Tate integrals in (\ref{II_21CE}) to (\ref{II_24CE}), hence have obvious meromorphic continuation to $\C^3$. We are left with $II_i$ for $i \neq 2$.
	
	(\ref{II_1CE}) is still valid in $D_0$, giving the analytic continuation of $II_1(\lambda, \Psi)$ therein. Fix $\lambda = (s_0,s_1,s_2) \in D_0$. We shift the contour back to $\Re s = 1/2$, crossing four poles of the Tate integrals (because $\norm[\Re s_i'] < 1/2$ for $(s_0,s_1,s_2) \in D_0$) and get
\begin{equation}
	II_1(\lambda, \Psi) = M_4(\lambda, \Psi) + \sum_{j=1}^4 DG_j(\lambda, \Psi), \label{II_1AC}
\end{equation}
	where $M_4$ and $DG_j$ are given by (\ref{II_10AC}) and (\ref{II_11AC}) to (\ref{II_14AC}).
	
	We can easily identify the distributions $\ell_1(\Psi)$ resp. $\ell_2(\Psi)$ in Corollary \ref{RResEss} as
	$$ \ell_1(\Psi) = \Vsum \int_{\A} \OFour_1 \OFour_2 \Psi \begin{pmatrix} -\xi_2 u & \xi_2 \\ \xi_3 & \xi_3 u \end{pmatrix} \psi(-u \xi) du = - \Shoulder{\Delta \Delta_4}{\id}{\id}, $$
	$$ \ell_2(\Psi) = \Vsum_{\xi_1 \xi_2 + \xi_3 \xi_4 = 0} \OFour_2 \OFour_4 \Psi \begin{pmatrix} \xi_1 & \xi_2 \\ \xi_3 & \xi_4 \end{pmatrix} = - \Shoulder{\Delta \Delta_3}{\id}{\id}. $$
	Hence $II_3$ and $II_4$ are absolutely convergent in $D'$ by Corollary \ref{RResEss}, and we have
\begin{equation} 
	II_3(\lambda, \Psi) = - DS_4(\lambda, \Psi) = - \frac{1}{\zeta_{\F}^*} \Res_{s=s_2+\frac{1-s_0}{2}} M_3(\lambda, \Psi \mid \chi_2, s), 
\label{II_3AC}
\end{equation}
\begin{equation} 
	II_4(\lambda, \Psi) = - DS_5(\lambda, \Psi) = - \frac{1}{\zeta_{\F}^*} \Res_{s=s_1+\frac{1+s_0}{2}} M_3(\lambda, \Psi \mid \omega\chi_1, s), 
\label{II_4AC}
\end{equation}
	justifying their meromorphic continuation.
	
	We summarize the above discussion in the following proposition.
\begin{proposition}
	The meromorphic continuation of $\Theta(\lambda, \Psi)$ from $\lambda \in D'$ to $D_0-H'$ is given by
\begin{align*}
	\Theta(\lambda, \Psi) = M_4(\lambda, \Psi) + \sum_{j=1}^8 DG_j(\lambda, \Psi) - DS_4(\lambda, \Psi) - DS_5(\lambda, \Psi),
\end{align*}
	where $M_4$ and $DG_j$ are given by (\ref{II_10AC}) and (\ref{II_11AC}) to (\ref{II_24CE}), and $DS_4$ \& $DS_5$ are given by (\ref{II_3AC}) \& (\ref{II_4AC}); and $H'=H'(\omega,\chi_1,\chi_2)$ is the union of hyperplanes where two of the above residues in the degenerate terms may merge, which has measure $0$.
\label{GeomSideAC}
\end{proposition}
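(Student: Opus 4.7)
The plan is to assemble \eqref{GeomSide} with the three analytic-continuation identities (\ref{II_1AC}), (\ref{II_3AC}), (\ref{II_4AC}) already proved (or sketched) in the preceding discussion, and in particular to justify the contour shift that turns $II_1$ into $M_4$ plus the first four geometric degenerate terms. The starting point is the identity
\begin{equation*}
\Theta(\lambda,\Psi) = II_1(\lambda,\Psi) + II_3(\lambda,\Psi) + II_4(\lambda,\Psi) + \sum_{j=5}^{8} DG_j(\lambda,\Psi), \qquad \lambda \in D',
\end{equation*}
which is \eqref{GeomSide} together with the direct identification $II_2(\lambda,\Psi) = \sum_{j=5}^{8} DG_j(\lambda,\Psi)$ recorded in the proof of Lemma \ref{GeomConv}. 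Each of the four $DG_j$, $5\le j\le 8$, is literally a three-dimensional Tate integral by \eqref{II_21CE}--\eqref{II_24CE}, hence automatically admits meromorphic continuation to $\lambda\in\C^3$; nothing needs to be checked for these.

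For the $II_1$ term I would start from the Mellin-type expression \eqref{II_1CE}, valid with an arbitrary $c$ satisfying $c + \Re s_i' > 1$ for $i=1,\dots,4$. Since $\max_i |\Re s_i'| < 1/2$ on $D_0$, one may fix $\lambda \in D_0$ and shift the contour from an initial $\Re s = c \gg 1$ down to $\Re s = 1/2$. The integrand is a four-fold Tate zeta integral in the variables $x_1,\dots,x_4$ against the characters $\eta_i\chi\cdot\norm_{\A}^{s+s_i'}$, so by \eqref{TateIntRes} the only poles of the meromorphic continuation occur when $\chi = \eta_i^{-1}$ and $s + s_i' = 1$ (a simple pole with residue $\zeta_{\F}^*$ times an evaluation at $x_i = 0$) or $s + s_i' = 0$ (simple pole with residue $-\zeta_{\F}^* \OFour_i$-evaluation at $x_i=0$). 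On $D_0$ the real parts of $1 - s_i'$ lie in $(1/2, 3/2)$ while $-s_i'$ lies in $(-1/2,1/2)$, so shifting from $c$ to $1/2$ crosses precisely the four poles of the first type, one for each $i$. A direct match of residues with \eqref{II_11AC}--\eqref{II_14AC} then yields \eqref{II_1AC},
\begin{equation*}
II_1(\lambda,\Psi) = M_4(\lambda,\Psi) + \sum_{j=1}^{4} DG_j(\lambda,\Psi),
\end{equation*}
and the right-hand side manifestly defines a meromorphic function on $D_0$.

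For $II_3$ and $II_4$ the work is already done by Corollary \ref{RResEss}: the integrands $\Shoulder{\Delta\Delta_3}{\id}{\id}$ and $\Shoulder{\Delta\Delta_4}{\id}{\id}$ coincide (up to sign) with the distributions $\ell_2(\Psi)$ and $\ell_1(\Psi)$ appearing in parts (3) and (2) of that corollary. Substituting the two integral representations supplied there and recognising the parameters of $DS_4, DS_5$ from \S\ref{DegTerms} gives \eqref{II_3AC} and \eqref{II_4AC}, namely $II_3(\lambda,\Psi) = -DS_4(\lambda,\Psi)$ and $II_4(\lambda,\Psi) = -DS_5(\lambda,\Psi)$ as identities of meromorphic functions on $D_0$. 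Plugging these three identifications into \eqref{GeomSide} proves the proposition.

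I expect the genuinely delicate step to be the contour shift for $II_1$: one must justify in a strip containing both $\Re s = c$ and $\Re s = 1/2$ a polynomial bound in $|\Im s|$ on the inner Tate integral, uniform in $\lambda \in D_0$, to legitimately push the contour and sum the residues via the standard rectangular-contour argument. This is analogous to the contour argument in Proposition \ref{SpecSideAC}, and the necessary decay follows from the rapid decay of $\OFour_2\OFour_3\Psi$ in $\Sch(\Mat_2(\A))$ together with the standard convexity bound for Tate $L$-factors; after that, each step of the proof is a bookkeeping exercise matching residues to the explicit formulas \eqref{II_11AC}--\eqref{II_14AC}.
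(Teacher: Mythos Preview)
Your proposal is correct and follows essentially the same route as the paper: start from \eqref{GeomSide}, continue each $DG_j$ ($5\le j\le 8$) as a Tate integral, use \eqref{II_1CE} with large $c$ to continue $II_1$ into $D_0$ and then shift the contour to $\Re s=1/2$ picking up the four residues \eqref{II_11AC}--\eqref{II_14AC}, and identify $II_3,II_4$ with $-DS_4,-DS_5$ via Corollary \ref{RResEss}. Your extra care about the vertical-line decay needed for the contour shift is more than the paper spells out (it treats the shift as routine for Tate integrals of Schwartz functions), but otherwise the arguments coincide.
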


\section{Analysis of Degenerate Terms}

	We prove Proposition \ref{DegMainRel} in this section. Henceforth, we assume $\omega = \chi_1 = \chi_2 = \mathbbm{1}$. In this case, we can work out
	$$ H \cup H' = \left\{ \lambda = (s_0,s_1,s_2) \in \C^3 \ \middle| \ s_1(s_1+s_0-s_2)(s_2-s_0) \cdot s_2(s_2+s_1)(s_0+s_1) = 0 \right\}. $$ 
	
	Suppose $\delta > 0$ is small enough so that $\zeta_{\F}(s)\zeta_{\F}(1+s) \neq 0$ for $\norm[s] < 2\delta$. Assume $\Norm[\lambda] < \delta/4$ with $\lambda \notin H \cup H'$. The terms $DG_j(\lambda), 1 \leq j \leq 4$ are the residues of all possible poles of $s \mapsto M_4(\lambda, \Psi \mid \id,s)$ in the region $\norm[s-1] < \delta$. On the circle $\norm[s-1] = \delta$, $M_4(\lambda, \Psi \mid \id, s)$ is regular and continuous in $\lambda$. Thus
	$$ \sum_{j=1}^4 DG_j(\lambda, \Psi) = \frac{1}{\zeta_{\F}^*} \int_{\norm[s-1]=\delta} M_4(\lambda, \Psi \mid \id, s) \frac{ds}{2\pi i}, $$
	which holds as $\lambda \to \vec{0}$, proving the regularity with
	$$ \sum_{j=1}^4 DG_j(\vec{0}, \Psi) = \frac{1}{\zeta_{\F}^*} \int_{\norm[s-1]=\delta} M_4(\vec{0}, \Psi \mid \id, s) \frac{ds}{2\pi i} = \frac{1}{\zeta_{\F}^*} \Res_{s=1} M_4(\vec{0}, \Psi \mid \id, s). $$
	Similarly, we have
	$$ \sum_{j=5}^8 DG_j(\lambda, \Psi) = -\frac{1}{\zeta_{\F}^*} \int_{\norm[s]=\delta} M_4(\lambda, \Psi \mid \id, s) \frac{ds}{2\pi i}, $$
	which holds as $\lambda \to \vec{0}$, proving the regularity with
	$$ \sum_{j=5}^8 DG_j(\vec{0}, \Psi) = -\frac{1}{\zeta_{\F}^*} \int_{\norm[s]=\delta} M_4(\vec{0}, \Psi \mid \id, s) \frac{ds}{2\pi i} = -\frac{1}{\zeta_{\F}^*} \Res_{s=0} M_4(\vec{0}, \Psi \mid \id, s). $$
	The first formula in Proposition \ref{DegMainRel} readily follows. The second formula in Proposition \ref{DegMainRel} can be proved in the same way. We leave the details to the reader.
	
\begin{remark}
	The regularity at $\lambda=(0,0,0)$ for other $\omega,\chi_1,\chi_2$ can be proven in a similar way. We leave the case-by-case details to the reader.
\end{remark}

\section{Proof of Compact Variation}

	\subsection{Mixed Moment Side}
	
\begin{theorem}[Godement-Jacquet pre-trace formula: compact variation]
	The integral
	$$ \Tree{\widetilde{KK}_{\D}}{\omega,s_0}{x}{y} := \int_{\F^{\times} \backslash \A^{\times}} KK_{\D}(x,yz) \omega(z) \norm[z]_{\A}^{2s_0+2} d^{\times}z \cdot \norm[\nu_{\D}(x^{-1}y)]_{\A}^{s_0} $$
	is absolutely convergent for $\Re s_0 > 1$, and defines a smooth function in $\D^{\times}(\F) \backslash \D^{\times}(\A)$ with central character $\omega$ for $x$ resp. $\omega^{-1}$ for $y$. Its Fourier inversion with respect to $y$ in $\intL^2(\D^{\times}(\F) \backslash \D^{\times}(\A), \omega^{-1})$ converges normally for $(x,y) \in (\D^{\times}(\F) \backslash \D^{\times}(\A))^2$, and takes the form
\begin{align*}
	\Tree{\widetilde{KK}_{\D}}{\omega, s_0}{x}{y} &= \sideset{}{_{\substack{\pi \text{\ cuspidal} \\ \omega_{\pi} = \omega}}} \sum \sideset{}{_{\varphi_1, \varphi_2 \in \Bas(\pi)}} \sum \Zeta \left( s_0+\frac{1}{2}, \Psi, \beta(\varphi_2, \varphi_1^{\vee}) \right) \varphi_1(x) \varphi_2^{\vee}(y) \\
	&+ \frac{1}{\Vol([\D^{\times}])} \sideset{}{_{\substack{ \eta \in \widehat{\F^{\times} \backslash \A^{\times}} \\ \eta^2 = \omega }}} \sum \left( \int_{\D^{\times}(\A)} \Psi(g) \eta(\nu_{\D}(g)) \norm[\nu_{\D}(g)]_{\A}^{s_0+1} dg \right) \eta(\nu_{\D}(x)) \overline{\eta(\nu_{\D}(y))}.
\end{align*}
\label{GJPreTraceVar}
\end{theorem}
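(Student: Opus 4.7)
The plan is to adapt the proof of Theorem \ref{GJPreTrace}, exploiting the major simplification that $[\D^{\times}] = \D^{\times}(\F)\A^{\times} \backslash \D^{\times}(\A)$ is compact when $\D$ is a division quaternion algebra. As a consequence one needs no Whittaker or Eisenstein theory: the spectral decomposition of $\intL^2([\D^{\times}], \omega^{-1})$ is purely discrete, consisting of the cuspidal representations of $\D^{\times}(\A)$ with central character $\omega$ together with the one-dimensional subspaces spanned by characters $\eta \circ \nu_{\D}$ for $\eta \in \widehat{\F^{\times} \backslash \A^{\times}}$ with $\eta^2 = \omega$. Thus the proof collapses to the cuspidal and (scalar) residue pieces that already appear in part (2) of Theorem \ref{GJPreTrace}, with no continuous spectrum to analyze and no ``essential constant term'' to cancel.

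I will first establish absolute convergence. Since $\D$ is a division algebra, $\D^{\times}(\F) = \D(\F) - \{0\}$, and I expect the quaternionic analog of Proposition \ref{PoissonSEst}:
$$\sideset{}{_{\xi \in \D^{\times}(\F)}} \sum \extnorm{\Psi(\xi z)} \ll_N \min\bigl(\norm[\nu_{\D}(z)]_{\A}^{-1}, \norm[\nu_{\D}(z)]_{\A}^{-N}\bigr),$$
uniformly for $z$ in any compact subset of $\D^{\times}(\A) / \A^{\times}$. Combined with the compactness of $[\D^{\times}]$ and the assumption $\Re s_0 > 1$ (so that $\Re(2s_0+2) > 4$), this yields absolute convergence of the integral defining $\Tree{\widetilde{KK}_{\D}}{\omega, s_0}{x}{y}$. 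Stability of the bound under differentiation (since $\rpR_X \rpL_Y \Psi$ remains Schwartz) then delivers smoothness in $(x,y)$, together with the same estimate for every derivative.

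I will then apply Fourier inversion in $y$ on the compact space $[\D^{\times}]$. Since $\intL^2([\D^{\times}], \omega^{-1})$ decomposes purely discretely, a smooth vector admits a normally convergent Fourier expansion. The coefficients are computed by the same unfolding as in part (2) of Theorem \ref{GJPreTrace}: for cuspidal $\pi$ with $\omega_{\pi} = \omega$ and $\varphi_2 \in \Bas(\pi)$,
\begin{align*}
	\int_{[\D^{\times}]} \Tree{\widetilde{KK}_{\D}}{\omega, s_0}{x}{y} \varphi_2(y)\, dy &= \int_{\D^{\times}(\A)} \Psi(g) \norm[\nu_{\D}(g)]_{\A}^{s_0 + \frac{1}{2}} \varphi_2(xg)\, dg \\
	&= \sideset{}{_{\varphi_1 \in \Bas(\pi)}} \sum \Zeta\bigl( s_0 + \tfrac{1}{2}, \Psi, \beta(\varphi_2, \varphi_1^{\vee})\bigr) \varphi_1(x),
\end{align*}
which is exactly the Godement-Jacquet zeta integral of \cite{GoJa72}. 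The projection onto each one-dimensional piece $\eta \circ \nu_{\D}$ is computed by direct integration against $\overline{\eta(\nu_{\D}(y))}$; matching the central character $\omega^{-1}$ in $y$ forces the constraint $\eta^2 = \omega$, and the factor $\Vol([\D^{\times}])^{-1}$ normalizes $\eta \circ \nu_{\D}$ to a unit vector.

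The main obstacle, as usual for such pre-trace identities, is justifying normal convergence of the spectral sum uniformly in $(x,y) \in [\D^{\times}]^2$. My plan here is to apply sufficiently high powers of the Casimir operator in both variables before expanding: the resulting kernel is of the same shape with $\rpR_X \rpL_Y \Psi$ in place of $\Psi$, so each Fourier coefficient acquires a factor polynomial in the Laplacian eigenvalues of $\pi$, while Weyl's law on the compact quotient $[\D^{\times}]$ provides the summability. This is strictly easier than the $\GL_2$ version carried out in \cite[Theorem 2.16]{Wu14}, because no continuous spectrum or Eisenstein residues are present to handle.
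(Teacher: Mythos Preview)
Your approach is correct and matches the paper's own proof, which writes $\D \simeq \F^4$ in coordinates and applies Proposition \ref{PoissonSEst} four times to obtain the bound $\bigl(\min(\norm[z]_{\A}^{-1}, \norm[z]_{\A}^{-N})\bigr)^4$ for scalar $z$, then defers the remaining assertions to the argument of Theorem \ref{GJPreTrace}. One small slip: your displayed quaternionic bound should carry exponent $-2$ on $\norm[\nu_{\D}(z)]_{\A}$ (since $\nu_{\D}(z) = z^2$ for scalar $z$ and the actual decay is $\norm[z]_{\A}^{-4}$), which is exactly what your parenthetical threshold $\Re(2s_0+2) > 4$ already reflects.
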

\begin{proof}
	There exist $i,j,k=ij \in \D^{\times}(\F)$ so that $\D = \F \oplus \F i \oplus \F j \oplus \F k$ as $\F$-vector spaces. Hence $\Sch(\D(\A)) \simeq \Sch(\A^4)$. We have by definition
\begin{multline*}
	\Tree{\widetilde{KK}_{\D}}{\omega, s_0}{x}{y} \cdot \norm[\nu_{\D}(x^{-1}y)]_{\A}^{-s_0} = \int_{\F^{\times} \backslash \A^{\times}} \left( \sum_{\xi \in \D^{\times}(\F)} \rpL_x \rpR_y \Psi(\xi z) \right) \norm[z]_{\A}^{2s_0+2} d^{\times}z \\
	= \int_{\F^{\times} \backslash \A^{\times}} \left( \sum_{\alpha_0, \alpha_1,\alpha_2,\alpha_3 \in \F^{\times}} \rpL_x \rpR_y \Psi(z(\alpha_0+\alpha_1 i + \alpha_2 j + \alpha_3 k)) \right) \norm[z]_{\A}^{2s_0+2} d^{\times}z + \cdots
\end{multline*}
	where we have written out the most problematic term (for convergence) and omitted the others. Proposition \ref{PoissonSEst} implies for any $N > 1$
	$$ \sum_{\alpha_0, \alpha_1,\alpha_2,\alpha_3 \in \F^{\times}} \rpL_x \rpR_y \Psi(z(\alpha_0+\alpha_1 i + \alpha_2 j + \alpha_3 k))  \ll_N \left( \min(\norm[z]_{\A}^{-1}, \norm[z]_{\A}^{-N}) \right)^4. $$
	Hence the above integral converges absolutely for $\Re s_0 > 1$. The proofs of the other assertions are straightforward analogues of those of Theorem \ref{GJPreTrace}. We leave the details to the reader.
\end{proof}

	Theorem \ref{GJPreTraceVar} implies readily the decomposition in the region $\Re s_0 > 1$ of
\begin{align*}
	&\quad \Theta_{\D}(s_0, \Psi) \\
	&= \sideset{}{_{\substack{\pi \text{\ cuspidal} \\ \omega_{\pi} = \omega}}} \sum \sideset{}{_{\varphi_1, \varphi_2 \in \Bas(\pi)}} \sum \Zeta \left( s_0+\frac{1}{2}, \Psi, \beta(\varphi_2, \varphi_1^{\vee}) \right) \int_{[\E^{\times}]}\varphi_1(t_1) \Omega(t_1)^{-1} d^{\times}t_1 \int_{[\E^{\times}]} \varphi_2^{\vee}(t_2) \Omega(t_2) d^{\times}t_2 \\
	&+ \frac{\Vol([\E^{\times}])^2}{\Vol([\D^{\times}])} \sideset{}{_{\substack{ \eta \in \widehat{\F^{\times} \backslash \A^{\times}} \\ \eta^2 = \omega }}} \sum \left( \int_{\D^{\times}(\A)} \Psi(g) \eta(\nu_{\D}(g)) \norm[\nu_{\D}(g)]_{\A}^{s_0+1} dg \right) \cdot \mathbbm{1}_{\Omega = \eta \circ \nu_{\E}} \\
	&= M(s_0, \Psi) + DS(s_0, \Psi).
\end{align*}
	The terms on the right hand side have meromorphic continuation to $s_0 \in \C$ as we explained in \S \ref{SpTVar}.

	\subsection{Second Moment Side}
	
	The group action of $H = (\E^{\times} \times \E^{\times})/\F^{\times}$ on $\E^2 \simeq \D, (x,y) \mapsto x+yj$ is identified as
	$$ H \times \E^2 \to \E^2, \quad (t_1,t_2) \times (x_1,x_2) \mapsto (t_1^{-1}t_2 x_1, t_1^{-1}\bar{t}_2 x_2). $$
	The above action of $H$ is identified as the action of
	$$ \widetilde{H} := \left\{ (t_1,t_2) \in \E^{\times} \times \E^{\times} \ \middle| \ t_1 t_2^{-1} \in \E^1 \right\} < \E^{\times} \times \E^{\times} $$
by Hilbert's Theorem 90, where $\E^{\times} \times \E^{\times}$ acts component by component on $\E^2$ as
	$$ \left( \E^{\times} \times \E^{\times} \right) \times \E^2 \to \E^2, \quad (t_1,t_2) \times (x_1, x_2) \mapsto (t_1x_1,t_2x_2). $$
	Our chosen character of $H$ defined in (\ref{CharH}) is identified with the character on $\widetilde{H}$
\begin{equation}
	\Omega_{\widetilde{H}}: \widetilde{H}(\A) \to \C^{\times}, \quad (t_1,t_2) \mapsto \Omega(t_1).
\label{CharHVar}
\end{equation}
	Since $\D^{\times}(\F) = \left( \E + \E j \right) - \{ 0 \}$, we have the orbital decomposition for the action of $\E^{\times} \times \E^{\times}$
	$$ \D^{\times}(\F) = (\E^{\times} + \E^{\times}j) \bigsqcup \E^{\times} \bigsqcup \E^{\times}j. $$
	Consequently, we have another decomposition in the region $\Re s_0 > 1$ of
\begin{align*}
	\Theta_{\D}(s_0, \Psi) &= \int_{[\E^{\times}] \times \E^{\times} \backslash \A_{\E}^{\times}} \left( \sum_{\alpha,\beta \in \E^{\times}} \Psi(t_1^{-1}t_2\alpha+t_1^{-1}\bar{t}_2\beta j) \right) \Omega(t_1^{-1} t_2) \norm[t_1^{-1}t_2]_{\A_{\E}}^{s_0+1} d^{\times}t_1 d^{\times}t_2 + \\
	&\quad \int_{[\E^{\times}] \times \E^{\times} \backslash \A_{\E}^{\times}} \left( \sum_{\alpha \in \E^{\times}} \Psi(t_1^{-1}t_2 \alpha) \right) \Omega(t_1^{-1}t_2) \norm[t_1^{-1}t_2]_{\A_{\E}}^{s_0+1} d^{\times}t_1 d^{\times}t_2 + \\
	&\quad \int_{[\E^{\times}] \times \E^{\times} \backslash \A_{\E}^{\times}} \left( \sum_{\beta \in \E^{\times}} \Psi(t_1^{-1}\bar{t}_2 \beta) \right) \Omega(t_1^{-1}t_2) \norm[t_1^{-1}t_2]_{\A_{\E}}^{s_0+1} d^{\times}t_1 d^{\times}t_2 \\
	&= \int_{\E^{\times} \backslash \A_{\E}^{\times} \times \E^1(\F) \backslash \E^1(\A)} \left( \sum_{\alpha,\beta \in \E^{\times}} \Psi(t_1\alpha+t_1t_2\beta j) \right) \Omega(t_1) \norm[t_1]_{\A_{\E}}^{s_0+1} d^{\times}t_1 d^{\times}t_2 + \\
	&\quad \Vol([\E^{\times}]) \cdot \int_{\A_{\E}^{\times}} \Psi(t) \Omega(t) \norm[t]_{\A_{\E}}^{s_0+1} d^{\times}t_1 + \Vol([\E^{\times}]) \cdot \int_{\A_{\E}^{\times}} \Psi(tj) \Omega(t) \norm[t]_{\A_{\E}}^{s_0+1} d^{\times}t_1 \cdot \id_{\Omega \mid_{\E^1(\A)}=\id}.
\end{align*}
	The second resp. third term is simply $DG_1(s_0, \Psi)$ resp. $DG_2(s_0, \Psi)$, which has obvious meromorphic continuation to $s_0 \in \C$ regular at $s_0=0$. For the first term, we have by Proposition \ref{PoissonSEst}
	$$ \sum_{\alpha,\beta \in \E^{\times}} \extnorm{ \Psi(t_1\alpha+t_1t_2\beta j) } \ll_{p,N} \min(\norm[t_1]_{\A_{\E}}^{-1}, \norm[t_1]_{\A_{\E}}^{-N}) \norm[t_1t_2]_{\A_{\E}}^{-p}, \quad \forall p,N \geq 1. $$
	Thus for any $p > 1$ and $\Re s_0 > 1+p$, the function
	$$ g: \E^{\times} \backslash \A_{\E}^{\times} \to \C, \quad g(t) = \int_{\E^{\times} \backslash \A_{\E}^{\times} \times \E^1(\F) \backslash \E^1(\A)} \left( \sum_{\alpha,\beta \in \E^{\times}} \Psi(t_1\alpha+t_1t_2 t \beta j) \right) \Omega(t_1) \norm[t_1]_{\A_{\E}}^{s_0+1} d^{\times}t_1 d^{\times}t_2 $$
	is smooth, invariant by $\E^1(\F) \backslash \E^1(\A) < \E^{\times} \backslash \A_{\E}^{\times}$ and satisfies the bound
	$$ \norm[g(t)] \ll_p \min(\norm[t]_{\A}^{-1}, \norm[t]_{\A}^{-p}). $$
	The same bound holds if we replace $g(t)$ by $D^n.g(t)$ for the invariant differential $D$ on $s_{\E}(\R_+)$, where $s_{\E}$ is a/any section map of the adelic norm map on $\E^{\times} \backslash \A_{\E}^{\times}$. Thus $g(t)$ is Mellin invertible with
	$$ g(1) = \frac{1}{\Vol(\E^{\times} \R_+ \E^1(\A) \backslash \A_{\E}^{\times})} \sum_{\Xi \in \widehat{\E^{\times} \R_+ \E^1(\A) \backslash \A_{\E}^{\times}}} \int_{(c)} \int_{\E^{\times} \E^1(\A) \backslash \A_{\E}^{\times}} g(t) \Xi(t) \norm[t]_{\A_{\E}}^s d^{\times}t \frac{ds}{2\pi i}, $$
	where $1 < c < \Re s_0 - 1$ is arbitrary. Write $\Omega = \Omega_0 \norm[\cdot]_{\A_{\E}}^{i\tau_0}$ for a unique $\tau_0 \in \R$ such that $\Omega_0 \mid_{\R_+} = 1$. Moreover, we can rewrite the above double integral with a contour shift to $c' > \Re s_0 + 1$, obtaining
\begin{align*}
	&\quad \int_{(c)} \int_{\E^{\times} \E^1(\A) \backslash \A_{\E}^{\times}} g(t) \Xi(t) \norm[t]_{\A_{\E}}^s d^{\times}t \frac{ds}{2\pi i} \\
	&= \int_{(c)} \int_{\A_{\E}^{\times} \times \A_{\E}^{\times}} \Psi(t_1+t_2j) \Omega \Xi^{-1}(t_1)\norm[t_1]_{\A_{\E}}^{s_0+1-s} \Xi(t_2) \norm[t_2]_{\A}^s d^{\times}t_1 d^{\times}t_2 \frac{ds}{2\pi i} \\
	&= \int_{(c')} \int_{\A_{\E}^{\times} \times \A_{\E}^{\times}} \Psi(t_1+t_2j) \Omega \Xi^{-1}(t_1)\norm[t_1]_{\A_{\E}}^{s_0+1-s} \Xi(t_2) \norm[t_2]_{\A}^s d^{\times}t_1 d^{\times}t_2 \frac{ds}{2\pi i} + \\
	&\quad \zeta_{\E}^* \id_{\Xi=\Omega_0} \cdot \int_{\A_{\E}^{\times}} \left( \int_{\A_{\E}} \Psi(x_1+t_2j) dx_1 \right) \Omega(t_2) \norm[t_2]_{\A_{\E}}^{s_0} d^{\times}t_2 - \zeta_{\E}^* \id_{\Xi=\Omega_0} \cdot \int_{\A_{\E}^{\times}} \Psi(t_2j) \Omega(t_2) \norm[t_2]_{\A_{\E}}^{s_0+1} d^{\times}t_2.
\end{align*}
	All three terms have meromorphic continuation to $s_0 \in \C$. We have proved the meromorphic continuation of $\Theta_{\D}(s_0,\Psi)$ to $s_0 \in \C$.
	
	Assuming $\norm[s_0] < 1/4$, we shift the contour to get
\begin{align*}
	&\quad \int_{(c')} \int_{\A_{\E}^{\times} \times \A_{\E}^{\times}} \Psi(t_1+t_2j) \Omega \Xi^{-1}(t_1)\norm[t_1]_{\A_{\E}}^{s_0+1-s} \Xi(t_2) \norm[t_2]_{\A}^s d^{\times}t_1 d^{\times}t_2 \frac{ds}{2\pi i} \\
	&= \int_{(\frac{1}{2})} \int_{\A_{\E}^{\times} \times \A_{\E}^{\times}} \Psi(t_1+t_2j) \Omega \Xi^{-1}(t_1)\norm[t_1]_{\A_{\E}}^{s_0+1-s} \Xi(t_2) \norm[t_2]_{\A}^s d^{\times}t_1 d^{\times}t_2 \frac{ds}{2\pi i} + \\
	&\quad \zeta_{\E}^* \id_{\Xi=\Omega_0} \cdot \int_{\A_{\E}^{\times}} \Psi(t_2j) \Omega(t_2) \norm[t_2]_{\A_{\E}}^{s_0+1} d^{\times}t_2 + \zeta_{\E}^* \id_{\Xi = \id} \cdot \int_{\A_{\E}^{\times}} \left( \int_{\A_{\E}} \Psi(t_1+x_2j) dx_2 \right) \Omega(t_1) \norm[t_1]_{\A_{\E}}^{s_0} d^{\times}t_1.
\end{align*}
	We thus get
\begin{align*}
	\Theta_{\D}(s_0,\Psi) &= \frac{1}{\Vol([\E^1 \backslash \E^{\times}])} \sum_{\Xi \in \widehat{[\E^1 \backslash \E^{\times}]}} \int_{(\frac{1}{2})} \int_{\A_{\E}^{\times} \times \A_{\E}^{\times}} \Psi(t_1+t_2j) \Omega \Xi^{-1}(t_1)\norm[t_1]_{\A_{\E}}^{s_0+1-s} \Xi(t_2) \norm[t_2]_{\A}^s d^{\times}t_1 d^{\times}t_2 \frac{ds}{2\pi i} \\
	&+ \frac{\zeta_{\E}^*}{\Vol([\E^1 \backslash \E^{\times}])} \int_{\A_{\E}^{\times}} \left( \int_{\A_{\E}} \Psi(t_1+x_2j) dx_2 \right) \Omega(t_1) \norm[t_1]_{\A_{\E}}^{s_0} d^{\times}t_1 \\
	&+ \frac{\zeta_{\E}^*}{\Vol([\E^1 \backslash \E^{\times}])} \id_{\Omega \mid_{\E^1(\A)} = \id} \cdot \int_{\A_{\E}^{\times}} \left( \int_{\A_{\E}} \Psi(x_1+t_2j) dx_1 \right) \Omega(t_2) \norm[t_2]_{\A_{\E}}^{s_0} d^{\times}t_2 \\
	&+ \Vol([\E^{\times}]) \cdot \int_{\A_{\E}^{\times}} \Psi(t) \Omega(t) \norm[t]_{\A_{\E}}^{s_0+1} d^{\times}t_1 + \Vol([\E^{\times}]) \cdot \int_{\A_{\E}^{\times}} \Psi(t) \Omega(t) \norm[t]_{\A_{\E}}^{s_0+1} d^{\times}t_1 \cdot \id_{\Omega \mid_{\E^1(\A)}=\id} \\
	&= M_2(s_0,\Psi) + \sum_{j=1}^4 DG_j(s_0,\Psi).
\end{align*}
	Note that the terms containing $\id_{\Omega \mid_{\E^1(\A)} = \id}$ occur only if we can take $\Xi \norm[\cdot]_{\A_{\E}}^s = \Omega \norm[\cdot]_{\A_{\E}}^{s_0}$, in which case the restriction of $\Omega$ to $\mathbf{E}^1(\A)$ necessarily has to be trivial, since $\Xi$ is so.

\section{Appendix: Comparison with Period Approach}

	\subsection{Recall of Period Approach}
	
	For simplicity, we consider the case where $\omega=\chi_1=\chi_2=\mathbbm{1}$ are trivial character. The period approach, proposed by Michel-Venkatesh, considers the regularized integral 
	$$ \int_{\F^{\times} \backslash \A^{\times}}^{\reg} \eis(s_1,f_1) \cdot \eis(s_2,f_2) (a(t)) d^{\times}t $$
	along the diagonal torus of the product of two Eisenstein series constructed from smooth vectors $f_1,f_2 \in \Bas(\mathbbm{1}, \mathbbm{1})$. One expects a suitable Fourier inversion formula for this product, so that the projection to the space of a cuspidal representation $\pi$ gives the contribution
\begin{equation} 
	\sideset{}{_{\varphi \in \Bas(\pi)}} \sum \Pairing{\eis(s_1,f_1) \cdot \eis(s_2,f_2)}{\varphi} \int_{\F^{\times} \backslash \A^{\times}} \varphi(a(t)) d^{\times}t. 
\label{PerProj}
\end{equation}
	By Hecke-Jacuqet-Langlands' theory, the above integral represents $L(1/2, \pi)$. By the Rankin-Selberg theory, the automorphic Fourier coefficient represents $L(1/2+s_1,\pi^{\vee}) L(1/2+s_2, \pi^{\vee})$. Hence (\ref{PerProj}) represents a certain third moment. This approach has been made rigorous by Nelson \cite{Ne20}. Precisely, Nelson
\begin{itemize} 
	\item defined the above regularized integral as the special value at $s=0$ of the following integral (equivalent to $\ell_s$ defined by \cite[(10.3)]{Ne20})
	$$ I(s, s_1,s_2) := \int_{\F^{\times} \backslash \A^{\times}} \left( \eis(s_1,f_1) \cdot \eis(s_2,f_2) - \eisCst(s_1,f_1) \cdot \eisCst(s_2,f_2) \right) (a(t)) \norm[t]_{\A}^s d^{\times}t, $$
which converges for $\Re s \gg 1$ (depending on $s_1,s_2$) and has meromorphic continuation\footnote{In fact, this follows also from the theory of zeta function for finitely regularizable functions in \cite[\S 2.3]{Wu2}.} to $s \in \C$; 
	\item restricted to $\norm[s_1], \norm[s_2]$ small; 
	\item expanded $I(s,s_1,s_2)$ in two different ways and analytically continue the obtained equation to $s=s_1=s_2=0$ to deduce the relevant Motohashi-type formula.
\end{itemize} 
	
	We compare the period approach with our approach in this section. The comparison seems to be more convenient in the followng region
	$$ \left\{ (s_1,s_2) \in \C^2 \ \middle| \ \Re s_1, \Re s_2 - \Re s_1 > 1/2 \right\} $$
	instead of the original region\footnote{We do not know how to work directly with the original region. Nor can we unfortunately match the degenerate terms with Nelson's version.} considered by Nelson \cite[Theorem 10.2]{Ne20}, where $\norm[s_1]$ and $\norm[s_2]$ are small. The proofs are easy computation hence left to the reader.

	\subsection{Comparison of Geometric Sides}
	
	Recall the Godement section constructed from $\Phi \in \Sch(\A^2)$, absolutely convergent for $\Re s > 0$
	$$ f_{\Phi}(s,g) := \norm[\det g]_{\A}^{\frac{1}{2}+s} \int_{\A^{\times}} \Phi((0,t)g) \norm[t]_{\A}^{1+2s} d^{\times}t. $$
\begin{lemma}
	Let $\eis(s,\Phi)$ be the Eisenstein series formed from $f_{\Phi}(s, \cdot)$. Then its constant term is 
	$$ \eisCst(s,\Phi)(g) = f_{\Phi}(s,g) + f_{\widehat{\Phi}}(-s,g), $$
	$$ \text{with} \quad f_{\widehat{\Phi}}(-s,g) = \norm[\det g]_{\A}^{\frac{1}{2}+s} \int_{\A^{\times}} \OFour_2 \rpL_{t^{-1}} \rpR_g \Phi(1,0) \norm[t]_{\A}^{1+2s} d^{\times}t, $$
	where by abuse of notations we have denoted by $\rpL$ (and $\rpR$) the action
	$$ \rpL_t \rpR_g \Phi(x,y) := \Phi((t^{-1}x, t^{-1}y)g). $$
	We also have the difference, absolutely convergent for all $s \in \C$
	$$ \left( \eis(s,\Phi) - \eisCst(s,\Phi) \right)(g) = \norm[\det g]_{\A}^{\frac{1}{2}+s} \int_{\F^{\times} \backslash \A^{\times}} \left( \sum_{\alpha_1, \alpha_2 \in \F^{\times}} \OFour_2 \rpL_{t^{-1}} \rpR_g \Phi (\alpha_1, \alpha_2) \right) \norm[t]_{\A}^{1+2s} d^{\times}t. $$
\label{GSEisFourInv}
\end{lemma}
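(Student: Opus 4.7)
The plan is a Bruhat-type unfolding for both formulas, with a Poisson repackaging at the end to expose the $\OFour_2$ and secure absolute convergence for all $s \in \C$.

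\textbf{Constant term.} From the Bruhat decomposition $\gp{B}(\F)\backslash\GL_2(\F)=\{e\}\sqcup\{wn(\alpha):\alpha\in\F\}$, the identity coset contributes $f_\Phi(s,g)$ by the $\gp{N}(\A)$-invariance of $f_\Phi(s,\cdot)$, while the standard unfolding $\sum_{\alpha\in\F}\int_{\F\backslash\A}=\int_\A$ collapses the Bruhat-cell contribution to the constant term into $\int_\A f_\Phi(s,wn(u)g)\,du$. Expanding the Godement section and using $(0,t)wn(u)=(-t,-tu)$, I would interchange the integrals in $t$ and $u$ and substitute $v=-tu$ to recognize the inner integral as $\norm[t]_\A^{-1}\OFour_2\rpR_g\Phi(-t,0)$. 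A sign flip $t\mapsto -t$, combined with the identity $\OFour_2\rpL_{t^{-1}}\rpR_g\Phi(1,0)=\norm[t]_\A^{-1}\OFour_2\rpR_g\Phi(t,0)$, matches the displayed formula for $f_{\widehat\Phi}(-s,g)$.

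\textbf{Difference, Epstein rewrite.} The Bruhat computation leaves, for $\Re s\gg 1$,
\begin{equation*}
(\eis-\eisCst)(s,\Phi)(g) = f_\Phi(s,wg)+\sum_{\alpha\in\F^\times}f_\Phi(s,wn(\alpha)g)-f_{\widehat\Phi}(-s,g).
\end{equation*}
Unfolding each $\int_{\A^\times}=\sum_{\beta\in\F^\times}\int_{\F^\times\backslash\A^\times}$ in $f_\Phi$ and relabelling $(\alpha_1,\alpha_2):=(-\beta,-\beta\alpha)$ combines the first two terms into
\begin{equation*}
E_B(g):=\norm[\det g]_\A^{\frac{1}{2}+s}\int_{\F^\times\backslash\A^\times}\sum_{\alpha_1\in\F^\times,\,\alpha_2\in\F}\Phi((\alpha_1 t,\alpha_2 t)g)\norm[t]_\A^{1+2s}d^\times t,
\end{equation*}
so that $(\eis-\eisCst)(g)=E_B(g)-f_{\widehat\Phi}(-s,g)$, with the $\alpha_2=0$ contribution inside $E_B$ absorbing the boundary term $f_\Phi(s,wg)$.

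\textbf{Poisson repackaging.} I then apply Poisson summation on the second coordinate, $\sum_{\alpha_2\in\F}G(\alpha_1,\alpha_2)=\sum_{\alpha_2\in\F}\OFour_2 G(\alpha_1,\alpha_2)$ with $G=\rpL_{t^{-1}}\rpR_g\Phi$, splitting off the $\alpha_2=0$ term on the Fourier side. This rewrites the inner sum in $E_B$ as $\sum_{\alpha_1,\alpha_2\in\F^\times}\OFour_2 G(\alpha_1,\alpha_2)+\sum_{\alpha_1\in\F^\times}\OFour_2 G(\alpha_1,0)$. Unfolding $\int_{\F^\times\backslash\A^\times}\sum_{\alpha_1\in\F^\times}=\int_{\A^\times}$ on the latter, together with the identity $\OFour_2\rpL_{t^{-1}}\rpR_g\Phi(\alpha_1,0)=\norm[t]_\A^{-1}\OFour_2\rpR_g\Phi(t\alpha_1,0)$, reproduces exactly $f_{\widehat\Phi}(-s,g)$, cancelling the $-f_{\widehat\Phi}(-s,g)$ standing in $E_B-f_{\widehat\Phi}(-s,\cdot)$; what survives is the claimed $(\F^\times)^2$-integral with $\OFour_2$.

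\textbf{Main obstacle.} The principal technical point is absolute convergence of the final formula for all $s\in\C$. Using $\OFour_2\rpL_{t^{-1}}\rpR_g\Phi(\alpha_1,\alpha_2)=\norm[t]_\A^{-1}\OFour_2\rpR_g\Phi(t\alpha_1,\alpha_2/t)$, Schwartz decay of $\OFour_2\rpR_g\Phi$ forces rapid decrease in the parameter $\max_v(\norm[t\alpha_1]_v,\norm[\alpha_2/t]_v)$, which grows in either limit $\norm[t]_\A\to\infty$ (through $t\alpha_1$) or $\norm[t]_\A\to 0$ (through $\alpha_2/t$). Applying Proposition \ref{PoissonSEst} in each direction yields bounds of the form $\ll\min(\norm[t]_\A^{-N},\norm[t]_\A^N)$ after the $(\alpha_1,\alpha_2)$-summation, so the integrand decays rapidly at both ends of $\F^\times\backslash\A^\times$, justifying convergence for every $s$. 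The intermediate manipulations (Bruhat unfolding, interchange of sum and integral, Poisson summation) are valid for $\Re s\gg 1$ by the classical theory of the Tate zeta integral, and the final closed form then supplies the analytic continuation to all $s\in\C$.
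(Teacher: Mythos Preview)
Your argument is correct and is precisely the ``easy computation'' the paper leaves to the reader: Bruhat unfolding for the constant term, then rewriting the big-cell sum as an Epstein-type integral over $\F^\times\backslash\A^\times$ and applying Poisson summation in the second coordinate so that the $\alpha_2=0$ Fourier term cancels $f_{\widehat\Phi}(-s,g)$. The convergence argument via Proposition \ref{PoissonSEst} applied in each of the two $\F^\times$-sums is also the intended one.
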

\begin{corollary}
	Let $\Phi_1, \Phi_2 \in \Sch(\A^2)$ and define $\Psi \in \Sch(\Mat_2(\A))$ by
	$$ \Psi \begin{pmatrix} x_1 & x_2 \\ x_3 & x_4 \end{pmatrix} := \Phi_1(x_1,x_2) \Phi_2(x_3, x_4). $$
	Then for $\Re s \gg 1$, we have a decomposition of 
	$$ \tilde{I}(s,s_1,s_2) := \int_{\F^{\times} \backslash \A^{\times}} \left( \eis(s_1,\Phi_1) \cdot \eis(s_2,\Phi_2) - \eisCst(s_1,\Phi_1) \cdot \eisCst(s_2,\Phi_2) \right) (a(t)) \norm[t]_{\A}^s d^{\times}t $$
	as $\tilde{I}(s,s_1,s_2) = \sideset{}{_{j=0}^4} \sum \tilde{I}_j(s,s_1,s_2)$, where (recall the convention Remark \ref{SumConv}),
	$$ \tilde{I}_0(\cdot) = \int_{(\F^{\times} \backslash \A^{\times})^3} \Vsum \OFour_2 \OFour_4 \rpL_{d(t_1,t_2)^{-1}} \rpR_{a(t)} \Psi\begin{pmatrix} \xi_1 & \xi_2 \\ \xi_3 & \xi_4 \end{pmatrix} \norm[t_1]_{\A}^{1+2s_1} \norm[t_2]_{\A}^{1+2s_2} \norm[t]_{\A}^{1+s_1+s_2+s} d^{\times}t_1 d^{\times}t_2 d^{\times}t, $$
	$$ \tilde{I}_1(\cdot) = \int_{(\F^{\times} \backslash \A^{\times})^3} \Vsum \OFour_2 \rpL_{d(t_1,t_2)^{-1}} \rpR_{a(t)} \Psi\begin{pmatrix} \xi_1 & \xi_2 \\ 0 & \xi_4 \end{pmatrix} \norm[t_1]_{\A}^{1+2s_1} \norm[t_2]_{\A}^{1+2s_2} \norm[t]_{\A}^{1+s_1+s_2+s} d^{\times}t_1 d^{\times}t_2 d^{\times}t, $$
	$$ \tilde{I}_2(\cdot) = \int_{(\F^{\times} \backslash \A^{\times})^3} \Vsum \OFour_2 \OFour_4 \rpL_{d(t_1,t_2)^{-1}} \rpR_{a(t)} \Psi\begin{pmatrix} \xi_1 & \xi_2 \\ \xi_3 & 0 \end{pmatrix} \norm[t_1]_{\A}^{1+2s_1} \norm[t_2]_{\A}^{1+2s_2} \norm[t]_{\A}^{1+s_1+s_2+s} d^{\times}t_1 d^{\times}t_2 d^{\times}t, $$
	$$ \tilde{I}_3(\cdot) = \int_{(\F^{\times} \backslash \A^{\times})^3} \Vsum \OFour_4 \rpL_{d(t_1,t_2)^{-1}} \rpR_{a(t)} \Psi\begin{pmatrix} 0 & \xi_2 \\ \xi_3 & \xi_4 \end{pmatrix} \norm[t_1]_{\A}^{1+2s_1} \norm[t_2]_{\A}^{1+2s_2} \norm[t]_{\A}^{1+s_1+s_2+s} d^{\times}t_1 d^{\times}t_2 d^{\times}t, $$
	$$ \tilde{I}_4(\cdot) = \int_{(\F^{\times} \backslash \A^{\times})^3} \Vsum \OFour_2 \OFour_4 \rpL_{d(t_1,t_2)^{-1}} \rpR_{a(t)} \Psi\begin{pmatrix} \xi_1 & 0 \\ \xi_3 & \xi_4 \end{pmatrix} \norm[t_1]_{\A}^{1+2s_1} \norm[t_2]_{\A}^{1+2s_2} \norm[t]_{\A}^{1+s_1+s_2+s} d^{\times}t_1 d^{\times}t_2 d^{\times}t. $$
\end{corollary}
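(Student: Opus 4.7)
The plan is to decompose the integrand algebraically as
\[
\eis_1 \eis_2 - \eisCst_1 \eisCst_2 = (\eis_1 - \eisCst_1)(\eis_2 - \eisCst_2) + (\eis_1 - \eisCst_1)\eisCst_2 + \eisCst_1(\eis_2 - \eisCst_2),
\]
where I abbreviate $\eis_j := \eis(s_j,\Phi_j)$ and similarly for $\eisCst_j$. Into each of the three pieces I substitute the two formulas from Lemma \ref{GSEisFourInv}: the sum-integral representation of $\eis - \eisCst$, and the identity $\eisCst = f_{\Phi}(s,\cdot) + f_{\widehat{\Phi}}(-s,\cdot)$ for each constant-term factor. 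Splitting the last two pieces accordingly produces a total of five summands, which I expect to match the five $\tilde{I}_j$.

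For the diagonal piece $(\eis_1 - \eisCst_1)(\eis_2 - \eisCst_2)$: at $g=a(t)$, each difference is a Tate-type integral over $\F^{\times}\backslash\A^{\times}$ carrying a sum over $(\F^{\times})^2$. Under the tensor decomposition $\Psi = \Phi_1 \otimes \Phi_2$, the Fourier transform $\OFour_2$ acting on $\Phi_1$ (its second variable) corresponds to $\OFour_2$ on $\Psi$ (the variable $x_2$), while $\OFour_2$ on $\Phi_2$ corresponds to $\OFour_4$ on $\Psi$ (the variable $x_4$). Multiplying the two expressions and integrating the product against $\norm[t]_{\A}^s d^{\times}t$, with the two Tate-integration variables renamed $t_1,t_2$, yields $\tilde{I}_0$ with free $\xi_1,\dots,\xi_4 \in \F^{\times}$.

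For the mixed pieces I split $\eisCst_j = f_{\Phi_j}(s_j,\cdot) + f_{\widehat{\Phi_j}}(-s_j,\cdot)$ and treat the four resulting cross terms. In $(\eis_1 - \eisCst_1) f_{\Phi_2}(s_2,\cdot)$, the Tate integral defining $f_{\Phi_2}$ evaluates $\Phi_2$ only at $(0,\cdot)$ because $(0,v) a(t) = (0,v)$, producing the restriction $\xi_3=0$; no Fourier transform falls on $\Phi_2$, so only $\OFour_2\Psi$ appears, giving $\tilde{I}_1$. The piece $(\eis_1 - \eisCst_1) f_{\widehat{\Phi_2}}(-s_2,\cdot)$ carries $\OFour_2\Phi_2$ in its second coordinate-zero version, producing the restriction $\xi_4=0$ together with $\OFour_2\OFour_4\Psi$, i.e.\ $\tilde{I}_2$. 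Symmetrically, $f_{\Phi_1}(s_1,\cdot)(\eis_2 - \eisCst_2)$ gives $\tilde{I}_3$ (with $\xi_1=0$ and only $\OFour_4$), while $f_{\widehat{\Phi_1}}(-s_1,\cdot)(\eis_2 - \eisCst_2)$ gives $\tilde{I}_4$ (with $\xi_2=0$ and both Fourier transforms). In each case, unfolding $\int_{\A^{\times}} = \int_{\F^{\times}\backslash\A^{\times}} \sum_{\F^{\times}}$ is what introduces the sum over the ``free'' $\xi$'s.

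The main obstacle will be the careful bookkeeping: tracking where each $\OFour$ migrates under the translation operators $\rpL_{d(t_1,t_2)^{-1}} \rpR_{a(t)}$ and verifying that the weights $\norm[t_1]_{\A}^{1+2s_1}$, $\norm[t_2]_{\A}^{1+2s_2}$, $\norm[t]_{\A}^{1+s_1+s_2+s}$ emerge with the correct powers after the successive changes of variables (absorbing $\norm[t_1], \norm[t_2]$ factors coming from Fourier transform scaling and from Jacobians). Convergence is comparatively benign: Lemma \ref{GSEisFourInv} already supplies absolute convergence of $\eis - \eisCst$ for all $s_j$; Proposition \ref{PoissonSEst} controls the rapid decay of the inner $\F^{\times}$-sums for Schwartz-class data; and taking $\Re s$ sufficiently large (depending on $s_1,s_2$, e.g.\ within the range $\Re s_1, \Re s_2 - \Re s_1 > 1/2$ considered in this subsection) keeps each $\tilde{I}_j$ in its domain of absolute convergence. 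Fubini then legitimates every interchange of integration and summation, and the algebraic identity above translates term-by-term into the claimed five-fold decomposition.
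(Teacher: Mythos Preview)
Your proposal is correct and is precisely the intended argument: the paper leaves this corollary as an easy computation from Lemma \ref{GSEisFourInv}, and your algebraic splitting $\eis_1\eis_2 - \eisCst_1\eisCst_2 = (\eis_1-\eisCst_1)(\eis_2-\eisCst_2) + (\eis_1-\eisCst_1)\eisCst_2 + \eisCst_1(\eis_2-\eisCst_2)$ together with $\eisCst_j = f_{\Phi_j} + f_{\widehat{\Phi}_j}$ is exactly how one unpacks that lemma to obtain the five $\tilde{I}_j$. Your identification of which Fourier transforms and which vanishing coordinate $\xi_i=0$ arise in each cross term is accurate, and your remarks on the Jacobian bookkeeping and convergence (via Proposition \ref{PoissonSEst} and $\Re s \gg 1$) are the right justifications.
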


	We apply to $\tilde{I}_0(s,s_1,s_2)$ the method leading to the decomposition (\ref{II_1AC}), and will get four other degenerate terms. Together with $\tilde{I}_j(s,s_1,s_2)$ for $1 \leq j \leq 4$, these eight degenerate terms correspond precisely to our $DG_j$ for $1 \leq j \leq 8$ given in (\ref{II_11AC}) to (\ref{II_24CE}). The geometric side of the period approach does not contain terms corresponding to our $II_3$ \& $II_4$, which are moved to the spectral side.

	\subsection{Comparison of Spectral Sides}
	
	The relation of $I(s,s_1,s_2)$ or $\tilde{I}(s,s_1,s_2)$ to the third moment of $L$-functions for $\GL_2$ is given by the automorphic Fourier inversion of
	$$ \eis(s_1,\Phi_1) \cdot \eis(s_2,\Phi_2) - \widetilde{\Reis}, $$
where $\widetilde{\Reis} = \widetilde{\Reis}_1 + \widetilde{\Reis}_2$ and $\widetilde{\Reis}_j$ are Eisenstein series constructed from
	$$ f_{\Phi_1}(s_1, \cdot ) f_{\Phi_2}(s_2, \cdot), \quad \text{resp.} \quad f_{\widehat{\Phi}_1}(-s_1, \cdot) f_{\Phi_2}(s_2, \cdot). $$
	First, we notice that for $\Re s >  1/2$
	$$ \eis(s,\Phi)(g) = \sum_{\gamma \in \gp{B}(\F) \backslash \GL_2(\F)} f_{\Phi}(s, \gamma g) = \norm[\det g]_{\A}^{\frac{1}{2}+s} \int_{\F^{\times} \backslash \A^{\times}} \sum_{\vec{\alpha} \in \F^2 - \{ (0,0) \}} \Phi(t \vec{\alpha} g) \norm[t]_{\A}^{1+2s} d^{\times}t. $$
	Hence we get
	$$ \left( \eis(s_1,\Phi_1) \eis(s_2,\Phi_2) \right)(g) = \norm[\det g]_{\A}^{1+s_1+s_2} \int_{(\F^{\times} \backslash \A^{\times})^2} \sum_{\xi \in \Mat_2^*(\F)} \rpL_{d(t_1,t_2)^{-1}} \rpR_g \Psi(\xi) \norm[t_1]_{\A}^{1+2s_1} \norm[t_2]_{\A}^{1+2s_2} d^{\times}t_1 d^{\times}t_2, $$
	where $\Mat_2^*(\F) \subset \Mat_2(\F)$ is defined by
	$$ \Mat_2^*(\F) = \left\{ \xi = \begin{pmatrix} \vec{\xi}_1 \\ \vec{\xi}_2 \end{pmatrix} \in \Mat_2(\F) \ \middle| \ \vec{\xi}_j \neq (0,0), j=1,2 \right\}. $$
	By definition and the formulas in Lemma \ref{GSEisFourInv}, it is easy to deduce that $\widetilde{\Reis}_1$ and $\widetilde{\Reis}_2$ are given by
	$$ \norm[\det g]_{\A}^{1+s_1+s_2} \int_{(\F^{\times} \backslash \A^{\times})^2} \sum_{\gamma \in \gp{B}(\F) \backslash \GL_2(\F)} \Vsum \rpL_{d(t_1,t_2)^{-1}} \rpR_{\gamma g} \Psi \begin{pmatrix} 0 & \xi_2 \\ 0 & \xi_4 \end{pmatrix} \norm[t_1]_{\A}^{1+2s_1} \norm[t_2]_{\A}^{1+2s_2} d^{\times}t_1 d^{\times}t_2, $$
	$$ \norm[\det g]_{\A}^{1+s_1+s_2} \sum_{\gamma \in \gp{B}(\F) \backslash \GL_2(\F)}  \int_{(\F^{\times} \backslash \A^{\times})^2} \Vsum \OFour_2 \rpL_{d(t_1,t_2)^{-1}} \rpR_{\gamma g} \Psi \begin{pmatrix} \xi_1 & 0 \\ 0 & \xi_4 \end{pmatrix} \norm[t_1]_{\A}^{1+2s_1} \norm[t_2]_{\A}^{1+2s_2} d^{\times}t_1 d^{\times}t_2. $$
	It is easy to verify
	$$ \bigcup_{\xi_2, \xi_4 \in \F^{\times}} \begin{pmatrix} 0 & \xi_2 \\ 0 & \xi_4 \end{pmatrix} \GL_2(\F) = \Mat_2^*(\F) - \GL_2(\F). $$
	Hence we identify the following difference as an absolutely convergent integral
\begin{align*}
	&\quad \left( \eis(s_1,\Phi_1) \cdot \eis(s_2,\Phi_2) - \widetilde{\Reis}_1 \right)(g) \\
	&= \norm[\det g]_{\A}^{1+s_1+s_2} \int_{(\F^{\times} \backslash \A^{\times})^2} \sum_{\xi \in \GL_2(\F)} \rpL_{d(t_1,t_2)^{-1}} \rpR_g \Psi(\xi) \norm[t_1]_{\A}^{1+2s_1} \norm[t_2]_{\A}^{1+2s_2} d^{\times}t_1 d^{\times}t_2 \\
	&= \norm[\det g]_{\A}^{1+s_1+s_2} \int_{(\F^{\times} \backslash \A^{\times})^2} \sum_{\xi \in \GL_2(\F)} \rpL_{a(t_1)^{-1}} \rpR_{d(z,z)} \rpR_g \Psi(\xi) \norm[t_1]_{\A}^{1+2s_1} \norm[z]_{\A}^{2(1+s_1+s_2)} d^{\times}z d^{\times}t_1 \\
	&= \int_{\F^{\times} \backslash \A^{\times}} \Tree{\widetilde{KK}^{(2)}}{\mathbbm{1},2(s_1+s_2)}{a(t_1)}{g} \norm[t_1]_{\A}^{s_2-s_1} d^{\times}t_1
\end{align*}
	where $\Shoulder{KK^{(2)}}{x}{y}$ is defined in (\ref{KKDecomp}). In order to rewrite $\widetilde{\Reis}_2$, we notice that
	$$ \Vsum \OFour_2 \Psi \begin{pmatrix} \xi_1 & 0 \\ 0 & \xi_4 \end{pmatrix} = \Vsum \int_{\A} \Psi \begin{pmatrix} \xi_1 & u \\ 0 & \xi_4 \end{pmatrix} du = \int_{\F \backslash \A} \Vsum_{\xi_2 \in \F} \Psi \left( n(u) \begin{pmatrix} \xi_1 & \xi_2 \\ 0 & \xi_4 \end{pmatrix} \right) du, $$
	which implies readily
	$$ \sum_{\gamma \in \gp{B}(\F) \backslash \GL_2(\F)} \Vsum \OFour_2 \rpL_{d(t_1,t_2)^{-1}} \rpR_{\gamma g} \Psi \begin{pmatrix} \xi_1 & 0 \\ 0 & \xi_4 \end{pmatrix} = \int_{\F \backslash \A} \sum_{\xi \in \GL_2(\F)} \rpL_{n(u)} \rpL_{d(t_1,t_2)^{-1}} \rpR_g \Psi \left( \xi \right) du. $$
	It follows that
	$$ \left( \eis(s_1,\Phi_1) \cdot \eis(s_2,\Phi_2) - \widetilde{\Reis} \right)(g) = \int_{\F^{\times} \backslash \A^{\times}} \Tree{\widetilde{\Delta K}^{(2)}}{\mathbbm{1},2(s_1+s_2)}{a(t_1)}{g} \norm[t_1]_{\A}^{s_2-s_1} d^{\times}t_1, $$
	whose contribution to the regularized integral \cite[(10.6)]{Ne20}
\begin{align*}
	&\quad \int_{\F^{\times} \backslash \A^{\times}} \left( \left( \eis(s_1,\Phi_1) \cdot \eis(s_2,\Phi_2) - \widetilde{\Reis} \right) - \left( \eis(s_1,\Phi_1) \cdot \eis(s_2,\Phi_2) - \widetilde{\Reis} \right)_{\gp{N}} \right)(a(y)) \norm[y]_{\A}^s d^{\times}y \\
	&= \int_{(\F^{\times} \backslash \A^{\times})^2} \Tree{\widetilde{\Delta \Delta}^{(2)}}{\mathbbm{1},2(s_1+s_2)}{a(t_1)}{a(t_2)} \norm[t_1]_{\A}^{s_2-s_1} \norm[t_2]_{\A}^s d^{\times}t_1 d^{\times}t_2
\end{align*}
	is the essential part of the main Motohashi distribution we considered in Proposition \ref{GoalMotoDis}.

\section*{Acknowledgement}
	
	The author would like to thank Olga Balkanova, Dmitry Frolenkov, Peter Humphries, Paul Nelson, Zhi Qi for discussions, Valentin Blomer for some complement on the history of (inverting) Motohashi's formula. The author would also like to give special thanks to Prof. Binyong Sun and Lei Zhang for useful comments on an earlier version. The preparation of this paper scatters during the author's stays at Alfr\'ed R\'enyi Institute in Hungary supported by the MTA R\'enyi Int\'ezet Lend\"ulet Automorphic Research Group and in the School of Mathematical Sciences at Queen Mary University of London. The author would like to thank both institutes for their hospitality and the support of the Leverhulme Trust Research Project Grant RPG-2018-401. Last but not least, the author thank two anonymous referees for their suggestions, which improve the readability and rigor of the paper.

\bibliographystyle{acm}

\bibliography{mathbib}

\address{\quad \\ Han WU \\ School of Mathematical Sciences \\ Queen Mary University of London \\ Mile End Road \\ E1 4NS, London \\ United Kingdom \\ wuhan1121@yahoo.com}

\end{document}